\documentclass[10pt,oneside,a4paper]{amsart}

\usepackage{BoxOperads_Preamble}


\title{Box operads and higher Gerstenhaber brackets}	

\author{Hoang Dinh Van}
\address[Hoang Dinh Van]{Faculty of Applied Sciences, University of Technology and Education, 1 Vo Van Ngan, Thu Duc, Ho Chi Minh city, Vietnam.}
\email{hoangdv@hcmute.edu.vn}

\author{Lander Hermans}
\address[Lander Hermans]{Universiteit Antwerpen, Departement Wiskunde, Middelheimcampus,
	Middelheimlaan 1,
	2020 Antwerp, Belgium}
\email{lander.hermans@uantwerpen.be}

\author{Wendy Lowen} 
\address[Wendy Lowen]{Universiteit Antwerpen, Departement Wiskunde, Middelheimcampus,
Middelheimlaan 1,
2020 Antwerp, Belgium}
\email{wendy.lowen@uantwerpen.be}

\thanks{This project has received funding from the European Research Council (ERC) under the European Union's Horizon 2020 research and innovation programme (grant agreement No. 817762). The second named author is a predoctoral fellow of the Research Foundation - Flanders (FWO),
file number 1194422N. \\
MSC2020: 18F20, 18M60\\
Keywords: prestack, Hochschild cohomology, Gerstenhaber-Schack complex, operad, brace operations, $\Linf$-algebra}

\newcommand{\Hor}{\mathbb{H}}

\begin{document}

\maketitle

\begin{abstract}
We introduce a symmetric operad $\boxop$ (``\emph{box-op}") which describes a certain calculus of rectangular labeled ``boxes''. Algebras over $\boxop$, which we call \emph{box operads}, have appeared under the name of fc multicategories in work by Leinster \cite{leinsterFc}. In our main result, we endow a suitable (graded, zero differential) totalisation $\boxop_{\mathrm{td}}$ with a morphism $\Linf \rightarrow \boxop_{\mathrm{td}}$. We show that $\boxop$ acts on an $\N^3$-graded enlargement of the $\N^2$-graded Gerstenhaber-Schack object $\CC_{GS}(\A)$ of a quiver $\A$ on a small category from \cite{DVL}. This action restricts to an $\Linf$-structure on $\CC_{GS}(\A)$ (with zero differential). For an element $\alpha = (m,f,c) \in \CC_{GS}^2(\A)$, the Maurer-Cartan equation holds precisely when $(\A, m, f, c)$ is a lax prestack with multiplications $m$, restrictions $f$, and twists $c$. As a consequence, the $\alpha$-twisted $\Linf$-structure on $\CC_{GS}(\A)$ controls the deformation theory of $(\A, \alpha)$ as a lax prestack.
\end{abstract}

\tableofcontents

\section{Introduction}

In the seminal work \cite{gerstenhabervoronov}, Gerstenhaber and Voronov put forth a very insightful approach to the higher structure on the Hochschild complex $\CC(A)$ of an algebra $A$. The main goal of the present paper is to initiate a similar approach to the higher structure on the Gerstenhaber-Schack complex of a prestack. In particular, the results in this paper include a complete and self contained treatment of the algebraic deformation theory of prestacks.

The Gerstenhaber-Schack complex was originally introduced and studied for presheaves of algebras by Gerstenhaber and Schack in the 1980ies motivated by the HKR decomposition and applications to schemes \cite{gerstenhaberschack} \cite{gerstenhaberschack1}, \cite{gerstenhaberschack2}. Meanwhile, their cohomology comparison theorem was generalised to prestacks by Lowen and Van den Bergh \cite{lowenvandenberghCCT}, and the ``noncommutative spaces'' in the sense of Van den Bergh, which can be described as noncommutative deformations of schemes, have become part of the general picture of homological mirror symmetry as proposed by Kontsevich in his 1994 ICM address \cite{artintatevandenbergh1990} \cite{vandenbergh2} \cite{lowenvandenberghhoch} \cite{aurouxorlov2008} \cite{kontsevich2}.

In contrast, the higher structure of the complex has remained elusive for much longer. The first positive results were obtained in special cases, for instance in \cite{FMY09}, Fr\'egier, Markl and Yau defined an $\Linf$-structure for morphisms of algebras.
In \cite{DVL}, Dinh Van and Lowen established a general approach to higher structure through homotopy transfer from the Hochschild complex of the Grothendieck construction of a prestack, building on the work of Gerstenhaber and Schack for presheaves over finite posets. However, in practice the transfer of the relevant structure is technically challenging and not necessarily illuminating. In concrete geometric cases, more direct approaches were devised, for instance by Barmeier and Fr\'egier in \cite{barmeierfregier}. The first direct operadic approach to an $\Linf$-structure for general presheaves is due to Hawkins \cite{hawkins}. This was further refined by the authors of the present paper in order to include twists into the picture \cite{vanhermanslowen2022}.

A serious drawback of these approaches is the way in which the algebraic structure of a prestack, consisting of multiplications $m$, restrictions $f$ and twists $c$, is being pulled apart in successive steps.
Morally, these components belong together in a single degree $2$ element $\alpha = (m,f,c)$ of the Gerstenhaber-Schack complex.
To remedy this, in the present paper we put forward an approach in which $\alpha$ only enters the picture in a later stage, in line with \cite{gerstenhabervoronov}.

Let us start by recalling Gerstenhaber and Voronov's approach to the algebra case from \cite{gerstenhabervoronov}.
It will be convenient to work on the underlying level of symmetric operads.  It is well known that nonsymmetric operads can be described as the algebras over an $\N$-coloured symmetric operad $\mathsf{Op}$ \cite{vanderlaan2004} \cite{hawkins}, and the first step in \cite{gerstenhabervoronov} can be rephrased in terms of the existence of a map $\mathsf{Brace} \rightarrow \Op$, where $\mathsf{Brace}$ is the symmetric operad describing brace algebras. As part of this structure, we have the prolongation
\begin{equation}\label{eqprelie}
\mathsf{PreLie} \rightarrow \ \mathsf{Brace} \rightarrow \Op
\end{equation}
which identifies a \emph{Gerstenhaber brace} on a nonsymmetric operad $\ooo$, given by
$$\phi \bullet \psi = \sum (-1)^{\epsilon}\phi(1 \otimes \psi \otimes 1),$$
for which the commutator is the Gerstenhaber Lie bracket.

Now consider an element $\alpha \in \ooo(2)$. Putting $\MC(\phi) = \phi \bullet \phi$, the Maurer-Cartan equation $$\alpha(\alpha\otimes 1) - \alpha(1 \otimes \alpha) = \MC(\alpha) = 0$$ expresses associativity of $\alpha$.
Further, the induced zero differential dgla structure can be twisted by $\alpha$ yielding a differential $d_{\alpha}(\phi) = [\alpha, \phi]$ on $\Tot(s^{-1}\ooo)$. This twisted dgla governs the deformation theory of $(\ooo, \alpha)$ through $\MC_{\alpha}(\phi) = d_{\alpha}(\phi) + \phi \bullet \phi$, a statement which takes its most familiar form for the endomorphism operad $\ooo = \CC(A)$ of a vector space $A$. The multiplication $\alpha$ also induces a dga structure on $\ooo$ through the cup product $\alpha\{\phi,\psi\}$. Taking the brace and the dga structure together, one obtains what is called a homotopy $G$-algebra in \cite{gerstenhabervoronov} (or more generally a $\Binf$-algebra \cite{getzlerjones}). This structure is a stepping stone towards a proof of the Deligne conjecture \cite{tamarkin1998}, \cite{mccluresmith2002}, \cite{bataninberger2009}. Meanwhile, the existence of a homotopy $G$-algebra structure has been proven on various Hochschild type complexes of algebro-geometric relevance, like the Hochschild complex of a dg category \cite{keller2018} and, more recently, the Tate-Hochschild complex computing singular Hochschild cohomology \cite{wang2021} (see \cite{keller2003}, \cite{chenliwang2021} for the relation between the two).

Next, we outline our approach to the case of prestacks. In analogy with the algebra case, we start with a \emph{$k$-quiver $\A$ on $\uuu$}, which corresponds to the underlying hom spaces and maps on objects of a candidate prestack on $\uuu$, but without the algebraic structure of multiplications, restrictions and twists. In a first step, we enlarge the associated $\N^2$-graded Gerstenhaber-Schack object $\CC^{p, q}_{GS}(\A)$ from \cite{DVL} to an $\N^3$-graded object $\CC_{\square}^{p,q,r}(\A)$ (see \S \ref{subparGS}).

In \S \ref{parboxop}, we introduce an $\N^3$-coloured symmetric operad $\boxop$ which is seen to act on $\CC^{p,q,r}_{\square}(\A)$ from its definition in terms of generators and relations. More precisely, colours $(p,q,r)$ are pictured as labeled boxes
$$ \scalebox{0.7}{$\tikzfig{fcRectangle}$} $$
and the generators of $\boxop$ are depicted as stackings of labeled boxes
$$ \scalebox{0.7}{$\tikzfig{2Level_stack}$}   $$
Such a generator acts on $\CC^{p,q,r}_{\square}(\A)$ by filling the boxes with linear maps of the corresponding colour, and then composing:
$$ \scalebox{0.7}{$\tikzfig{boxoperadGS_simple}$}$$

Algebras over $\boxop$ will be called \emph{box operads}. They are an instance of multicategories over a monad in the sense of Burroni \cite{burroni1971}, and have been considered by Leinster under the name of fc multicategories \cite{leinsterFc} \cite{Leinster2004}. They have also been called virtual double categories, see for instance \cite{CruttwellSchulman2010}, \cite{Koudenburg2020}.

Our main result is the existence of a canonical $\Linf$-structure on a (suitably totalised) linear box operad in \S \ref{parbrackets}. To this end, we define a morphism of symmetric dg operads
\begin{equation}\label{eqmain}
\Linf \rightarrow \boxop_{\mathrm{td}}
\end{equation}
for
a (graded, zero differential) totalisation $\boxop_{\mathrm{td}}$ of $k\boxop$ (Theorem \ref{Linf_theorem}).

In the final section \S \ref{parGS}, we obtain the desired application to prestacks by showing that the Gerstenhaber-Schack object $\Tot (s^{-1}\CC_{GS}(\A))$ is an $\Linf$-subalgebra of the totalised linear box operad $\Tot (s^{-1}_{\thin}\CC_{\square}(\A))$ (Proposition \ref{propGSLinf}).
We further show that a Maurer-Cartan element $\alpha = (m,f,c) \in \Tot (s^{-1}\CC_{GS}(\A))$ corresponds precisely to a lax prestack structure $(m,f,c)$ on $\A$. As a consequence, using the general machinery of twisting \cite{merkulov2000} \cite{dotsenko_shadrin_vallette_2023} (over $k$ a field of characteristic $0$), we obtain that the deformation theory of a (lax) prestack $(\A, \alpha)$ is governed by the $\alpha$-twisted $\Linf$-algebra $\Tot_{\alpha} (s^{-1}\CC_{GS}(\A))$.

The definition of the $\Linf$ components in $\boxop_{\mathrm{td}}$ from \eqref{eqmain} makes fundamental use of a combinatorial description of $\boxop$ in terms of stackings of boxes which is established in \S \ref{parboxop}.
This makes use of two underlying structures of a stacking $S$:  the \emph{vertical composite tree} $\mathbb{V}_S$ and the \emph{horizontal composite graph} $\mathbb{H}_S$, for instance 
$$S = \quad \scalebox{0.85}{$\tikzfig{example_stacking_simple}$} \quad ,\quad  \V_S = \quad \scalebox{0.85}{$\tikzfig{example_tree_simple}$}  \quad \text{ and } \quad  \Hh_S = \quad \scalebox{0.85}{$\tikzfig{example_graph_simple}$} .$$
 These two latter structures have the boxes of the stacking as vertices, the former being built from vertical adjacencies and the latter from horizontal ones. Correspondingly, in Theorem \ref{thmboxopSUBoppro} we give a description of $\boxop$ as a suboperad of $\Op \times \Pro$, where $\Pro$ is the $\N^2$-coloured symmetric operad encoding pros or asymmetric props.

We end this introduction by discussing some work in progress.

In analogy with the algebra case in \cite{gerstenhabervoronov}, one can go on to consider box operads with multiplication. The $\Linf$-structure from \eqref{eqmain} has an underlying $\Linf$-admissible structure taking the form of higher Gerstenhaber braces. In \cite{hermanslowenhigherstructure}, we extend this definition to the structure of a $\Ginf$-algebra in the sense of \cite{voronov}. Note that such a structure is known to exist on the Gerstenhaber-Schack complex of a prestack by homotopy transfer \cite{DVL} \cite{bergermoerdijk2003} \cite{getzlerjones}. The comparison of the various descriptions of $\Linf$-structures currently known to exist on this complex is also relegated to \cite{hermanslowenhigherstructure}.
Further, in future work we will extend our approach to double categories, $n$-fold categories and higher dimensional analogues of box operads. Recently, it was pointed out to us by Ralph Kaufmann that concepts in \cite{kaufmannmonaco2022} may be related to box operads, opening a potential avenue for further generalisations.

Finally, the theory of operads provides a general and powerful approach to deformation theory via the definition of operadic deformation complexes \cite{markl2010}. This approach will be investigated in the setup of box operads in \cite{hermanskoszulduality}, including the development of Koszul duality and the construction of cofibrant models.

\medskip 

\noindent \emph{Acknowledgement.} 

This manuscript was completed during a research visit of the second named author at Paris 13. The second named author thanks Bruno Vallette for the hospitality during this visit, and for numerous valuable discussions and explanations, in particular on the topics of homotopy transfer and twisting.

The authors are grateful to the organisers of the Nouveau S\'eminaire ALPE in March 2023 at IMAG, Universit\'e de Montpellier, of the Mid-term workshop of the ANR HighAGT at IRMA, Universit\'e de Strasbourgh in May-June 2023, and of the 2023 Thematic trimester program ``Higher Structures in Geometry and Mathematical Physics'' at IHP, Paris, for the various opportunities to present and discuss this work.

\medskip

\noindent \emph{Conventions.}

We work over a commutative ground ring $k$.

We will consider both ``symmetric'' and ``nonsymmetric'' operads in this paper. While we will typically add these adjectives when an operad is first introduced, we will omit them most of the time.
Similarly, several of the operads we use will be ``coloured'' operads, but we will not always stress this.

We use interchangeably the two equivalent compositional structures $- \circ_i -$ and $- \circ (-,\ldots,-)$ for operads. Furthermore, we write $x \circ (- ,x_2,\ldots,x_n)$ for the corresponding composition where the second input variable is left out.

For $n\in \N$, put $[n] := \{0,\ldots,n\}$ and $\lh n \rh := \{ 1,\ldots,n \}$.

\section{Box operads}\label{parboxop}

In this section, we introduce and study the $\N^3$-coloured symmetric operad $\boxop$ (pronounced as ``box-op''), which models natural ways of stacking $(p,q,r)$-labelled boxes:

\begin{equation}\label{genx}
\scalebox{0.7}{$\tikzfig{2Level_stack}$}
\end{equation}

This operad is seen to act on an $\N^3$-graded enlargement of the $\N^2$-graded Gerstenhaber-Schack object of a prestack from \cite{DVL}, as will be detailed in \S \ref{parGS}. Algebras over $\boxop$ will be called \emph{box operads} and have appeared in the literature under different names.
More precisely, box operads are an instance of multicategories over a monad in the sense of Burroni \cite{burroni1971} and have been considered by Leinster under the name of fc multicategories \cite{leinsterFc, Leinster2004}. They have also been called virtual double categories, see for instance \cite{CruttwellSchulman2010}, \cite{Koudenburg2020}.

The operad $\boxop$ can be easily defined in terms of generators \eqref{genx} and associativity relations (Definition \ref{defgenFc}), and the main goal of this section is to provide a combinatorial description in terms of more general stackings of boxes (Theorem \ref{ThmBoxopStackings}). This description makes use of two underlying structures of a stacking:  the \emph{vertical composite tree} and the \emph{horizontal composite graph}. Both these structures have the boxes of the stacking as vertices, the former being built from vertical adjacencies and the latter from horizontal ones (see pictures in Example \ref{exTreeGraph}).

Before investigating $\boxop$, we first recall the operads capturing these two underlying structures. First, in \S \ref{subparop}, we recall the symmetric $\N$-couloured operad $\Op$ whose algebras are nonsymmetric operads, and whose combinatorial elements are planar trees \cite{vanderlaan2004}. Secondly, in \S \ref{subparpro}, we recall the symmetric $\N^2$-coloured operad $\Pro$ whose algebras are pros or asymmetric props, and whose combinatorial elements are planar graphs \cite{yauJohnson2015}.

In \S \ref{subparboxop} we define $\boxop$ in terms of generators and relations, as well as two morphisms of (coloured) symmetric operads with the following prescriptions on colours:
\begin{equation}\label{V}
        \V : \boxop \rightarrow \Op: (p,q,r) \mapsto q
\end{equation}

\begin{equation}\label{H}
        \Hor : \boxop \rightarrow \Pro: (p,q,r) \mapsto (p,r)
\end{equation}
Finally, in \ref{subparstack} we give a combinatorial description of $\boxop$ and realise it as a suboperad of $\Op \times \Pro$. Based upon this description, the higher morphisms
$\V$ and $\Hor$ correspond precisely to taking the vertical composite tree, respectively the horizontal composite graph, of a stacking of boxes.

\medskip







\subsection{The operad $\Op$}\label{subparop}

The $\N$-coloured symmetric operad $\Op$ is generated by a unit $\eta \in \Op\left( ; 1 \right)$ and, for every $q_1,\ldots,q_n\in \N$, a generator
$$C_{\nth{q}} \in \Op (n,q_1,\ldots,q_n; \sum_{i=1}^n q_i)$$
For matching indices, we require them to satisfy the relations
\begin{enumerate}
\item $ 
 C_{\underline{q}^1,\ldots,\underline{q}^n} \circ_1 C_{\underline{m}} = \left(  C_{\sum_{j=1}^{m_1} q^1_j,\ldots, \sum_{j=1}^{m_n} q^n_j}  \circ (- ,C_{\underline{q}^1},\ldots,C_{\underline{q}^n})\right)^{\si}$ where $\si$ is the usual permutation shuffling 
\begin{equation}\label{shuffleOp}
(x;(x_1;x^1_1,\ldots,x^1_{m_1}),\ldots,(x_n;x^n_1,\ldots,x^n_{m_n}) ) \leadsto ((x;x_1,\ldots,x_n);x^1_1,\ldots,x^n_{m_n}),
\end{equation}
\item $C_{1,\ldots,1} \circ (-,\eta,\ldots,\eta) = 1_n = C_1 \circ_1 \eta$.
\end{enumerate}
where $1_n \in \Op(n;n)$ is the operadic unit. We often omit the indices and simply write $C_n$. 

An $\Op$-algebra corresponds to an operad.

\subsubsection{The operad of trees with unit}\label{subsubpartrees}
We provide a generator-free description of $\Op$. Following the presentation from \cite[\S 2.1]{hawkins}, a (planar) tree $T$ consists of a quadruple $T = (V,E,\tre,<)$ where $V$ is a set of vertices $V$, $E \sub V \times V$ is a non-empty set of edges and $\tre,<$ are two partial orders on $V$ which we interpret as 
\begin{align*}
below\; &<\; above\\
left \; &\tre \; right
\end{align*}
For vertices $u,v \in V$, if $(u,v) \in E_T$, we call \emph{$u$ the parent of $v$} and \emph{$v$ the child of $u$}. If a vertex has a child, we call it \emph{internal}, and otherwise a \emph{leaf}. A tree $T$ induces a \emph{total order} $\downarrow$ on the set of vertices $V$: $i \downarrow j $ if $i$ is left of $j$ or $i$ is below $j$.

For $\nth{q},q \in \N$, let $\Tree(\nth{q};q)$ be the set of trees $T$ on vertex set $\lh n \rh$ \emph{full with respect to $\nth{q},q$}, that is, such that each internal vertex $i$ has exactly $q_i$ children and $q = \sum_{v \text{ leaf}} q_v$. Note that for such trees holds $q-1= \sum_{i=1}^n (q_i-1)$. $T$ is in \emph{standard order} if the induced total order $\downarrow$ on the vertices agrees with the natural order of $\lh n \rh$.

The generators $C_{\nth{q}}$ of $\Op$ correspond to the $n$-corollas, that is, the trees
$$ \scalebox{0.75}{$\tikzfig{corolla_tree_full}$} $$ 
Full trees form an $\N$-coloured operad where composition is given by substitution of trees in vertices. The symmetric action is given by permuting the vertices. In particular, it corresponds to the suboperad of $\Op$ generated by the $n$-corollas $C_n$.

The operad $\Op$ then corresponds to adding the unit $\eta\in \Op(;1)$ as a formal element to the operad of full trees with the following relations
\begin{enumerate}
\item let $i$ be an internal vertex of a full tree $T$ with exactly one child $j$ and let $T'$ be the full tree obtained from $T'$ by contracting the subtree spanned by vertices $i$ and $j$, then we have $T \circ_i \eta = T'$.
\item let $j_1 \tre \ldots \tre j_{q_i}$ be the set of children of an internal vertex $i$ in full tree $T$ with corresponding number of inputs $q_{j_s}$. Assume for simplicity $j_1 < \ldots < j_{q_i}$ as numbers. Let $T'$ be the full tree obtained from $T$ by contracting the subtree spanned by the vertices $i,j_1,\ldots,j_{q_i}$, then we have 
$T \circ_{j_{q_i}} \eta \circ_{j_{q_i-1}} \ldots \circ_{j_1} \eta =T'$. 
\end{enumerate}

\subsection{The operad $\Pro$}\label{subparpro}

Let $\Pro$ be the $\N^2$-coloured symmetric operad generated by the elements
\begin{itemize}[itemsep=4mm]
\item $M_{p,r,t} \in \Pro\left( (p,r), (r,t); (p,t) \right)$ for every $p,r,t\in \N$,
\item $P_{p,r}^{p',r'} \in \Pro\left( (p,r), (p',r'); (p+p', r+r') \right)$ for every $p,p',r,r' \in \N$,
\item $\eta_P \in \Pro\left( ;(0,0)\right)$ and $\eta_M \in \Pro\left( ; (1,1) \right)$
\end{itemize}
satisfying the relations
\begin{enumerate}[leftmargin=5.5mm, itemsep=4mm]
\item $M_{p,t,s} \circ_1 M_{p,r,t} = M_{p,r,s} \circ_2 M_{r,t,s}$,
\item $P^{p'',r''}_{p+p',r+r'} \circ_1 P^{p',r'}_{p,r} = P^{p'+p'', r'+r''}_{p,r} \circ_2 P^{p'',r''}_{p',r'}$,
\item $M_{p+p',r+r',t+t'} \circ_2 P_{r,t}^{r',t'} \circ_1 P_{p,r}^{p',r'} = \left( P_{p,t}^{p',t'} \circ_2 M_{p',r',t'} \circ_1 M_{p,r,t} \right)^{(23)}$,
\item $P_{p,r}^{0,0} \circ_2 \eta_P = 1_{p,r} = P_{0,0}^{p,r} \circ_1 \eta_P$,
\item $M_{p,r,r} \circ_2  \eta_M^r = 1_{p,r} = M_{p,p,r} \circ_1 \eta_M^p$
\end{enumerate}
where $1_{p,r} \in \Pro((p,r);(p,r))$ is the operadic unit and $ \eta_M^r := ( P_{r-1,r-1}^{1,1} \circ_1 \ldots \circ_1 P_{1,1}^{1,1} ) \circ ( \eta_M,\ldots,\eta_M) $.

A $\Pro$-algebra is a pro or an asymmetric prop.
\subsubsection{The operad of planar graphs}
We provide a generator-free description of $\Pro$. Let $\lh n \rh$ denote the set $\{1,\ldots,n\}$. Note $\lh 0 \rh $ is the empty set. A (directed, numbered) graph $G= (\lh n \rh, E,f)$ with respect to the pairs of natural numbers $(p_1,r_1),\ldots,(p_n,r_n);(p,r)$, consists of 
\begin{itemize}
\item the set of vertices $V:= \lh n \rh$,
\item for every $a,b \in V$ a collection of partial functions $E(a,b) \sub \lh r_b \rh \times \lh p_a \rh$ connecting outputs of $b$ with inputs of $a$,
\item two injective functions $f_{in}: \lh p \rh \longrightarrow \coprod_{a \in V} \lh p_a \rh \coprod \lh r \rh$ and $f_{out}: \lh r \rh \longrightarrow \coprod_{a\in V} \lh r_a \rh \coprod \lh p \rh$ numbering the inputs and outputs respectively of the graph $G$ as a whole,
\end{itemize}
such that 
\begin{enumerate}
\item each input $i \in \lh p_a \rh$ is either \emph{plugged} by a unique output $j\in \lh r_b \rh$, that is, $(j,i) \in E(a,b)$, or is an \emph{input of $G$}, that is, $(i \in a) \in \Image(f_{in})$,
\item each output $j \in \lh r_b \rh$ is either \emph{plugging} a unique input $i\in \lh p_a \rh$, that is, $(j,i) \in E(a,b)$, or is an \emph{output of $G$}, that is, $(j \in b) \in \Image(f_{out})$,
\item if an input $i$ of $G$ is connected to an output $j$ of $G$, i.e. $f_{in}(i) = (j \in \lh r \rh)$, the output $j$ of $G$ is connected to the input $i$ of $G$, i.e. $f_{out}(j) = (i \in \lh p \rh)$, and vice versa. 
\end{enumerate}
Note that for its inputs and outputs holds $ r-p = \sum_{a \in V} (r_a-p_a)$. 

We embed a graph $G$ in the plane as follows: we draw $G$ directed from left (inputs) to right (outputs) and number both inputs and outputs from top to bottom, as follows
\begin{itemize}
\item for every vertex $a$ we draw a disk labelled $a$ with $p_a$ number of inputs and $r_a$ number of outputs
$$\tikzfig{spider}$$
\item for every element $(j,i) \in E(a,b)$, we draw an edge from the output $j$ of vertex $b$ to the input $i$ of vertex $a$.
$$\tikzfig{spider_edge}$$
\item we vertically draw left of the graph inputs $1$ (top) to $p$ (bottom) and draw an edge from $i \in \lh p \rh$ to the input $f_{in}(i) = j \in \lh p_a \rh$ of vertex $a$,
\item we vertically draw right of the graph outputs $1$ (top) to $p$ (bottom) and draw an edge from the output $f_{out}(i) \in  \lh r_a \rh$ of vertex $a$ to the  output $i \in \lh r \rh$ of the graph.
\item if $f_{in}(i) = j \in \lh r \rh$ and $f_{out}(j) = i \in \lh p \rh$, then we draw an edge from input $i$ to output $j$.
\end{itemize}

A graph $G$ is \emph{planar} if its embedding in the plane described above is possible without crossing edges.

The generators $M_{p,r,t}$ and $P_{p,r}^{p',r'}$ then correspond to the following respective graphs
\begin{equation*}\label{drawings_generators_pro}  \scalebox{0.85}{$\tikzfig{Cprt}$} \quad \text{ and } \quad   \quad \scalebox{0.85}{$\tikzfig{Box}$}\quad.
\end{equation*}
The units $\eta_P$ and $\eta_M$ correspond respectively to the unique graphs without vertices with respectively exactly $0$ an $1$ input and output.

The operad $\Pro$ corresponds to the $\N^3$-coloured operad spanned by the planar graphs above whose composition is given by substitution of a planar graph in a vertex of a planar graph. The $\Ss$-action is given by permuting the vertices of the graph.

\subsection{The operad $\boxop$}\label{subparboxop}


%
We introduce our main operad of interest.

\begin{mydef}\label{defgenFc}
Let $\boxop$ be the $\N^3$-coloured operad generated by  
\begin{itemize}
\item for every $n,\nth{q}, p_0,\ldots,p_n,p ,r \in \N$ an element
$$ C^{p_0,\ldots,p_n;p,r}_{q_1,\ldots,q_n} \in \boxop\left( (p,n-1,r),(p_0,q_1,p_1),\ldots,(p_{n-1},q_{n},p_n); (p+p_0,\sum_{i=1}^{n}q_i,r+p_n)\right)$$
\item a unit $\eta \in \boxop\left( ; \left(0,1,0\right) \right)$
\end{itemize}
For \emph{matching} colours
$$(p,n,r),(p_0,m_1,p_1),\ldots,(p_{n-1},m_n,p_n),(p^1_0,q^1_1,p^1_1),\ldots,(p^1_{q_1-1},q^1_{m_1},p^1_{m_n}),\ldots,(p^n_0,q^n_1,p ^n_1),\ldots, (p^n_{m_n-1},q^n_{m_n},p^n_{m_n}),$$
that is, such that for $0 < i < n$ holds
$$p^i_{m_i} = p^{i+1}_0,$$
we require associativity and unit relations
\begin{enumerate}
\item $ C_{\underline{q}^1,\ldots,\underline{q}^n}^{\underline{p}^1,\underline{\hat{p}}^2,\ldots,\underline{\hat{p}}^n} \circ C_{\underline{m}}^{\underline{p};p,r} = \left( C^{p_0+p^1_0,\ldots,p_{n-1}+p^{n}_0,p_n+p^n_{m_n};p,r}_{\sum_{j=1}^{m_1} q^1_j,\ldots, \sum_{j=1}^{m_n} q^n_j} \circ ( - , C_{\underline{q}^1}^{\underline{p}^1;p_0,p_1} , \ldots, C_{\underline{q}^n}^{\underline{p}^n;p_{n-1},p_n}) \right)^{\si} $ \label{eqboxopAssoc}
\end{enumerate}
where $\si$ is the usual shuffle \eqref{shuffleOp} and $\underline{\hat{p}}^i := p^i_1,\ldots,p^i_{m_i}$ the (possibly empty) sequence omitting the first element, 
\begin{enumerate}[resume]
\item $C_n^{p,r;0,0} \circ_1 \eta = 1 =  C_{1,\ldots,1}^{0,\ldots,0;p,r} \circ (-,\eta,\ldots,\eta) $,
\end{enumerate}
where $1$ is the operadic identity.
\end{mydef}

An $\boxop$-algebra is a called a \emph{box operad}. Let us spell this out. 


\begin{mydef}\label{defboxoperad}
A \emph{box operad} $\bbb$ consists of a collection of $k$-modules $(\bbb(p,q,r))_{p,q,r\in \N}$, composition maps
$$\mu: \bbb(p,n,r) \otimes \bigotimes_{i=1}^n\bbb(p_{i-1},q_i,p_{i}) \longrightarrow \bbb(p+ p_0, \sum_{i=1}^n q_i, r + p_{n}),$$
and a unit
$$\eta: k \longrightarrow \bbb(0,1,0),$$ 
satisfying associativity and unit axioms: for $x,x_1,\ldots,x_n, x_1^1,\ldots,x^n_{k_n}\in \bbb$ in \emph{matching} arities (see Definition \ref{defgenFc}), we have
\begin{enumerate}
\item \label{boxop_assoc} $\mu( x; \mu(x_1; \ofromto{x^1}{q_1}) \otimes \ldots \otimes \mu(x_n; \ofromto{x^n}{q_n})) = \mu(\mu(x;x_1 \otimes \ldots \otimes x_n); x_1^1 \otimes \ldots \otimes x_{q_n}^n)$,
\item $\mu(x; 1 \otimes \ldots \otimes 1) = x = \mu(1;x)$,
\end{enumerate}
\end{mydef}

For pictorial representations, we refer to $\S \ref{subparstack}$ (notably pictures \eqref{drawingGeneratorsBoxop} and \eqref{drawingAssocStackings}) in which a combinatorial description of $\square \mathsf{p}$ is given.


%
%
%

\subsubsection{The morphism $\V: \boxop \longrightarrow \Op$}

\begin{prop}\label{morphismtrees}
We have a morphism of coloured symmetric operads
$$\V:\boxop \longrightarrow \Op$$
sending the colour $(p,q,r)$ to $q$, defined on generators as
$$\V(C^{p_0,\ldots,p_n;p,r}_{q_1,\ldots,q_n}) := C_{q_1,\ldots,q_n}\text{ and } \V(\eta) := \eta .$$
\end{prop}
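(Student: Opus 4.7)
The strategy is to invoke the presentation of $\boxop$ by generators and relations from Definition~\ref{defgenFc}: to extend $\V$ to a morphism of coloured symmetric operads, I would verify that the prescribed images of the generators $C^{p_0,\ldots,p_n;p,r}_{q_1,\ldots,q_n}$ and $\eta$ have the correct arities in $\Op$ and that the associativity and unit relations of $\boxop$ are sent to identities that hold in $\Op$. Equivariance under the symmetric group action is then automatic, since in both presentations the generators are symmetric in their last $n$ inputs and the assignment does not touch this indexing.

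For the arity check, I would observe that under the colour map $(p,q,r)\mapsto q$ the arity profile of $C^{p_0,\ldots,p_n;p,r}_{q_1,\ldots,q_n}$ becomes $(n,q_1,\ldots,q_n;\sum_i q_i)$, which is precisely the arity of the $\Op$-generator $C_{q_1,\ldots,q_n}$ from \S\ref{subparop}, while $\eta\in\boxop(;(0,1,0))$ lands in $\Op(;1)$ as required.

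The core step is to verify the associativity relation~\eqref{eqboxopAssoc}. Applying $\V$ to both sides strips all the horizontal indices $\underline{p}^i,p,r$ and leaves
\[
C_{\underline{q}^1,\ldots,\underline{q}^n}\circ_1 C_{\underline{m}}
\;=\;\bigl(C_{\sum_{j}q^1_j,\ldots,\sum_{j}q^n_j}\circ(-,C_{\underline{q}^1},\ldots,C_{\underline{q}^n})\bigr)^{\si},
\]
which is exactly relation~(1) of $\Op$ with the identical shuffle $\si$ of~\eqref{shuffleOp}. The $\boxop$-unit relation projects analogously to $C_1\circ_1\eta=1_n=C_{1,\ldots,1}\circ(-,\eta,\ldots,\eta)$, which is relation~(2) of $\Op$, using $\V(\eta)=\eta$ and the fact that $\V$ must preserve the operadic identity.

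No real obstacle is expected here: the associativity and unit axioms of $\boxop$ are essentially the corresponding axioms of $\Op$ enriched by the horizontal $p$-colouring data, and $\V$ simply forgets this decoration. The only bookkeeping needed is to match the shuffle appearing in~\eqref{eqboxopAssoc} with the one appearing in the $\Op$ associativity, and these are literally the same shuffle by construction, so the two identities are structurally identical and $\V$ extends uniquely to a morphism of symmetric coloured operads.
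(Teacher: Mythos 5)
Your proposal is correct and follows exactly the route the paper takes (the paper's proof is the one-line ``The relations in $\boxop$ are readily verified''): check that the images of the generators have the right arities under the colour map $(p,q,r)\mapsto q$, and observe that applying $\V$ to the associativity and unit relations of Definition~\ref{defgenFc} yields precisely relations (1) and (2) of $\Op$ with the same shuffle $\si$, equivariance being automatic for a map defined on a presentation by generators and relations of a symmetric operad. Your spelled-out verification is simply the expanded version of what the paper leaves implicit.
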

\begin{proof}
The relations in $\boxop$ are readily verified.
\end{proof}

\subsubsection{The morphism $\Hor: \boxop \longrightarrow \Pro$}

\begin{prop}\label{morphismgraphs}
We have a morphism of coloured symmetric operads 
$$\Hor: \boxop \longrightarrow \Pro$$
sending the colour $(p,q,r)$ to $(p,r)$, defined on generators as 
\begin{align*}
\eta &\longmapsto \eta_P   \\
C^{p_0,\ldots,p_n;p,r}_{\fromto{q}{n}} &\longmapsto B_{p,r}^{p_0,p_n} \circ_2 (D_{p_0,\ldots,p_n}) 
\end{align*}
where 
$$D_{p_0,\ldots,p_n} :=  D_{p_1,p_{n-1},p_n} \circ_1 D_{p_1, p_{n-2}, p_{n-1}} \circ_1 \ldots \circ_1 D_{p_1,p_2,p_3}$$
\end{prop}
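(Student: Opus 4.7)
The plan is to verify directly that the assignment on generators extends to a morphism of symmetric coloured operads, which amounts to checking colour compatibility together with the defining relations of $\boxop$: the two unit relations and the associativity relation \eqref{eqboxopAssoc}. Colour compatibility is transparent from the definitions, as the input colour $(p,n-1,r)$ is sent to $(p,r)$, matching the first input colour of $P_{p,r}^{p_0,p_n}\circ_2 D_{p_0,\ldots,p_n}$, and similarly for all remaining inputs and the output.

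For the unit relations, my plan is to collapse the iterated $M$-composites via relation~(5) of $\Pro$ and the $P$-composites via relation~(4). Concretely, $D_{0,\ldots,0}$ reduces to the operadic identity $1_{0,0}$ by repeated application of relation~(5), so that $\Hor(C^{0,\ldots,0;p,r}_{1,\ldots,1})$ simplifies to $P_{p,r}^{0,0}$; relation~(4) then delivers $P_{p,r}^{0,0}\circ(-,\eta_P,\ldots,\eta_P)=1_{p,r}$, as required. The dual unit relation involving $C_n^{p,r;0,0}\circ_1\eta$ is handled symmetrically, collapsing the $P$-part through relation~(4) and the remaining $M$-part through relation~(5).

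For the associativity relation, the plan is to interpret both sides as planar graphs in $\Pro$ built from stacked $P$- and $M$-boxes, and to show that the two resulting graphs coincide. The left-hand side corresponds to composing an outer $P\circ_2 D$ block into the first input of an inner $P\circ_2 D$ block, while the right-hand side, after accounting for the shuffle $\sigma$, corresponds to substituting several $P\circ_2 D$ blocks into the non-first slots of a single outer $P\circ_2 D$ block. Transforming one into the other is then a matter of iteratively applying the three associativity relations of $\Pro$: relation~(1) reassociates the $M$-trees, relation~(2) reassociates the $P$-widenings, and most crucially relation~(3)—the interchange law between $M$ and $P$—trades vertical $M$-compositions against horizontal $P$-compositions.

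The step I expect to be the main obstacle is the combinatorial bookkeeping inside the associativity check: simultaneously tracking $\underline{p}^i$ versus $\underline{\hat{p}}^i$, the matching conditions $p^i_{m_i}=p^{i+1}_0$ that encode shared horizontal edges in the target graph, and the shuffle $\sigma$, all while executing the chain of interchange-law rewrites. Once the matching conditions are visualised as shared edges between adjacent $P$-boxes in the planar graph, however, the two sides manifestly reduce to the same graph, and the remaining verification is routine.
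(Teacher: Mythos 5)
Your proposal is correct and follows essentially the same route as the paper, whose entire proof is the one-line ``The relations in $\boxop$ are readily verified''; your plan (colour check, collapsing the unit relations via relations (4) and (5) of $\Pro$, and reducing the associativity relation to the $M$/$P$ associativity and interchange laws (1)--(3)) is a faithful elaboration of exactly that verification. One small imprecision: $D_{0,\ldots,0}$ is an $n$-ary operation and so cannot itself equal $1_{0,0}$ --- the collapse via relation (5) only happens after the units $\eta_P$ are substituted into its inputs, after which relation (4) applies to the remaining $P_{p,r}^{0,0}\circ_2\eta_P$.
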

\begin{proof}
The relations in $\boxop$ are readily verified.
\end{proof}

\subsection{Stackings of boxes}\label{subparstack}


\subsubsection{The operad of stackings}

In this section, we introduce ``stackings of boxes''. Roughly speaking, these are trees with ``matching'' boxes as vertices.


\begin{mydef}
Consider
\begin{enumerate}
\item colours $(p_{1},q_{1},r_1),\ldots,(p_{n},q_{n},r_n)\in \N^3$,
\item a tree $S\in \Tree(\nth{q};q)$,
\item a collection of natural numbers $(s_i )_{i \text{ leaf of } S}$.
\end{enumerate}
We call the datum of (1) and (2) a \emph{stacking $S$ of the boxes $(p_{1},q_{1},r_1),\ldots,(p_{n},q_{n},r_n)$}, and we will often refer to this simply as a \emph{stacking} $S$, with the boxes being understood as part of the data, sitting at the vertices.
Given a stacking $S$, we call (3) a \emph{collection of start values} for $S$.
We inductively define for every vertex $i$ the \emph{left depth $dL(i)$} and the \emph{right depth $dR(i)$} as follows:
 \begin{enumerate}
 \item If $i$ is a leaf, then set 
 $$dL(i) := s_i + p_i \quad \text{ and } \quad dR(i) := s_i + r_i $$
 \item If $i$ is not a leaf, with left-most child $b$ and right-most child $c$, then set 
 $$dL(i) := dL(b) + p_i    \quad \text{ and } \quad dR(i) := dR(c) + r_i$$
 \end{enumerate}
 $$\scalebox{0.8}{$\tikzfig{Depth}$}$$
 \end{mydef}
%
\begin{mydef}\label{defStacking}
A stacking $S\in \Tree(\nth{q};q)$ of boxes $(p_{1},q_{1},r_1),\ldots,(p_{n},q_{n},r_n); (p,q,r)$ is \emph{admissible} if there exists a collection of start values $(s_i)_i$ such that
\begin{enumerate}
\item the start value $s_i =0$ if $q_i >0$,
\item for an internal vertex $b$ the adjacent depths of its children $b_{1} \tre \ldots \tre b_{q_{a}}$ agree, that is, $dR(b_{i}) = dL(b_{i+1})$ holds for $1 \leq i  < q_{a}$,
 \label{OperadfcOp1} 
 \item for $u$ the root of $S$ we have $dL(u)=p $ and $dR(u)= r$.\label{OperadfcOp2}
\end{enumerate}
\end{mydef}

 Being admissible is well-defined due to the following lemma.
\begin{lemma}
For an admissible stacking $S$ the collection of start values $(s_{a})_{a \text{ leaf of } S}$ is uniquely determined.
\end{lemma}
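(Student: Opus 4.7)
The plan is to show that admissibility of $S$ uniquely pins down the depths $dL(a)$ (equivalently $dR(a)$) at every vertex $a$ of $S$; once this is established, the start values at the leaves are read off via the identity $s_i = dL(i) - p_i$.

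The first step is to observe that for every vertex $a$, the difference $\Delta_a := dR(a) - dL(a)$ is intrinsic to the subtree rooted at $a$ and is independent of the chosen start values. For a leaf $i$ this is immediate from the defining formulas: $\Delta_i = r_i - p_i$. For an internal vertex $a$ with children $b_1 \tre \cdots \tre b_{q_a}$ one expands
\[
\Delta_a = \bigl(dR(b_{q_a}) + r_a\bigr) - \bigl(dL(b_1) + p_a\bigr)
\]
and telescopes using the matching condition $dR(b_j) = dL(b_{j+1})$ from (2), yielding $\Delta_a = r_a - p_a + \sum_{j=1}^{q_a} \Delta_{b_j}$. The claim then follows by induction on the height of the subtree rooted at $a$.

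The second step is a top-down recursion establishing that admissibility uniquely determines $dL(a)$ at every vertex. At the root $u$, condition (3) gives $dL(u) = p$. At an internal vertex $a$ with children $b_1 \tre \cdots \tre b_{q_a}$, the definition of depth gives $dL(b_1) = dL(a) - p_a$, and condition (2) combined with Step 1 forces
\[
dL(b_{j+1}) = dR(b_j) = dL(b_j) + \Delta_{b_j}
\]
for $1 \le j < q_a$. Hence each $dL(b_j)$ is a prescribed function of $dL(a)$ and of the intrinsic quantities $\Delta_{b_k}$, so is uniquely determined by induction on distance from the root. Specialising to a leaf $i$, the formula $dL(i) = s_i + p_i$ yields $s_i = dL(i) - p_i$, which concludes the argument.

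There is no genuine obstacle here; the proof is a bookkeeping exercise with a clean recursive structure. The only point to get right is the order in which the two inductions are carried out: the $\Delta_a$ must be computed bottom-up (Step 1), while $dL$ is then propagated top-down (Step 2). It is also worth noting that condition (1) of Definition \ref{defStacking} is not invoked in this uniqueness argument; it plays a role only for the \emph{existence} of admissible start values, in that it prevents the system of equations from being inconsistent when a leaf carries a non-trivial box.
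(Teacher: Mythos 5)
Your proof is correct, and it takes a genuinely different route from the paper's. The paper works leaf-to-leaf: for consecutive leaves $a_i \tre a_{i+1}$ it takes the two paths down from their deepest common ancestor and converts the single matching condition $dR(c_1) = dL(d_1)$ into an explicit affine relation between $s_{a_i}$ and $s_{a_{i+1}}$, then anchors the resulting chain using $dL(u) = p$ and $dR(u) = r$. You instead make two passes over the tree: a bottom-up induction showing that $\Delta_a = dR(a) - dL(a)$ depends only on the subtree at $a$ and not on the start values, followed by a top-down propagation of $dL$ from the root. Your version is cleaner to verify, determines all depths $dL(a)$ (not only the leaf start values), and uses exactly one anchoring equation, whereas the paper's chain is anchored at both ends --- the second anchor being a consistency (existence) matter rather than a uniqueness one. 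In exchange, the paper's argument produces the explicit formula relating consecutive start values, which is closer to how one computes them in practice. One quibble with your closing remark: it is true that condition (1) of Definition \ref{defStacking} is not invoked in the uniqueness argument, but describing its role as ``preventing inconsistency'' is backwards --- it is an additional constraint on the (already uniquely determined) candidate start values, so it can only further obstruct existence. This side comment does not affect the validity of your proof.
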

\begin{proof}
Let $a_{1} \tre\ldots \tre a_{k}$ be the leaves of $S$. Consider $a_{i}$ and $a_{i+1}$: let $b$ be the vertex under both $a_i$ and $a_{i+1}$ maximal for the relation $<$. Then there exist two unique paths $(b,c_{1},\ldots,c_{m},a_{i})$ and $(b,d_{1},\ldots,d_{l},a_{i+1})$.
$$\scalebox{0.8}{$\tikzfig{Startvalues_proof}$}$$
 Consequently, $c_{1}\tre d_{1}$ and they are minimal for this relationship, thus $dR(c_{1}) = dL(d_{1})$, or equivalently 
$$\sum_{i=1}^{m}r_{c_{i}} + r_{a_i} + s_{a_i} = \sum_{i=l}^{m}p_{d_{i}} + p_{a_{i+1}}+ s_{a_{i+1}}$$
As a result, $s_{a_1},\ldots,s_{a_k}$ determine each other, and $s_1$ and $s_{a_k}$ are determined respectively by $p$ and $r$.
\end{proof}
%
%

\begin{vb}\label{exstacking}
The following two stackings of boxes are respectively admissible and non-admissible:
\begin{equation}
\scalebox{0.85}{$\tikzfig{stacking_example}$} \quad \text{ and } \quad \scalebox{0.85}{$\tikzfig{stacking_nonexample}$}
\end{equation}
Note that the grey area represents the start value $s_4=1$. If a label $p,q$ or $r$ is zero, it represents a degenerate side which we draw using two parallel lines representing an equality sign.
\end{vb}

\begin{lemma}
The substitution of stackings is well-defined. Moreover, it preserves admissibility.
\end{lemma}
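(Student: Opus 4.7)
The plan is first to verify that $S \circ_v T$ is a well-defined stacking, and then to show that admissibility propagates under substitution. Well-definedness is inherited from the operadic substitution of the underlying full trees in $\Op$: the arity $q_v$ of $v$ in $S$ matches the total output arity $q_v$ of $T$ (the standard matching condition), and box labels carry over from $S$ (at vertices distinct from $v$) and from $T$ (at all vertices of $T$), giving a new tree with boxes.

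For admissibility, let $(s_a)_a$ and $(t_b)_b$ denote the unique collections of start values witnessing admissibility of $S$ and $T$ respectively. The central mechanism is a simple \emph{shifting principle}, immediate from the recursive definition of depths: uniformly increasing all leaf start values of a stacking by a constant $\Delta$ shifts every vertex depth $dL, dR$ by $\Delta$, and hence preserves all adjacencies. Since $T$'s total box matches the colour $(p_v, q_v, r_v)$ of $v$, the root of $T$ has depths $(p_v, r_v)$ in $T$'s own coordinates. If $v$ is a leaf of $S$, the leaves of $U := S \circ_v T$ are the leaves of $S$ distinct from $v$ together with all leaves of $T$; I set $\tilde s_c := s_c$ on the former and $\tilde s_c := t_c + s_v$ on the latter. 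Condition (1) holds since any $T$-leaf with $q_c > 0$ forces $q_v = \sum_b q_b > 0$ and hence $s_v = 0$ by admissibility of $S$; condition (2) follows from the shifting principle applied to $T$'s portion of $U$ together with the unchanged depths in $S$'s portion; condition (3) holds because the root of $U$ coincides with the root of $S$ with unchanged depths.

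If instead $v$ is an internal vertex of $S$, then the positive-arity leaves of $T$ absorb consecutive groups of $v$'s children in $S$ (becoming internal vertices of $U$), while the arity-zero leaves of $T$ remain leaves of $U$. I would set $\tilde s_c := s_c$ on the leaves of $S$ and, for each arity-zero leaf $c$ of $T$, take $\tilde s_c$ to be the value forced by the adjacency with its sibling on the left in $U$. The main obstacle lies in this second case: one has to check that this ``locally forced'' assignment is globally consistent with $T$'s own admissibility and with the adjacencies at the internal vertices of $U$ coming from $T$'s interior. This reduces to a depth-propagation bookkeeping combining the adjacencies among $v$'s children in $S$ with those around the corresponding leaves of $T$, invoking once more the shifting principle and the matching of $T$'s root depths with $v$'s position in $S$.
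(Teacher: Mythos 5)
Your proposal is correct in outline but takes a genuinely different, more explicit route than the paper: the paper's proof consists of the single observation that substitution of stackings is a particular case of substitution of trees, together with the assertion that preservation of admissibility is geometrically clear, whereas you actually construct the witnessing start values for the composite. Your treatment of the case where $v$ is a leaf of $S$ is complete and correct: the shifting principle, the choice $\tilde s_c = t_c + s_v$, and the observation that $q_c>0$ forces $q_v>0$ and hence $s_v=0$, together verify all three conditions of Definition \ref{defStacking}. What your write-up buys is an explicit recipe (which also makes the uniqueness of start values and the compatibility with the operad structure of $\Stack$ transparent); what the paper's one-line argument buys is brevity, at the cost of leaving the reader to draw the picture.

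The internal-vertex case, which you explicitly leave as a sketch, is where the actual content lies, and two points need attention there. First, ``the value forced by the adjacency with its sibling on the left'' is undefined when an arity-zero leaf of $T$ is the leftmost child of its parent; the forcing there comes from the parent's left depth, i.e.\ from adjacencies further down, as in the paper's uniqueness lemma. A cleaner assignment avoiding case distinctions: writing $w_1 \tre \ldots \tre w_{q_v}$ for the children of $v$ in $S$, set $h_k := dR_S(w_k) = dL_S(w_{k+1})$ for $0<k<q_v$ (these agree by admissibility of $S$), with $h_0 := dL_S(v)-p_v$ and $h_{q_v} := dR_S(v)-r_v$; then for each arity-zero leaf $c$ of $T$ put $\tilde s_c := t_c + h_{k_c}$, where $k_c$ is the number of inputs of $T$ lying to the left of $c$. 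Second, the deferred ``global consistency'' is then a finite induction on $T$: every vertex $x$ of $T$ satisfies $dL_U(x) = dL_T(x) + h_{k}$ and $dR_U(x) = dR_T(x) + h_{k'}$, where $k$ (resp.\ $k'$) counts the inputs of $T$ to the left of the left (resp.\ right) edge of the substacking at $x$. From this all adjacencies inside $T$ are inherited from those of $T$, the adjacencies among the $w_j$ are untouched, and the root of $T$ acquires exactly the depths $dL_S(v)$, $dR_S(v)$, so the remaining conditions for $U$ coincide with those for $S$. With that inserted, your argument is a complete proof.
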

\begin{proof}
This is well-defined as it is in fact a particular case of substitution of trees. Geometrically it is clear that the substitution of an admissible stacking in an admissible stacking is admissible. 
\end{proof}
The above lemma warrants the following definition.

\begin{mydef}\label{OperadfcOp}
Let $\Stack$ be the $\N^3$-coloured operad of admissible stackings, that is,
\begin{itemize}
\item  $\Stack\left( (p_{1},q_{1},r_1),\ldots,(p_{n},q_{n},r_n);(p,q,r)\right)$ is the set of admissible stackings of the corresponding boxes,
\item the $\Ss$-action is given by permutation of vertices, that is, $S^{\si}$ is the stacking given by renaming box $i$ in $S$ by $\si\inv(i)$,
\item composition is given by substitution of stackings in boxes. 
\end{itemize} 
\end{mydef}

\begin{vb}\label{excomposingstacking}
We compose the admissible stacking from \ref{exstacking} at box $4$ with another admissible stacking 
$$ \scalebox{0.85}{$\tikzfig{stacking_example}$} \quad \circ_4 \quad \scalebox{0.85}{$\tikzfig{stacking_example_2}$} \quad = \quad \scalebox{0.85}{$\tikzfig{stacking_composed}$}$$
\end{vb}

From here on, we will solely work with admissible stackings and thus we simply speak of ``stackings''.

\subsubsection{Adding a unit}

\begin{mydef}
Let $\Stack[\eta]$ be the operad generated by $\Stack$ and a formal element $\eta\in \Stack[\eta](;(0,1,0))$ satisfying the following two relations:
\begin{enumerate}
\item let $i$ be an internal vertex of a stacking $T$ with exactly one child $j$ and let $T'$ be the stacking obtained from $T'$ by contracting the substacking spanned by vertices $i$ and $j$, then we have $T \circ_i \eta = T'$.
\item let $j_1 \tre \ldots \tre j_{q_i}$ be the set of children of an internal vertex $i$ in a stacking $T$ with corresponding number of inputs $q_{j_s}$. Assume for simplicity $j_1 < \ldots < j_{q_i}$ as numbers. Let $T'$ be the stacking obtained from $T$ by contracting the substacking spanned by the vertices $i,j_1,\ldots,j_{q_i}$, then we have 
$T \circ_{j_{q_i}} \eta \circ_{j_{q_i-1}} \ldots \circ_{j_1} \eta =T'$. 
\end{enumerate}
\end{mydef}
\begin{opm}
Remark that these conditions correspond to those in $\S \ref{subsubpartrees}$.
\end{opm}

\subsubsection{Comparing $\Stack[u]$ and $\boxop$}

The generator $C^{p_0,\ldots,p_n;p,r}_{q_1,\ldots,q_n}$ can be uniquely represented by the stacking
\begin{equation}
\scalebox{0.8}{$\tikzfig{genFcOp}$} . \label{drawingGeneratorsBoxop}
\end{equation}
Note in particular that the generator $C^{p_0;p,r}$ corresponds to the stacking
\begin{equation}
\label{drawingHorizontalUnit}
\scalebox{0.8}{$\tikzfig{stackingHorizontalUnit}$}
\end{equation}
which consists of a single box and start number $p_0$.

The associativity condition can then by expressed by the equality of stackings
\begin{equation}
\label{drawingAssocStackings}
\scalebox{0.8}{$\tikzfig{Stackings_associative_left}$} \quad = \quad  \scalebox{0.8}{$\tikzfig{Stackings_associative_right2}$}
\end{equation}

\begin{prop}
We have a morphism of coloured symmetric operads 
$$\psi:\boxop \longrightarrow \Stack[\eta]$$
defined by sending the generator $C^{p_0,\ldots,p_n;p,r}_{q_1,\ldots,q_n}$ to the stackings \eqref{drawingGeneratorsBoxop} and sending the unit $\eta$ of $\boxop$ to the formal element $\eta$.
\end{prop}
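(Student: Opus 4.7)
The plan is to exploit the generators-and-relations presentation of $\boxop$ from Definition \ref{defgenFc}: since a morphism of $\N^3$-coloured symmetric operads out of a presented operad is uniquely determined by its values on generators, and is well-defined precisely when those values respect the declared arities/colours and satisfy every defining relation in the target, it suffices to carry out three verifications. First, match arities and colours of the generators; second, check the associativity relation \eqref{eqboxopAssoc} is satisfied by the images in $\Stack[\eta]$; third, check the two unit relations.

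For the colour and arity check, the image of $C^{p_0,\ldots,p_n;p,r}_{q_1,\ldots,q_n}$ under $\psi$ is the two-level stacking \eqref{drawingGeneratorsBoxop} whose root vertex carries the first-input box and whose $n$ children carry the remaining input boxes $(p_{i-1}, q_i, p_i)$ in left-to-right order. Start values on the leaves are those prescribed by admissibility (zero where $q_i > 0$, forced by the adjacency condition elsewhere). Admissibility itself is immediate: the adjacency requirement at the root reduces to the identity $p_i = p_i$ between consecutive boxes, while the left and right depths of the root evaluate respectively to $p + p_0$ and $r + p_n$, matching the declared output colour. The image of $\eta$ lies in $\Stack[\eta](;(0,1,0))$ by construction.

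For the associativity relation, applying $\psi$ to both sides yields two ways of assembling the same three-level stacking from the same collection of generator-stackings. Since composition in $\Stack[\eta]$ is substitution of stackings, and substitution of full trees is strictly associative in $\Stack$ (inherited from the standard associativity of tree substitution recalled in $\S \ref{subparop}$), the outer-first and inner-first orderings produce identical underlying trees with identical boxes at every vertex. The shuffle $\sigma$ reindexing the vertices into standard order is precisely the permutation \eqref{shuffleOp} that already arises in the corresponding relation for $\Op$, so it cancels between the two sides. Preservation of admissibility under substitution (the lemma preceding Definition \ref{OperadfcOp}) together with uniqueness of start values guarantees the two composites agree on the nose as stackings; this equality is exactly the one pictured in \eqref{drawingAssocStackings}.

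For the unit relations, the image of $C \circ_1 \eta = 1$ is the substitution of the formal element $\eta$ at the unique child of the root of a single-leaf two-level stacking, which is exactly relation (1) defining $\Stack[\eta]$; analogously, the image of $C_{1,\ldots,1} \circ (-, \eta, \ldots, \eta) = 1$ substitutes $\eta$ simultaneously at each of the $n$ leaves (all of colour $(0,1,0)$) of a two-level stacking, which is exactly relation (2). The only bookkeeping task throughout is tracking start values under substitution, and the main (mild) obstacle is to avoid re-deriving them each time; this is bypassed by the uniqueness lemma for start values of admissible stackings, which ensures that once admissibility of a composite stacking is established, its start values are forced, so the two sides of each relation coincide without further calculation.
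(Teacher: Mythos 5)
Your proof is correct and follows essentially the same route as the paper, whose own proof simply observes that the associativity relation is the equality of stackings pictured in \eqref{drawingAssocStackings} and that the unit relations are readily verified; your write-up fills in the colour/arity matching, the admissibility of the generator stackings, and the role of the shuffle $\si$ that the paper leaves implicit. One small slip: in the first unit relation $C^{p,r;0,0}_n \circ_1 \eta$ the formal element $\eta$ is substituted at the \emph{root} of the two-level stacking (the internal vertex with a unique child), not at its child, though your identification of this with relation (1) of $\Stack[\eta]$ is nonetheless the right one.
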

\begin{proof}
Picture \eqref{drawingAssocStackings} verifies the associativity. The relations of the unit are readily verified.
\end{proof}

A stacking $S$ is in \emph{standard order} if the underlying tree is in standard order. In the rest of this section, for simplicity, we often omit the indices and write $C_n$ for the generators of $\boxop$.

\begin{lemma}\label{decompStandard}
Let $S \in \Stack$ be a stacking, then either
\begin{itemize}
\item $S$ consists of a single vertex and equals either the operadic unit $1$ or the generator $C_0$,
\end{itemize}
or
\begin{itemize}
\item there exists a unique permutation $\si$ and unique stackings $S_1,\ldots,S_k$ in standard order such that
$$S = \left( \quad \scalebox{0.8}{$\tikzfig{decompStacking}$}\quad \right)^\si  = \left(C_k \circ (- , S_1,\ldots,S_k)\right)^\si$$
\end{itemize} 
\end{lemma}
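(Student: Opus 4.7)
The plan is to do case analysis on the root of the tree underlying $S$, and in the nontrivial case to peel off the corolla at the root.

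First I would look at the root $u$ of $S$. If $u$ is a leaf, then $S$ consists of a single vertex. By admissibility condition (1) of Definition \ref{defStacking}, the start value $s_u$ can only be nonzero when $q_u = 0$; in that case $S$ is exactly the single-box stacking picture of $C_0$ (cf.~\eqref{drawingHorizontalUnit}), with parameters $p_0, p, r$ read off from $s_u$ and the labels of $u$. When $s_u = 0$, the input and output colours of $S$ coincide and $S$ is the operadic identity $1$. This disposes of the first bullet.

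Otherwise, let $c_1 \tre \cdots \tre c_k$ be the children of $u$, and let $T_i$ denote the subtree of $S$ rooted at $c_i$, inheriting labels from $S$ and start values from the leaves it contains. I would check that each $T_i$ is an admissible stacking: conditions (1) and (2) of Definition \ref{defStacking} are inherited verbatim from $S$, while the root condition (3) for $T_i$ follows from the depth recursion $dL(u) = dL(c_1) + p_u$ and $dR(u) = dR(c_k) + r_u$ at the root of $S$, together with the adjacent matching $dR(c_i) = dL(c_{i+1})$ for $1 \le i < k$. Relabelling the vertex set of $T_i$ along the unique order-preserving bijection with $\lh n_i \rh$, where $n_i$ is the number of vertices of $T_i$, yields a stacking $S_i$ in standard order.

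Next I would match the root $u$ with its adjacent labels to the unique generator $C_k = C^{p_0,\ldots,p_k;p,r}_{q_1,\ldots,q_k}$ of $\boxop$, where the $p_i$ come from the admissibility start values at the left boundary of the children and the $q_i$ are the outer middle labels of the $T_i$. Substituting $S_i$ into the $i$-th input of $C_k$ produces a stacking with the same underlying labelled tree as $S$ up to a reindexing of the vertex set; the permutation $\sigma$ undoing this reindexing yields the claimed equality
\[
S = (C_k \circ (-, S_1, \ldots, S_k))^{\sigma}.
\]
For uniqueness, any such decomposition must have its composite tree isomorphic to the tree of $S$, so $k$, the subtrees $T_i$, and hence (after standardisation) the $S_i$ are forced by $S$; the permutation $\sigma$ is then pinned down by the requirement that the $S_i$ lie in standard order and that the composite equal $S$ on the nose. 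I expect the main obstacle to be purely bookkeeping — correctly tracking how start values, depths, and middle labels descend from $S$ to each $T_i$ and match the arity/matching conditions on $C_k$ — but no conceptual subtlety arises once the depth recursion at the root is unpacked.
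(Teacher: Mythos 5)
Your argument is correct and follows the same route as the paper's proof, which simply declares the decomposition ``immediate from the picture'' after reducing to standard order; you are filling in exactly the details that the paper leaves implicit (admissibility of the subtrees at the children of the root, identification of the root corolla with a generator $C_k$, and uniqueness of $\sigma$ and the $S_i$). No gap.
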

\begin{proof}
For $S$ consisting of a single vertex, this is trivial. 

For $S$ a stacking with at least two vertices, it is clear that there exists a unique permutation $\si$ such that $S^\si$ is in standard order. Assume $S$ is in standard order, then the result is immediate from the picture. Note that $S_1,\ldots, S_k$ are in standard order since $S$ is.
\end{proof}

\begin{constr}\label{constrMorphStackings}
Let the map $\varphi: \Stack \longrightarrow \boxop$ on a stacking $S$ be defined by induction on the number $n$ of boxes as follows
\begin{itemize}
\item[\underline{$n= 1$}] $S$ is either the operadic unit $1$ or the generator $C_0$, for which we define $\varphi(S)$ as their counterpart in $\boxop$,
\item[\underline{$n >1$}] Given the unique decomposition of $S$ from lemma \ref{decompStandard}, we define 
$$\varphi(S) :=  (C_k \circ_{k+1} \varphi(S_k) \circ_{k} \ldots \circ_2 \varphi(S_1))^\si$$
\end{itemize}
\end{constr}

\begin{prop}
Construction \ref{constrMorphStackings} defines a morphism of coloured symmetric operads and extends to a morphism 
$$\varphi: \Stack[\eta] \longrightarrow \boxop$$
by sending $\eta$ to $\eta$.
\end{prop}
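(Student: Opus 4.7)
The strategy is to verify, in order, that $\varphi$ is well-defined, respects the operadic identity, is $\Ss$-equivariant, and commutes with operadic composition; then to argue that it descends to $\Stack[\eta]$. Well-definedness on $\Stack$ is immediate from Lemma~\ref{decompStandard}: the recursion is indexed on the number of vertices, and at each step the decomposition $S = (C_k \circ(-,S_1,\ldots,S_k))^{\sigma}$ into a top corolla and standard-order subtrees together with the outer permutation $\sigma$ is uniquely determined. The operadic unit $1$ (a single $1$-valent vertex) goes to $1 \in \boxop$ by the base case, and the single-vertex stacking $C_0$ goes to the corresponding generator.

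For $\Ss$-equivariance, given $S$ with decomposition $S = (C_k \circ(-,S_1,\ldots,S_k))^{\sigma}$ and any $\tau \in \Ss_n$, the stacking $S^\tau$ has the same top corolla and the same standard-order subtrees $S_1,\ldots,S_k$, only the outer permutation changing from $\sigma$ to $\sigma\tau$ (in the convention of the paper). Hence from Construction~\ref{constrMorphStackings},
\[
\varphi(S^\tau) \;=\; \bigl(C_k \circ_{k+1} \varphi(S_k) \circ_k \cdots \circ_2 \varphi(S_1)\bigr)^{\sigma\tau} \;=\; \varphi(S)^\tau.
\]

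The main step is compatibility with operadic composition, which I would prove by induction on the total number of vertices of $S$ and $T$, showing $\varphi(S \circ_j T) = \varphi(S) \circ_j \varphi(T)$. When $S$ or $T$ is a single vertex the claim is immediate from the base case of the recursion. For the inductive step, write $S = (C_k \circ (-,S_1,\ldots,S_k))^{\sigma}$ in standard form, and let $m$ be the standard-form position such that, under $\sigma$, the box to be substituted lies either at the top of $S$ (in which case $j$ corresponds to the $(-)$-slot) or inside the subtree $S_m$. The substitution $S \circ_j T$ can then be rewritten, using operadic associativity and equivariance for stackings, as $(C_k \circ (-, S_1, \ldots, S_m \circ_{j'} T, \ldots, S_k))^{\sigma'}$ for a suitable position $j'$ inside $S_m$ and a re-shuffled outer permutation $\sigma'$. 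Applying $\varphi$ and invoking the induction hypothesis on $S_m \circ_{j'} T$ turns this into the corresponding nested $\circ_i$'s in $\boxop$, which — by the associativity and equivariance axioms of $\boxop$ — coincide with $\varphi(S) \circ_j \varphi(T)$. The analogous, slightly simpler, argument treats the case where $j$ sits in the $(-)$-slot. The main obstacle lies here, in matching the outer permutation $\sigma'$ of the composite stacking with the shuffle permutation produced by associativity of the $\circ_i$-expansion in $\boxop$; this is essentially the bookkeeping analogue of~\eqref{shuffleOp}.

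Finally, to extend $\varphi$ to $\Stack[\eta]$ by $\eta \mapsto \eta$, it suffices to verify that the two defining relations of $\Stack[\eta]$ are sent to identities in $\boxop$. Contracting an internal vertex with a single $\eta$-child corresponds, via the inductive definition of $\varphi$ applied to the left-hand side, precisely to the relation $C^{p,r;0,0}_1 \circ_1 \eta = 1$, while contracting a corolla with all $\eta$-leaves corresponds to $C^{0,\ldots,0;p,r}_{1,\ldots,1} \circ (-,\eta,\ldots,\eta) = 1$. Both are the unit relations (2) of Definition~\ref{defgenFc}, so $\varphi$ passes to the quotient and yields the required morphism $\Stack[\eta] \to \boxop$.
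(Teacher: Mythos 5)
Your overall skeleton (equivariance by construction, induction on the number of boxes, reduction of the unit relations to relation (2) of Definition~\ref{defgenFc}) matches the paper's, but the proposal misjudges where the difficulty lies, and the step it waves off is precisely the one that carries the proof. You treat the case where the substitution happens in the $(-)$-slot --- i.e.\ $T$ is plugged into the \emph{bottom} box of $S$ --- as ``analogous, slightly simpler.'' It is neither. When $j$ lies inside a subtree $S_m$, the composite really does localize as $(C_k\circ(-,S_1,\ldots,S_m\circ_{j'}T,\ldots,S_k))^{\sigma'}$ and the induction goes through as you describe; this is the easy case. But when $T$ is substituted into the bottom box, the composite stacking does \emph{not} have $C_k$ as its top corolla with a modified subtree: one must instead decompose $T=C_{k'}\circ(-,T_1,\ldots,T_{k'})$, and the subtrees $S_1,\ldots,S_k$ of $S$ get redistributed over the tops of the $T_s$, producing new standard-order stackings $S''_s = C_{l_s}\circ(T_s,S_{j_s},\ldots,S_{j'_s})$ together with internal permutations. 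Showing that $\varphi$ of this regrouped expression equals $\varphi(S)\circ_1\varphi(T)$ requires invoking the associativity relation \eqref{eqboxopAssoc} of $\boxop$ to reassociate $C_k\circ_1 C_{k'}$ and a careful match of the resulting shuffle with the permutations coming from standard-ordering the $S''_s$. None of this is present in your argument, and it cannot be absorbed into the ``bookkeeping analogue of \eqref{shuffleOp}'' you mention for the other case.

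A second, smaller gap: your base case asserts that the claim is ``immediate'' when $S$ is a single vertex. For $S=C_0=C^{p_0;p,r}$ with $p_0>0$ and $T$ arbitrary, $C_0\circ_1 T$ is the stacking $T$ with all start values shifted by $p_0$; to see that $\varphi(C_0\circ_1 T)=C_0\circ_1\varphi(T)$ one must push $C_0$ through the decomposition $T=C_{k'}\circ(-,T_1,\ldots,T_{k'})$, i.e.\ rewrite $C_0\circ_1 T$ as $C_{k'}\circ(-,C_0\circ_1 T_1,\ldots,C_0\circ_1 T_{k'})$, apply induction, and again use the associativity relation of $\boxop$. This is a genuine inductive step, not a triviality. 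The rest of your plan (well-definedness, equivariance, the unit relations) is fine.
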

\begin{proof}
By construction, $\varphi$ is equivariant. Consider $S$ and $S'$ standard stackings for which we show that $\varphi(S \circ_i S') = \varphi(S) \circ_i \varphi(S')$. Let $n$ and $m$ be respectively the number of boxes in $S$ and $S'$. We continue by induction on $n+m\geq 2$. 

\proofpart{1}{$n+m = 2$}
In this case $n=m=1$, and thus $S$ and $S'$ are either the operadic identity or the generator $C_0$. In all these cases, the statement is readily verified.

\proofpart{2}{$n+m>2$}
We fix $k = n+m$. We go through the cases $n=1,\ldots,k-1$.

\begin{itemize}
\item[\underline{$n=1$}] $S$ is either the operadic unit $1$ or the generator $C_0$. The first case is trivially satisfied, so we assume the second. The stacking $S'$ thus has a decomposition as $S'= C_{k'} \circ (-, S_1',\ldots,S'_{k'})$. The stacking $C_0 \circ S'$ then differs from $S'$ solely by adding a fixed number $p_0$ to all starting numbers. Hence, we can equivalently write it as adding $p_0$ to the suitable starting numbers of the stackings $S_1',\ldots,S_{k'}'$ 
$$C_0 \circ_1 S' = C_{k'} \circ (-,C_0 \circ_1 S_1',\ldots,C_0 \circ_1 S'_{k'})$$
and thus
$$\varphi(C_0 \circ_1 S') = C_{k'} \circ  (-,\varphi(C_0 \circ_1 S_1'),\ldots,\varphi(C_0 \circ_1 S'_{k'}))$$
Using induction and reordering of the operadic composition, we further compute
$$ C_{k'} \circ (-,C_0,\ldots,C_0) \circ (-,\varphi(S_1'),\ldots,\varphi(S'_{k'}))$$
Using relation \eqref{OperadfcOp1} from $\boxop$, we obtain
$$C_0 \circ_1 (C_{k'} \circ (-,\varphi(S'_1),\ldots,\varphi(S'_{k'}))= C_0 \circ_1 \varphi(S')$$
\item[\underline{$n>1$}] For $n>1$, $S$ has a decomposition as $C_k \circ (-,S_1,\ldots,S_k)$. We discern two cases.

\proofcase{1}{$i>1$}
In this case, we have
$$S \circ_i S' = C_k \circ (-,S_1,\ldots,(S_j \circ_{i'} S')^\tau,\ldots, S_k)$$
for a suitable $j$ and $i$, and a permutation $\tau$ such that $(S_j\circ_{i'}S')$ is in standard order. As $S_j \circ_{i'} S'$ consists of strictly less boxes as $S \circ_i S'$, we can compute 
$$\varphi(S \circ_i S') = C_k \circ (-,\varphi(S_1),\ldots,\varphi((S_j\circ_{i'}S')^\tau), \ldots,\varphi(S_k)) = C_k\circ(-,\varphi(S_1),\ldots,\varphi(S_k)) \circ_i \varphi(S') = \varphi(S) \circ_i \varphi(S')$$

\proofcase{2}{$i=1$}
If $S'$ consists of a single box, $S'= 1$ since $n>1$. Hence, $S \circ_1 S' = S$. Assume now that $S'$ consists of at least two boxes, then consider its decomposition $S'= C_{k'} \circ (-, S_1',\ldots,S'_{k'})$ as in lemma \ref{decompStandard}.

We compute 
$$ \scalebox{0.8}{$\tikzfig{decompstacking}$} \quad \circ_1 \quad \scalebox{0.8}{$\tikzfig{StackedBoxes_1}$} \quad = \quad \scalebox{0.8}{$\tikzfig{StackedBoxes_2}$}$$
for unique permutations $\si_1,\ldots,\si_{k'}$ such that each stacking $S''_i$ is of the form
$$\scalebox{0.8}{$\tikzfig{StackedBoxes_3}$} \quad = C_{l_1} \circ (S_i, S'_{j_i},\ldots,S'_{j'_i}) $$
for certain $j_i, j'_i$. Since each $S''_i$ consists of strictly fewer boxes as $S \circ_i S'$, by induction $\varphi$ acts as a morphism of operads on $S''_i$. We thus compute $\varphi(S\circ_i S')$ as
\begin{align*}
&=C_{k'} \circ (-, \varphi(S^{''}_1),\ldots,\varphi(S^{''}_{k'})) \\
&=(C_{k'} \circ (-,C_{l_1},\ldots,C_{l_{k'}}) \circ (\varphi(S_1'),\varphi(S_1),\ldots,\varphi(S_{l_1}),\ldots,\varphi(S'_{k'}),\varphi(S_{\sum_{j<k'}l_j +1}), \ldots,\varphi(S_k)))^{\tau} \\
&= (C_{k} \circ_1 C_{k'} \circ (-,\varphi(S_1'),\ldots,\varphi(S'_{k'}), \varphi(S_1),\ldots,\varphi(S_k)))^{\tau'} \\
&= C_k \circ (-,\varphi(S_1),\ldots,\varphi(S_k)) \circ_1 (C_{k'} \circ (-,\varphi(S'_1),\ldots,\varphi(S'_{k'})) = \varphi(S) \circ_1 \varphi(S')
\end{align*}
for an appropriate permutation $\tau$. Note that we used relation \eqref{OperadfcOp1} from $\boxop$ to obtain the third equality. 
\end{itemize}
It is easy to verify that $\varphi$ extends to include the unit $\eta$.
\end{proof}

\begin{theorem}\label{ThmBoxopStackings}
The coloured symmetric operads $\boxop$ and $\Stack[\eta]$ are isomorphic.
\end{theorem}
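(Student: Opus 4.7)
The plan is to verify that the two operad morphisms defined above,
\[ \varphi: \Stack[\eta] \longrightarrow \boxop \quad\text{and}\quad \psi: \boxop \longrightarrow \Stack[\eta], \]
are mutually inverse morphisms of $\N^3$-coloured symmetric operads.

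First I would check that $\varphi \circ \psi = \mathrm{id}_{\boxop}$. Since both morphisms respect operadic composition and the $\Ss$-action, and $\boxop$ is generated by the elements $C^{p_0,\ldots,p_n;p,r}_{q_1,\ldots,q_n}$ together with $\eta$ subject to the relations in Definition \ref{defgenFc}, it suffices to evaluate the composite on these generators. On $\eta$ the identity holds by construction. On $C^{\underline{p};p,r}_{\underline{q}}$, the image under $\psi$ is the two-level stacking drawn in \eqref{drawingGeneratorsBoxop}; this stacking is already in standard order, and its unique decomposition via Lemma \ref{decompStandard} takes the form $C_n \circ (-, S_1, \ldots, S_n)$ where each $S_i$ is a single-box stacking representing the operadic identity (with $\eta$ handling any case in which $q_i = 0$). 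Unwinding Construction \ref{constrMorphStackings} at the base case then returns $C^{\underline{p};p,r}_{\underline{q}}$.

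For the opposite composite $\psi \circ \varphi = \mathrm{id}_{\Stack[\eta]}$, I would argue by induction on the number $n$ of boxes of a stacking $S \in \Stack$, dealing with the formal unit $\eta$ separately via the explicit assignment $\varphi(\eta) = \eta = \psi(\eta)$. The base case $n = 1$ is immediate: $S$ is either the operadic identity or $C_0$, and $\varphi$ then $\psi$ returns $S$. For $n > 1$, Lemma \ref{decompStandard} yields a unique decomposition
\[ S = \bigl( C_k \circ (-, S_1, \ldots, S_k) \bigr)^{\sigma}, \]
with each $S_j$ in standard order and strictly smaller than $S$. Using the definition of $\varphi$ and the fact that $\psi$ is an operad morphism,
\[ \psi(\varphi(S)) = \bigl( \psi(C_k) \circ (-, \psi(\varphi(S_1)), \ldots, \psi(\varphi(S_k))) \bigr)^{\sigma}. \]
By the induction hypothesis $\psi(\varphi(S_j)) = S_j$, and $\psi(C_k)$ is the two-level ``root + $k$ children'' stacking. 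Substituting the $S_j$ into the corresponding boxes precisely reassembles the standard form of $S^{\sigma^{-1}}$, so applying the outer $\sigma$ recovers $S$.

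The main obstacle will be the careful treatment of the shuffle permutations together with the $\eta$-contraction relations. Specifically, one must verify that the unique $\sigma$ supplied by Lemma \ref{decompStandard} interacts coherently with the $\Ss$-equivariance of both $\psi$ and $\varphi$, so that after reordering into standard form the reconstructed stacking agrees with $S$ on the nose (rather than up to a relabelling of vertices). In parallel, the unit relations defining $\Stack[\eta]$ (contraction of an internal vertex with a single $\eta$-child, or with all children equal to $\eta$) must be matched with relation (2) of Definition \ref{defgenFc}; this is a straightforward bookkeeping exercise given the parallel structure set up in \S \ref{subsubpartrees}. Once these points are in place, the two composites are identities on generators, and by the universal property governing morphisms out of $\boxop$ and $\Stack[\eta]$ the morphisms $\psi$ and $\varphi$ are mutually inverse, yielding the claimed isomorphism.
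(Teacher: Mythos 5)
Your proposal is correct and follows essentially the same route as the paper: the paper's proof likewise reduces the theorem to checking that the already-constructed operad morphisms $\psi$ and $\varphi$ are mutually inverse, verifying $\varphi\psi=\Id_{\boxop}$ on the generators $C^{\underline{p};p,r}_{\underline{q}}$ and $\eta$, and verifying $\psi\varphi=\Id_{\Stack[\eta]}$ by induction on the number of boxes via the unique decomposition of Lemma \ref{decompStandard}. Your write-up merely spells out details the paper leaves as ``readily verified.''
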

\begin{proof}
It suffices to show that the morphisms $\psi$ and $\varphi$ are inverse to each other. 

The identity $\varphi \psi = \Id_{\boxop}$ is readily verified on generators. Similarly, the identity $\psi \varphi = \Id_{\Stack[\eta]}$ is readily verified following the inductive construction \ref{constrMorphStackings}.
\end{proof}

\subsubsection{The vertical composite map}

\begin{prop}
The morphism
$$\V: \boxop \longrightarrow \Op$$
from Proposition \ref{morphismtrees} sends a stacking $S$ to its underlying tree $\V(S)=S$.
\end{prop}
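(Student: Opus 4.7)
The plan is to use the isomorphism $\boxop \cong \Stack[\eta]$ established in Theorem \ref{ThmBoxopStackings} together with the inductive formula for $\varphi: \Stack[\eta] \to \boxop$ from Construction \ref{constrMorphStackings}. Concretely, I want to show that for any stacking $S$, the element $\V(\varphi(S)) \in \Op$ is precisely the underlying full tree of $S$ (with box-labels forgotten, keeping only the number-of-children data at each vertex), and that this extends compatibly to the unit $\eta$.

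First I would observe that the statement is really a claim about how the operad morphism $\V$ interacts with the combinatorial description of $\boxop$ from \S\ref{subparstack}: the target $\Op$ has itself a combinatorial description as planar full trees with unit (\S\ref{subsubpartrees}), and $\V$ sends the generator $C^{p_0,\ldots,p_n;p,r}_{q_1,\ldots,q_n}$, depicted as the one-level stacking \eqref{drawingGeneratorsBoxop}, to the $(q_1,\ldots,q_n)$-corolla, i.e.\ to the underlying tree of that stacking. Moreover, $\V(\eta) = \eta$, so the formal unit of $\Stack[\eta]$ maps to the formal unit of the tree operad.

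Next I would run induction on the number $n$ of boxes in $S$, following Lemma \ref{decompStandard}. For $n=1$, $S$ is either the operadic identity or the single-box stacking corresponding to $C_0^{p_0;p,r}$, and in both cases $\V(\varphi(S))$ is by definition the underlying tree. For $n > 1$, use the unique standard-order decomposition $S = (C_k \circ (-, S_1, \ldots, S_k))^\sigma$. By construction
\[
\varphi(S) \;=\; \bigl(C_k^{\ldots} \circ_{k+1} \varphi(S_k) \circ_k \cdots \circ_2 \varphi(S_1)\bigr)^\sigma,
\]
and applying the operad morphism $\V$ together with $\V(C_k^{\ldots}) = C_k$ yields
\[
\V(\varphi(S)) \;=\; \bigl(C_k \circ_{k+1} \V(\varphi(S_k)) \circ_k \cdots \circ_2 \V(\varphi(S_1))\bigr)^\sigma.
\]
By the inductive hypothesis, each $\V(\varphi(S_i))$ is the underlying tree of $S_i$, and substitution of these into the $k$-corolla (together with the symmetric action $\sigma$ permuting the vertex labels) is exactly the underlying tree of $S$. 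The extension to formal units is automatic, since the contraction relations defining $\Stack[\eta]$ are modeled on precisely the contraction relations of trees with unit recalled in \S\ref{subsubpartrees}, and $\V(\eta) = \eta$.

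I do not expect any genuine obstacle here: once Theorem \ref{ThmBoxopStackings} is available, the content reduces to tracking that the horizontal data $(p_i,r_i)$ and the start values $s_i$ genuinely play no role for $\V$ on generators, and to checking that the inductive formula for $\varphi$ is compatible with substitution of trees. The only mild care needed is ensuring that the symmetric action $\sigma$ coming from the standard-order decomposition in $\Stack$ matches the $\Ss$-action on the tree operad, but this is immediate since in both operads the symmetric group acts by relabeling vertices.
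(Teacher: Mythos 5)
Your proposal is correct and is essentially the paper's argument made explicit: the paper simply observes that ``take the underlying tree'' is itself an operad morphism $\Stack[\eta]\to\Op$ (both compositions being tree substitution) which agrees with $\V$ on generators and units, hence equals $\V$. Your induction via the standard-order decomposition of Lemma \ref{decompStandard} and the formula for $\varphi$ is just the unwound version of that ``agree on generators'' argument, so there is no substantive difference.
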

\begin{proof}
Sending a stacking to its underlying tree is clearly a morphism of operads as both are defined using tree substitution. As they coincide on the generators, they are equal as morphisms.
\end{proof}

\subsubsection{The horizontal composite graph}

\begin{constr}\label{constrGraph}
Let $S$ be a stacking with $n$ boxes, then we construct its \emph{horizontal composite graph} $\Hor_S$ inductively as follows
\begin{itemize}
\item[\underline{$n= 1$}] $S$ is either the operadic unit $1$ or the generator $C^{p_0;p,r}$, for which we define
$$\scalebox{0.7}{$\tikzfig{imageHorizontalC_0}$} $$
\item[\underline{$n >1$}] Given the unique decomposition of $S$ from lemma \ref{decompStandard}, then the horizontal composite graphs $\Hor_{S_1},\ldots,\Hor_{S_k}$ are well-defined and we define $\Hor_S$ as the following graph
$$  \scalebox{0.7}{$\tikzfig{horizontalGraphC_0}$}$$
\end{itemize}
\end{constr}
%

\begin{prop}
The morphism
$$\Hor: \boxop \longrightarrow \Pro$$
from Proposition \ref{morphismgraphs} sends a stacking $S$ to the planar graph from construction \ref{constrGraph} and $\eta$ to $\eta_P$.
\end{prop}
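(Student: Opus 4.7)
The plan is to prove the equality of the two assignments $\boxop \to \Pro$ by working through the isomorphism $\boxop \cong \Stack[\eta]$ of Theorem \ref{ThmBoxopStackings} and inducting on the number of boxes of a stacking, mirroring the inductive construction of $\varphi$ in Construction \ref{constrMorphStackings}. Concretely, I will fix a stacking $S$ and show $\Hor(\varphi(S)) = \Hor_S$, where on the left $\Hor$ is the operad morphism from Proposition \ref{morphismgraphs} and on the right $\Hor_S$ is the graph from Construction \ref{constrGraph}; the case of the formal unit $\eta$ is handled separately and is immediate from the defining formula $\eta \mapsto \eta_P$.

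For the base case $n=1$, the stacking $S$ is either the operadic identity $1_{p,q,r}$, in which case both sides equal the identity of $\Pro$ in colour $(p,r)$, or the generator $C^{p_0;p,r}$, for which Construction \ref{constrGraph} gives exactly the graph displayed there. Comparing this with Proposition \ref{morphismgraphs}, one sees directly that $\Hor(C^{p_0;p,r}) = B_{p,r}^{p_0,p_0}\circ_2 D_{p_0}$ (with $D_{p_0}$ the trivial empty product) reproduces the same picture, so the two agree.

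For the inductive step with more than one box, I use the canonical decomposition $S = (C_k \circ (-, S_1, \ldots, S_k))^\sigma$ from Lemma \ref{decompStandard}. By Construction \ref{constrMorphStackings} we have $\varphi(S) = (C_k \circ (-, \varphi(S_1), \ldots, \varphi(S_k)))^\sigma$, and the induction hypothesis gives $\Hor(\varphi(S_i)) = \Hor_{S_i}$ for each $i$. Since $\Hor$ is a morphism of symmetric operads, it then suffices to show
\[
\bigl(\Hor(C_k) \circ (-, \Hor_{S_1}, \ldots, \Hor_{S_k})\bigr)^\sigma \;=\; \Hor_S .
\]
Unfolding $\Hor(C_k) = B_{p,r}^{p_0, p_k} \circ_2 D_{p_0, \ldots, p_k}$ from Proposition \ref{morphismgraphs}, the left-hand side is a nested composition in $\Pro$ of the pairing generators $B^{p',r'}_{p',r'}$ and the horizontal multiplication generators $M_{p',r',t'}$, with each $\Hor_{S_i}$ plugged into the $i$-th slot of the horizontal composite; substitution of planar graphs into one another then produces exactly the picture drawn in Construction \ref{constrGraph} for $S$.

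The main obstacle will be the diagrammatic bookkeeping in this last step: matching inputs, outputs and start values of the subgraphs $\Hor_{S_i}$ with the correct strands of $B_{p,r}^{p_0,p_k}\circ_2 D_{p_0,\ldots,p_k}$, and reading off the composition in $\Pro$ via the associativity relation (3) of Section \ref{subparpro}. Once the diagrammatic comparison is made, the extension to $\Stack[\eta]$ is immediate: the two relations that $\eta$ satisfies correspond, under $\Hor$, to the unit axioms (4)--(5) in $\Pro$ for $\eta_P$ and $\eta_M$, so $\Hor_{(-)}$ indeed descends to the quotient defining $\Stack[\eta]$ and hence to $\boxop$.
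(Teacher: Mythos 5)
Your proposal is correct and follows essentially the same route as the paper: the paper's one-line proof simply observes that Construction \ref{constrGraph} is, by design, the inductive computation of $\Hor\circ\varphi$ on a stacking, and your argument is an explicit unwinding of that observation via the decomposition of Lemma \ref{decompStandard} and the morphism property of $\Hor$. The extra care you take with the base case, the permutation $\sigma$, and the unit relations is compatible with (and a reasonable expansion of) what the paper leaves implicit.
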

\begin{proof}
Construction \ref{constrGraph} is the translation of the composition $\Stack \longrightarrow \boxop \overset{\Hor}{\longrightarrow} \Pro$.
\end{proof}

\subsubsection{$\boxop$ as suboperad of $\Op \times \Pro$}

\begin{theorem}\label{thmboxopSUBoppro}
The morphism
$$ (\V,\Hor): \boxop \longrightarrow \Op \times \Pro $$
 is injective.
\end{theorem}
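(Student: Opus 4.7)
By Theorem \ref{ThmBoxopStackings}, I can identify $\boxop$ with $\Stack[\eta]$ and work directly with stackings. The plan is to show that any stacking $S$ can be reconstructed from the pair $(\V(S), \Hor(S))$, which immediately yields injectivity. The underlying idea is a clean division of labour: $\V$ records the vertical structure together with the $q$-labels of the boxes, while $\Hor$ records the horizontal structure together with the $(p,r)$-labels; jointly they specify enough to pin down an admissible stacking.

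For the first component, I would invoke the proposition preceding this theorem, which identifies $\V(S)$ with the underlying planar tree of $S$. From this tree in $\Op$ one reads off: the set of boxes (as vertices $\{1,\dots,n\}$), the parent-child relations, the left-to-right ordering of siblings, and the middle label $q_i$ of each box (as its vertex arity in $\Op$). For the second component, I would appeal to Construction \ref{constrGraph} (together with the proposition identifying it with $\Hor(S)$): each box $i$ appears as a vertex whose input and output arities are exactly $p_i$ and $r_i$, so the $(p,r)$-labels are directly readable from the vertex arities in the planar graph $\Hor(S) \in \Pro$.

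Combining these two observations, if $(\V(S),\Hor(S)) = (\V(S'),\Hor(S'))$ then $S$ and $S'$ have the same number of boxes, the same planar tree structure, and the same triple $(p_i,q_i,r_i)$ on each box; in particular they live in the same arity. To conclude $S = S'$ as admissible stackings, I would invoke the lemma following Definition \ref{defStacking}, which says that the collection of start values at the leaves is uniquely determined by the underlying tree, the box colours, and the target $(p,q,r)$. The extension to $\Stack[\eta]$ is immediate: the formal unit $\eta$ is sent to $\eta$ under $\V$ and to $\eta_P$ under $\Hor$, and the contraction relations that add or remove units are compatible on both sides.

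The only point that requires any care is verifying that the $(p,r)$-labels really are faithfully recorded by $\Hor(S)$ — a priori one might worry that horizontal adjacencies within the graph could conflate neighbouring boxes — but this is ruled out inductively by Construction \ref{constrGraph}, where each box enters as a distinct vertex of its own prescribed arity $(p,r)$. Beyond that, the argument is essentially bookkeeping, and I do not foresee a real technical obstacle.
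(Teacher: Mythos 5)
Your proposal is correct and follows essentially the same route as the paper: identify elements of $\boxop$ with (unit-extended) stackings via Theorem \ref{ThmBoxopStackings}, read off the underlying tree and the $q_i$ from $\V(S)$, and read off the $(p_i,r_i)$ from the vertex arities of $\Hor(S)$, which together recover the full datum of the stacking. The extra appeals to the start-value uniqueness lemma and to the unit case are harmless refinements of the paper's (terser) argument rather than a different method.
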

\begin{proof}
Consider $(T,G) = (\V(S), \Hor(S)) \in \Op \times \Pro$. First, observe that $T = \V(S)$ is the underlying tree of the stacking $S$, which is thus uniquely determined. The question remains whether the boxes $(p_i, q_i, r_i)$ which are part of the datum of the stacking $S$ are also uniquely determent. Clearly, $q_i$ is determined by $\V(S)$ whereas $p_i$ and $r_i$ are determined by $\Hor(S)$.
\end{proof}
\begin{opm}
Note that the morphism $\V$ is not injective.
\end{opm}

We provide an illustrative example.

\begin{vb}\label{exTreeGraph}
The (admissible) stacking from example \ref{exstacking} has the following vertical composite tree and horizontal composite graph
\begin{equation}\label{verticalhorizontal}
 \V_S = \quad \scalebox{0.85}{$\tikzfig{vertical_composite_tree_example}$} \quad \text{ and } \quad \Hh_S = \quad \scalebox{0.85}{$\tikzfig{horizontal_composite_graph_example}$} 
 \end{equation}
They are part of the coloured symmetric operads $\boxop,\Op$ and $\Pro$ respectively as follows
\begin{itemize}
\item $S \in \boxop((0,2,0),(2,1,3),(1,1,1),(2,0,2),(2,1,1),(1,3,1),(2,1,1),(0,1,1);(3,3,3))$,
\item $\V_S \in \Op(2,1,1,0,1,3,1,1;3)$,
\item $\Hh_S \in \Pro((0,0),(2,3),(1,1),(2,2),(2,1),(1,1),(2,1),(0,1);(3,3))$.
\end{itemize}
\end{vb}

\section{Higher Gerstenhaber brackets} \label{parbrackets}

In the seminal work \cite{gerstenhabervoronov}, Gerstenhaber and Voronov totalise a linear nonsymmetric operad $\ppp$ into a graded $k$-module $\Tot(\ppp)$, placing elements of arity $q$ in degree $q$. Moreover, they show the desuspension $s^{-1}\Tot(\ppp) = \Tot(s^{-1}\ppp)$ to be a brace algebra, including the first Gerstenhaber brace which induces the Gerstenhaber Lie bracket. The authors go on to show that an operad with multiplication carries a homotopy $G$-algebra structure.

The main goal of this section is to develop a similar approach to the higher structure of a linear box operad $\bbb$, that is, a $k$-algebra over the linearised $k\boxop$. However, a problem arises which can be seen directly on the level of symmetric operad $k \boxop$. Contemplating the possibility of having a Gerstenhaber brace between boxes, it becomes clear that in general stacking a box $(p,q,r)$ on top of another box only works in case
\begin{equation}\label{thin}
(p,q,r) = (0,q,0)
\end{equation}
which ensures that the result will again be a box. Boxes satisfying \eqref{thin} are called \emph{thin}, and the thin boxes will play a crucial role in what follows.

After having described the morphism $\mathsf{PreLie} \rightarrow \Op_{\mathrm{d}}$ corresponding to the case of nonsymmetric operads $\ppp$ in \S \ref{subparbracket}, we start by establishing the appropriate totalisation for box operads $\bbb$ in \S \ref{subpartot}. As in the case of nonsymmetric operads, we have to desuspend, but this time we only have to do so in as far as thin boxes are concerned. We thus obtain the totalised symmetric graded operad $\boxop_{\mathrm{td}}$ where the subscript stands for ``thin desuspended''.

Next, in \S \ref{subparlinf} we prove our main result, which is the existence of a morphism of symmetric dg operads
\begin{equation}
        \Linf \rightarrow \boxop_{\td}: l_n \mapsto L_n
\end{equation}
where $\boxop_{\td}$ carries the zero differential (Theorem \ref{Linf_theorem}). The $L_n$ are the anti-symmetrisations of elements $P_n$, where $P_n$ is the summation of all stackings of degree $2-n$ (satisfying some additional technical conditions), with appropriate signs (which are relegated to the Appendix \ref{appsign}).
 
 \subsection{The Gerstenhaber bracket}\label{subparbracket}

Following \cite{gerstenhabervoronov}, for $\ppp$ a linear operad, its desuspended totalisation $\Tot(s\inv\ppp)$ is defined in degree $n$ as 
$$s\inv\Tot(\ppp)^n = \Tot(s\inv\ppp)^n = \ppp(n+1)$$
It carries a $\PreLie$-structure: for $\te \in \ppp(k)$ and $\te'\in \ppp(l)$, the first Gerstenhaber brace is defined as
$$\te \bullet \te'  := \sum_{i=1}^{k} (-1)^{(i-1)(l-1)} \te \circ_i \te'$$
Note it has degree $0$. Its anti-symmetrization constitutes a Lie bracket
$$[\te,\te'] := \te \bullet \te' - (-1)^{|\te||\te'|} \te' \bullet \te $$
called the \emph{Gerstenhaber bracket}.

\subsubsection{The morphism $\PreLie \longrightarrow \Op_d$}
We translate this to the operadic level. In order to define a morphism $\PreLie \longrightarrow \Op_{\dd}$, we first need to compile the linearised $\N$-coloured symmetric operad $k\Op$ into a graded (uncoloured) symmetric operad $\Op_{\dd}$ shifted by the function $\dd(n):= n-1$ (see Appendix \ref{subapptotop}). Explicitly, this is defined as follows: let
$$\Op_{\dd}(n) \subseteq \prod_{( \nth{q};q)\in \N^{n+1}} k\Op(\nth{q};q)$$
be the subspace generated as $k$-module by sequences of constant degree, where an element $x \in \Op(\nth{q};q)$ is placed in degree $q-1 - \sum_{i=1}^n(q_i-1)$. We write $x^{\dd} \in \Op_{\dd}$ and denote its degree by $|x^{\dd}|$. In this case, $\Op_{\dd}$ is concentrated in degree $0$. The composition of $\Op_{\dd}$ is derived from the composition of $\Op$ with appropriate signs: for $x \in \Op(\nth{q};q)$ and $x' \in \Op(\fromto{q'}{m};q_i)$, we set
$$x^{\dd} \circ_i x^{'\dd} := (x \circ_i x')^{\dd}$$
and the $\Ss$-action is given as $(x^{\dd})^\si = (-1)^{ \si(q-1)} (x^\si)^{\dd}$, where the sign is the koszul sign for permuting elements of degree $q_i-1$.

The Gerstenhaber brace is defined using partial compositions, that is, the elements
$$E_i^{k,l} := C_{1,\ldots,l,\ldots,1} \circ (\underbrace{\eta,\ldots,\eta}_{(i-1)\text{-times}}, - ,\underbrace{\eta,\ldots,\eta}_{(k-i)\text{-times}}) \in \Op(k,l;k+l-1).$$

\begin{prop}\label{gerstenhaber}
We have a morphism of operads
$$\PreLie \longrightarrow \Op_{\dd}: p_2 \longmapsto P_2$$
defined as 
$$(P_2)_{k,l} := \sum_{i=1}^n (-1)^{(k-i)(l-1)} (E^{k,l}_i)^{\dd}$$ 
\end{prop}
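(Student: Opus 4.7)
The plan is to verify that the defining pre-Lie relation is satisfied by the element $P_2$ in $\Op_{\dd}$. Recall that $\PreLie$ admits a presentation with a single binary generator $p_2$ (with trivial $\Ss_2$-action) modulo the right pre-Lie relation
$$p_2 \circ_1 p_2 - p_2 \circ_2 p_2 = \left(p_2 \circ_1 p_2 - p_2 \circ_2 p_2\right)^{(23)},$$
where $(23) \in \Ss_3$ is the transposition of the last two inputs. Thus it suffices to verify the analogous identity
$$P_2 \circ_1 P_2 - P_2 \circ_2 P_2 = \left(P_2 \circ_1 P_2 - P_2 \circ_2 P_2\right)^{(23)}$$
in $\Op_{\dd}(3)$, at each triple of input arities $(k,l,m)$. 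Equivariance over $\Ss_2$ on the generator is automatic since $p_2$ and $P_2$ are in degree $0$ and the relevant arity-$2$ action is trivial.

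First, I would expand both composites by unfolding the partial-composition elements $E_i^{a,b}$ in terms of corollas. Pictorially, $(E^{a,b}_i)^{\dd}$ is a two-level tree shaped like $C_a$ with its $i$-th leaf replaced by $C_b$ and all other leaves being $\eta$. Computing $(E_j^{k+l-1,m})^{\dd} \circ_1 (E_i^{k,l})^{\dd}$ amounts to substituting the second tree at the root of the first and simplifying via the corolla associativity relation $C_{\underline{q}^1,\ldots,\underline{q}^n} \circ_1 C_{\underline{m}} = (C_{\sum q^1_j,\ldots,\sum q^n_j} \circ (-, C_{\underline{q}^1},\ldots,C_{\underline{q}^n}))^{\si}$ together with the unit axiom for $\eta$. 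Consequently the double sum indexing $P_2 \circ_1 P_2$ splits into two regimes according to the position of $j$ relative to the length-$l$ block of new leaves produced by the inner insertion: a \emph{nested} regime in which the arity-$m$ input lands inside that block, and a \emph{parallel} regime in which the two insertions occupy disjoint slots of the outer corolla $C_k$. The analogous expansion of $P_2 \circ_2 P_2$ produces only nested terms.

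The heart of the argument is then to match the nested contributions in $P_2 \circ_1 P_2$ with those in $P_2 \circ_2 P_2$ term by term, and to observe that the parallel contributions are manifestly invariant under $(23)$ once one incorporates the Koszul sign for swapping an element of degree $l-1$ past one of degree $m-1$. Three sources of signs need to be reconciled simultaneously: the explicit signs $(-1)^{(k-i)(l-1)}$ in the definition of $P_2$, the signs induced on $\Op_{\dd}$ by operadic composition through the shift $\dd$, and the Koszul signs in the $\Ss_n$-action $(x^{\dd})^{\si} = (-1)^{\si(q-1)}(x^{\si})^{\dd}$. After this bookkeeping, the nested terms cancel between the two sides of the associator, leaving only parallel terms that coincide with their image under $(23)$, which gives the desired relation.

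The main obstacle is precisely this sign reconciliation. The underlying combinatorial identity is the well-known one expressing the pre-Lie nature of the Gerstenhaber brace on an endomorphism operad; however, lifting it to the abstract graded symmetric operad $\Op_{\dd}$ requires carefully unwinding the degree $q-1-\sum_i(q_i-1)$ on each factor, tracking how the shifts accumulate under $\circ_1$ versus $\circ_2$, and checking that the explicit $(-1)^{(k-i)(l-1)}$ weights in $P_2$ interact with these Koszul signs in exactly the way needed for the two cancellations above. Once these signs are aligned, the proposition follows from the standard calculation.
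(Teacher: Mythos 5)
The paper states this proposition without proof (it is the classical Gerstenhaber pre-Lie identity, transported to the compiled operad $\Op_{\dd}$), so there is no in-paper argument to compare against; your outline is the standard verification and is the right approach. Two remarks. First, a small imprecision: the generator $p_2$ of $\PreLie$ does \emph{not} carry the trivial $\Ss_2$-action --- $\PreLie(2)$ is the regular representation $k[\Ss_2]$, spanned by $p_2$ and $p_2^{(12)}$; what is true (and what you actually use) is that no symmetry is imposed on the generator, so a morphism out of $\PreLie$ is determined by the image of $p_2$ and one only has to check the pre-Lie relation. Second, and more substantially, your proposal correctly identifies the sign reconciliation as ``the heart of the argument'' and then does not carry it out: the cancellation of the nested terms between $P_2\circ_1 P_2$ and $P_2\circ_2 P_2$, and the $(23)$-symmetry of the parallel terms up to the Koszul sign $(-1)^{(l-1)(m-1)}$, depend on the interplay of the explicit weights $(-1)^{(k-i)(l-1)}$ with the composition and $\Ss$-action signs of $\Op_{\dd}$ (note in particular that the paper's weight is $(-1)^{(k-i)(l-1)}$ rather than the $(-1)^{(i-1)(l-1)}$ of the brace on $\Tot(s^{-1}\ppp)$, the difference being absorbed by the compilation signs of Appendix \ref{appcompile}). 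Since the entire content of the proposition beyond the classical combinatorics is this sign bookkeeping, a complete proof would have to display that computation rather than assert it.
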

%

Now given a linear operad $\ppp$, we have an induced $\Op_{\dd}$-structure on the desuspended totalization $\prod_{\dd}\ppp = \Tot(s\inv \ppp)$ (see Corollary \ref{algebralift}). 

\begin{prop}\label{propGerstenhaberbracket}
For a linear operad $\ppp$, we have morphisms of operads 
$$ 
\begin{tikzcd}
\PreLie \arrow[r] & \Op_{\dd} \arrow[r] & \End(\Tot(s\inv \ppp))
\end{tikzcd} $$
where for $\te\in \ppp(k)$ and $\te'\in \ppp(l)$ we have
$$P_2^{\dd}(\te^{\dd},\te^{'\dd}) = \sum_{i=1}^k (-1)^{(i-1)(l-1)} (\te \circ_i \te')^{\dd}$$
\end{prop}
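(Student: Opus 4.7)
The proposition asserts the existence of the composite morphism together with an explicit formula, so the plan has two ingredients that are essentially already in hand and one genuine computation to carry out. First, the left-hand morphism $\PreLie \rightarrow \Op_{\dd}$ is exactly Proposition \ref{gerstenhaber}, and the right-hand morphism $\Op_{\dd} \rightarrow \End(\Tot(s^{-1}\ppp))$ is an instance of Corollary \ref{algebralift}, since a linear operad $\ppp$ is by definition a $k\Op$-algebra, and the shift/totalisation machinery in Appendix \ref{subapptotop} promotes such an algebra to an $\Op_{\dd}$-algebra structure on $\prod_{\dd}\ppp = \Tot(s^{-1}\ppp)$. Composing the two gives the sought morphism $\PreLie \rightarrow \End(\Tot(s^{-1}\ppp))$; what remains is to evaluate the image of $p_2$ on a pair $(\theta^{\dd},\theta'^{\dd})$ with $\theta\in \ppp(k)$, $\theta'\in \ppp(l)$.

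For this, I would first unpack the action of the building block $E_i^{k,l} \in \Op(k,l;k+l-1)$. By its very definition
$$E_i^{k,l} = C_{1,\ldots,l,\ldots,1} \circ (\eta,\ldots,\eta,-,\eta,\ldots,\eta)$$
and the fact that in the $\Op$-algebra $\ppp$ the generators $C_{\underline{q}}$ act by iterated operadic composition and $\eta$ acts as the operadic unit, one reads off
$$E_i^{k,l}(\theta,\theta') \;=\; \theta \circ_i \theta' \;\in\; \ppp(k+l-1).$$
Summing over $i$ with the signs appearing in $(P_2)_{k,l}$ and applying the $\dd$-shift then produces an expression of the expected shape
$\sum_{i=1}^{k} (\pm)\, (\theta \circ_i \theta')^{\dd}$, and it only remains to identify the signs $(\pm)$.

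The main (and only) real work is therefore the sign bookkeeping induced by the totalisation. The element $\theta^{\dd}$ sits in degree $k-1$ and $\theta'^{\dd}$ in degree $l-1$, so evaluating $(E_i^{k,l})^{\dd}$ on the pair $(\theta^{\dd},\theta'^{\dd})$ produces a Koszul sign coming from passing $\theta'^{\dd}$ past the $\eta$'s located before position $i$, together with the sign prescribed by the $\dd$-shift in the definition of composition in $\Op_{\dd}$. I would carefully track these signs using the conventions of Appendix \ref{subapptotop}, where $x^{\dd}\circ_i x'^{\dd} = (x\circ_i x')^{\dd}$ up to the sign implicit in the shifted composition. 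The prefactor $(-1)^{(k-i)(l-1)}$ present in $(P_2)_{k,l}$ combines with this Koszul correction to yield exactly $(-1)^{(i-1)(l-1)}$, matching the classical Gerstenhaber sign. This sign reconciliation is routine but is the step where one must be meticulous; once it is established, the displayed formula follows, and since $P_2^{\dd}$ is the image of $p_2$, this simultaneously verifies that the composite morphism recovers the Gerstenhaber brace on $\Tot(s^{-1}\ppp)$.
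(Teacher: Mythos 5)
Your proposal is correct and follows the same route the paper takes: the paper states this proposition without a separate proof, as an immediate consequence of Proposition \ref{gerstenhaber}, Corollary \ref{algebralift}, and the sign formula of Lemma \ref{signoperationexplicit}. One small correction to your sign bookkeeping: the correction factor $(-1)^{(k-1)(l-1)}$ does not come from passing $\te^{'\dd}$ past the $\eta$'s (those occupy colour $1$, hence shifted degree $\dd(1)=0$, and the overall shift $\dd(k+l-1)-\dd(k)-\dd(l)=0$ kills the other terms), but from the Koszul term $|\te^{\dd}|\cdot\dd(l)=(k-1)(l-1)$ in Lemma \ref{signoperationexplicit}; since $(k-i)(l-1)+(k-1)(l-1)\equiv(i-1)(l-1)\pmod 2$, the stated formula indeed follows.
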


\subsection{The complex $\Tot(s_{\thin}\inv \bbb)$}\label{subpartot}

A natural extension of the Gerstenhaber brace for box operads would require an analogue of the partial composition. Unfortunately, for two boxes $\te \in \bbb(p,q,r)$ and $\te' \in \bbb(p',q',r')$, the tree stacking $\te'$ at the $i$-th input of $\te$ is not part of $\boxop$
$$ \scalebox{0.7}{$\tikzfig{partial_comp_problem}$} $$
However, if $(p,r)=(0,0)$, the stacking 
$$ \scalebox{0.7}{$\tikzfig{partial_comp_new}$} $$
makes sense. We employ this observation to remedy the situation.

\subsubsection{Thin boxes}\label{thinrectangles}

A box of size $(p,q,r)$ is \emph{thin} if $p=r=0$. We draw a thin box as a box with degenerate sides
$$\scalebox{0.7}{$\tikzfig{GSrectangle_flat}$}$$
Note in particular that for a thin box $\te'$, we have a partial composition $\te\circ_i \te'$ defined as
$$\scalebox{0.8}{$\tikzfig{partial_comp_new}$} \quad := \quad \scalebox{0.8}{$\tikzfig{partial_comp_units_new}$}$$
\begin{opm}
Note that the subcollection of thin boxes $(\bbb(0,q,0))_{q\geq 0}$ of a box operad forms an operad. Reversely, an operad can be considered as a box operad concentrated in thin sizes.
\end{opm}

A box of size $(p,q,r)$ is \emph{semi-thin} if $(p,r)\neq (0,0)$ and $p=0$ or $r=0$, that is, boxes of the form
$$ \scalebox{0.8}{$\tikzfig{leftsemithin}$} \quad \text{ or } \quad \scalebox{0.8}{$\tikzfig{rightsemithin}$}$$
A box is \emph{non-thin} if it is neither thin, nor semi-thin.

\subsubsection{The complex $\Tot(s_{\thin}\inv\bbb)$}

Let $M$ be a collection $(M(p,q,r))_{p,q,r\in \N}$ of $k$-modules, then we associate a totalized graded $k$-module $\Tot(M)$ which is defined in degree $n$ as
 $$\Tot(M)_n := \prod_{ (p-r) + q =n } M(p,q,r)$$
Similar to the Hochschild object, we aim to desuspend. However, for box operads we do so solely for thin boxes.

The \emph{thin desuspension} of $M$ is then defined as 
 $$s_{\thin}\inv M(p,q,r)_{k} := \begin{cases}  M(0,q,0)_{k+1} & \text{ if } (p,r)=(0,0) \\ M(p,q,r)_k &\text{ otherwise } \end{cases}$$
Note that the desuspended Hochschild object is contained in $\Tot(s_{\thin}\inv\bbb)$, yet the graded $k$-modules $s\inv \Tot(\bbb)$ and $\Tot(s_{\thin}\inv \bbb)$ no longer coincide.

\subsubsection{The operad $\boxop_{\td}$}

First, we define the function 
$$\td: \N^3 \longrightarrow \Z: \td(p,q,r) := \begin{cases} p-r+q & \text{ if } (p,r) \neq (0,0) \\
q-1 & \text{ otherwise } \end{cases}$$
which stands for \emph{thin desuspended}.

Next, we compile the linearised $\N^3$-coloured operad $k \boxop$ and obtain a graded (uncoloured) operad $\boxop_{\td}$ shifted by $\td$ (see Appendix \ref{subapptotop}). Explicitly, it is defined as 
$$\boxop_{\td}(n) \sub \prod_{\substack{\nth{q},q \\ \nth{p},p \\ \nth{r},r }} k\boxop\left( (p_1,q_1,r_1),\ldots,(p_n,q_n,r_n); (p,q,r) \right)$$
the subspace generated as $k$-module by sequences of constant degree, where a stacking 
$$S \in \boxop((p_1,q_1,r_1),\ldots,(p_n,q_n,r_n);(p,q,r))$$
 is placed in degree
  $$|S^{\td}|:= \td(p,q,r) - \sum_{i=1}^n\td(p_i,q_i,r_i).$$
  We call this the \emph{thin desuspended degree}. We write $S^{\td} \in \boxop_{\td}$. The composition of $\boxop_{\td}$ is derived from the composition of $\boxop$ with appropriate signs: for $S' \in \boxop$ composable, we set
$$S^{\td} \circ_i S^{'\td} = (-1)^{ |S^{'\td}| \sum_{j<i}\td(p_j,q_j,r_j)} (S \circ_i S')^{\td}$$
and the $\Ss$-action is given as $(S^{\td})^\si = (-1)^{ \si(\td(p,q,r))} (S^\si)^{\td}$, where the sign is the koszul sign for permuting elements of degree $\td(p_i,q_i,r_i)$.
\begin{lemma}
For a stacking $S \in \boxop$ not containing semi-thin boxes, we have
\begin{equation} \label{degreeRectangle}
 |S^{\td}| = \begin{cases} (1+\text{\# number of thin boxes}) - n &  \text{ if } (p,r) \neq (0,0) \\ 0 & \text{ if }  (p,r)=(0,0) \end{cases}
 \end{equation}
\end{lemma}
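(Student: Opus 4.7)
The plan is to reformulate $|S^{\td}|$ in terms of the auxiliary quantity $f(p,q,r) := p-r+q$, which coincides with $\td$ on non-thin boxes and differs from it by $+1$ on thin boxes (where $\td = q-1 = f-1$). Since $S$ has no semi-thin boxes, the identity $\td = f - \mathbf{1}_{(\cdot,\cdot)=(0,0)}$ applies uniformly to every box of $S$ and to its output $(p,q,r)$, so
\[
|S^{\td}| \;=\; \Bigl( f(p,q,r) - \sum_{i=1}^n f(p_i,q_i,r_i) \Bigr) + t - \mathbf{1}_{(p,r)=(0,0)},
\]
where $t$ denotes the number of thin boxes of $S$.

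I would then prove the conservation law
\[
\sum_{i=1}^n f(p_i,q_i,r_i) \;=\; f(p,q,r) + (n-1),
\]
valid for arbitrary stackings, by induction on $n$ via the canonical decomposition $S = C_k \circ (-,S_1,\ldots,S_k)$ of Lemma~\ref{decompStandard}. In this decomposition the root contributes $p-r+k$ (its $q$-value being the number of children), the telescoping differences $p_{i-1}-p_i$ coming from the children's outputs collapse to $p_0-p_k$ and together with $p-r$ recover $f(p+p_0,Q,r+p_k)=f(p,q,r)$, while the induction hypothesis on each $S_i$ supplies an excess $n_i-1$, totalling $1+\sum_i(n_i-1)=n-1$. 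Substituting back yields
\[
|S^{\td}| \;=\; 1 + t - n - \mathbf{1}_{(p,r)=(0,0)},
\]
which is already the claimed formula whenever $(p,r) \neq (0,0)$.

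The remaining case $(p,r)=(0,0)$ reduces to showing $t=n$, i.e., that every box of $S$ is thin. I would establish this by a secondary induction on $n$, using the auxiliary claim: \emph{a stacking with no semi-thin boxes whose output has exactly one of $P',R'$ equal to zero necessarily contains a semi-thin box} (itself proved by induction on the decomposition, tracking the rightmost or leftmost child until a sub-stacking with semi-thin output is encountered). Granting this claim, admissibility in $S = C_k \circ (-,S_1,\ldots,S_k)$ forces $p = p_0 = 0$ and $r = p_k = 0$, so the root is thin. If some interior $p_j$ were strictly positive, scanning leftwards through the outputs $(p_{i-1},q_i,p_i)$ of the $S_i$ one would reach, at the latest at $i=1$ (where $p_0=0$), an index with $p_{i-1}=0$ and $p_i>0$; the auxiliary claim would then produce a semi-thin box inside the corresponding $S_i$, contradicting the hypothesis. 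Hence every $S_i$ has thin output and, by the induction hypothesis on $n$, all its boxes are thin. The auxiliary claim is the main obstacle, as it requires a careful propagation of the admissibility constraints from the outer boundary of a stacking down to the thin/non-thin/semi-thin nature of its individual boxes.
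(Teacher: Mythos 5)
Your argument is correct and in substance the same as the paper's: your conservation law $\sum_i f(p_i,q_i,r_i) = f(p,q,r) + (n-1)$ is precisely the sum of the two identities $q-1=\sum_{i}(q_i-1)$ and $p-r=\sum_{i}(p_i-r_i)$ that the paper invokes (recorded earlier for full trees and for planar graphs, and transported to stackings via $\V$ and $\Hor$), and your reduction of the case $(p,r)=(0,0)$ to ``every box is thin'' is exactly the equivalence asserted in the first sentence of the paper's proof. The added value of your write-up is that you actually prove both ingredients rather than cite them: the conservation law by induction on the decomposition of Lemma~\ref{decompStandard}, and the thin-output equivalence via the auxiliary claim that a stacking without semi-thin boxes cannot have a semi-thin output, which is indeed provable by the leftmost-transition argument you sketch (the root forces $p=p_0=r=p_k=0$, and any interior $p_j>0$ produces a first index $i$ with $p_{i-1}=0<p_i$, hence a sub-stacking with semi-thin output). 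One bookkeeping slip in the induction step: the root corolla $C_k$ contributes excess $k$, not $1$ --- this $k$ is exactly the one appearing in the root's value $p-r+k$ --- so the total excess is $k+\sum_i(n_i-1)=n-1$ rather than $1+\sum_i(n_i-1)$; the stated conclusion is unaffected.
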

\begin{proof}
Assume $S$ does not contain semi-thin boxes, then the resulting box of $S$ is thin if and only if $S$ consists solely of thin boxes. Hence, the formula is a direct consequence of $q-1= \sum_{i=1}^n (q_i-1)$ and $p-r= \sum_{i=1}^n (p_i-r_i)$ for a stacking $S \in \boxop((p_1,q_1,r_1),\ldots,(p_n,q_n,r_n;(p,q,r))$.
\end{proof}
 A consequence of the sign choice is that for $\bbb$ a box operad, we have an induced $\boxop_{\td}$-algebra structure on $\Tot(s_{\td}\inv \bbb)$ (Corollary \ref{algebralift}). Note that, for $\te_i \in \bbb(p_i,q_i,r_i)$, we have 
\begin{equation}
 S^{\td}(\te_1^{\td},\ldots,\te_n^{\td}) := (-1)^{ \frac{|S^{\td}|(|S^{\td}|+1)}{2} + |S^{\td}|\td(p,q,r) + \sum_{i=1}^n |\te_i^{\td}| \sum_{j>i}\td(p_j,q_j,r_j)  } S(\te_1,\ldots,\te_n)^{\td} \label{signsStacking}
\end{equation}

\subsection{The morphism $\Linf \longrightarrow \boxop_{\td}$}\label{subparlinf}

For a box operad, we need a replacement of the partial compositions $E^{k,l}_i \in \Op$. As these trees compose two elements into one, we call them quadratic. Similarly for box operads, we focus on thin boxes and consider \emph{thin-quadratic} stackings. We use these in order to define an appropriate $\Linf$-structure on $\Tot(s_{\td}\inv \bbb)$ expressed by a morphism $\Linf \longrightarrow \boxop_{\td}$. Moreover, in analogy with the Gerstenhaber bracket, they are anti-symmetrisations of operations $P_n \in \boxop_{\td}$ and thus constitute an $\Linf$-admissible structure (Definition \ref{Linfadm}). 

\subsubsection{$\Linf$-admissible algebras}

Recall that a Lie-admissible algebra satisfies the minimal axioms in order for its anti-symmetrisation to be a Lie algebra. Similarly, an $\Linf$-admissible algebra has a sequence of operations satisfying the minimal axioms such that its anti-symmetrisation constitutes an $\Linf$-algebra.

This is encoded by the following dg-operad.

\begin{mydef}\label{Linfadm}
Let $\Linfadmiss$ be the dg-operad generated by the elements $p_n \in \Linfadmiss(n)$ of degree $2-n$ satisfying the relation
\begin{equation*}
\partial(p_n) = \sum_{\substack{ k+l = n+1 \\ k,l\geq 2}} \sum_{\si \in \Ss_{n}} \sum_{j=1}^{k} (-1)^{(k-1)l + (j-1)(l-1)} (-1)^{\si}\left(P_k \circ_j P_l\right)^{\si} 
 \end{equation*}
for every $n \geq 2$.
\end{mydef} 
Note that the above relation is defined exactly such that 
$$\Linf \longrightarrow \Linfadmiss: l_n \longmapsto \sum_{\si \in \Ss_n} (-1)^{\si} p_n^\si$$
is a morphism of dg-operads.

\subsubsection{Thin-quadratic stackings}

We consider stackings which compose $n$ boxes into a new box such that the number of thin boxes reduces by exactly $1$. We can succinctly express this by saying its degree should be $2-n$ (see \eqref{degreeRectangle}).

In order to obtain an $\Linf$-structure, we add some coherence conditions on the stackings.  
\begin{mydef}
Let $S\in \boxop$ be a stacking of $n$ boxes, then $S$ is \emph{thin-quadratic} if
\begin{enumerate}
\item $S$ has degree $2-n$,
\item $S$ is in standard order,
\item $S$ has a horizontal composite graph $\Hh_S$ with exactly two connected components,
\item $S$ does not contain semi-thin boxes.\label{rightthin}
\end{enumerate}
We write $\boxop^{2-n}_{\mathsf{tq}}(n)$ for the set of thin-quadratic stackings of $n$ boxes.
\end{mydef}
A thin-quadratic stacking $S$ appears in three types:
\begin{itemize}
\item $S$ is \emph{of type $\rom{1}$} if the resulting box is thin. In this case, we obtain the usual partial stackings of $2$ boxes
\begin{equation}\label{type1} \scalebox{0.8}{$\tikzfig{P_2_1}$} \end{equation}
\item  $S$ is \emph{of type $\rom{2}$} if both the resulting box and the bottom box are non-thin. In this case, both connected components consists of a single box, i.e. the partial stackings
\begin{equation}\label{type2} \scalebox{0.7}{$\tikzfig{partial_comp}$}
\end{equation}
\item $S$ is \emph{of type $\rom{3}$} if the resulting box is non-thin and the bottom box is thin. In this case, the stacking possibly has an arbitrary number of boxes and is of the following form
\begin{equation}\label{type3}
 \scalebox{0.8}{$\tikzfig{P_n}$} \end{equation}
where the top layer is horizontally connected.
\end{itemize}
We write respectively 
$\boxop^{2-n}_{\mathsf{tq},\rom{1}}(n)$, $\boxop^{2-n}_{\mathsf{tq},\rom{2}}(n)$, $\boxop^{2-n}_{\mathsf{tq},\rom{3}}(n)$.


\subsubsection{Signs}

In order to define the higher Gerstenhaber braces in \S \ref{parhighergerst}, we require a sign $(-1)^{S^{\td}}$ for every stacking $S \in \boxop$. We relegate this technicality to Appendix \ref{appsign} (Definition \ref{signStacking}), in order not to obscure the main result. 

For a thin-quadratic stacking $S \in \boxop((p,q_1,r),(0,q_2,0);(p,q_1+q_2-1,r))$ of type $\rom{1}$ and $\rom{2}$ where the top box is plugged at input $i$ of the bottom box, the corresponding sign is  
\begin{equation}
(-1)^{(q_1-i+p-r)(q_2-1)} \label{signsQuadraticThin}
\end{equation}

For thin-quadratic stackings of type $\rom{3}$, we do not have a closed formula.

\subsubsection{Higher Gerstenhaber braces $P_n$}\label{parhighergerst}


\begin{mydef}
For $n\geq 2$, we define the \emph{$n$-Gerstenhaber brace} $P_n \in \boxop_{\td}(n)$ as
$$P_n := \sum_{S \in \Thin^2(n)} (-1)^{S^{\td}} S^{\td} $$
which has degree $2-n$. We define the \emph{$n$-Gerstenhaber bracket} $L_n$ as their anti-symmetrisation, i.e.
$$L_n := \sum_{\si \in \Ss_n} (-1)^\si L_n^\si.$$
\end{mydef}

 \begin{theorem}\label{Linf_theorem}
 We have a morphism of dg-operads 
 $$\Linfadmiss \longrightarrow \boxop_{\td} : p_n \longmapsto P_n$$
 where $\boxop_{\td}$ carries the zero differential.
 \end{theorem}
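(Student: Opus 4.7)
The plan is as follows. Since $\boxop_{\td}$ carries the zero differential, establishing that the assignment $p_n \mapsto P_n$ extends to a morphism of dg-operads $\Linfadmiss \to \boxop_{\td}$ reduces to checking that the image of the defining relation of $\Linfadmiss$ holds identically in $\boxop_{\td}$:
\begin{equation*}
R_n := \sum_{\substack{k+l=n+1 \\ k,l \geq 2}} \sum_{\sigma \in \Ss_n} \sum_{j=1}^{k} (-1)^{(k-1)l + (j-1)(l-1)} (-1)^{\sigma} (P_k \circ_j P_l)^{\sigma} \;=\; 0
\end{equation*}
in $\boxop_{\td}(n)$, for every $n \geq 2$. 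The degree bookkeeping is already in place: each $P_m$ has thin-desuspended degree $2-m$, so every term of $R_n$ lies in degree $3-n$, matching $|\partial p_n|$.

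We will then expand each $P_m$ via its definition as a signed sum over thin-quadratic stackings, turning $R_n$ into a signed sum of elements $(S_1 \circ_j S_2)^{\sigma}$ with $S_1$ of size $k$, $S_2$ of size $l$, both thin-quadratic. By Theorem \ref{thmboxopSUBoppro}, two such substituted stackings coincide if and only if their vertical trees and horizontal graphs agree, so we may group the terms of $R_n$ by the underlying composite stacking $T$ of $n$ boxes and by the permutation $\sigma$. Since $\sigma$ acts only by global relabelling, it suffices to show that for each composite $T$ (with trivial $\sigma$), the sum of signs over all decompositions $T = S_1 \circ_j S_2$ into a pair of thin-quadratic stackings vanishes.

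The key combinatorial input is that every composite $T$ arising this way admits exactly two canonical splittings of the form $T = S_1 \circ_j S_2 = S_1' \circ_{j'} S_2'$, corresponding to the two possible orders in which the two ``thin-merging sites'' of $T$ may be collapsed. The strategy is then, case-by-case according to the types (\rom{1}, \rom{2}, \rom{3}) of the pair $(S_1, S_2)$, to verify that the combined sign---comprising the combinatorial prefactor $(-1)^{(k-1)l + (j-1)(l-1)}$, the thin-desuspension signs $(-1)^{S_1^{\td}}$ and $(-1)^{S_2^{\td}}$ from Appendix \ref{appsign}, and the Koszul sign arising in the composition of $\boxop_{\td}$---is opposite on the two members of each pair. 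For type~\rom{1}--type~\rom{1} and type~\rom{2}--type~\rom{2} composites the computation reduces essentially to the classical Gerstenhaber--Voronov brace cancellation behind Proposition \ref{gerstenhaber}, pulled back along $\V: \boxop \to \Op$ and $\Hor: \boxop \to \Pro$ respectively, so that the explicit sign \eqref{signsQuadraticThin} matches the formula of Proposition \ref{propGerstenhaberbracket}.

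The main obstacle will be the type~\rom{3} cases, because the horizontally connected ``top layer'' of a type~\rom{3} thin-quadratic stacking may spread across many boxes of the lower merge, so that swapping the order of the two merges requires relabelling all of these boxes simultaneously and re-reading the horizontal attachments inside $T$. Checking that the two canonical splittings of a type~\rom{3} composite are well-defined, and that the non-closed-form sign $(-1)^{S^{\td}}$ attached to type~\rom{3} stackings in Appendix \ref{appsign} combines with the Koszul contribution and with \eqref{signsQuadraticThin} to produce the required cancellation, is where the bulk of the technical work will sit. We expect a systematic use of the horizontal composite graph $\Hor_{S}$ from Construction \ref{constrGraph} to be essential in organising this bookkeeping.
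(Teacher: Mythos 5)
Your proposal is correct and follows essentially the same route as the paper: the paper likewise reduces to showing the relation $R_n=0$ in $\boxop_{\td}$, pairs each term $S\circ_i S'$ (for $S,S'$ thin-quadratic) with the unique second decomposition $(\tilde S\circ_j\tilde S')^{\si}$ of the same composite stacking --- split into six cases according to whether the output box is thin, whether the bottom box of $S$ is thin, and whether box $i$ is a leaf --- and delegates the verification that the two combined signs are opposite to Lemma \ref{signsLinf} in Appendix \ref{appsign}. The one imprecision is your reduction to ``trivial $\si$'': since thin-quadratic stackings are required to be in standard order, the two splittings of a given composite are in general related by a nontrivial permutation, so the cancellation pairs a term with $\si=\mathrm{id}$ against one with $\si\neq\mathrm{id}$ (as you in fact acknowledge when discussing the relabelling needed in the type $\rom{3}$ case).
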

 \begin{proof}
 For every $n\geq 3$, we need to show that 
\begin{equation*}\sum_{\substack{ k+l = n+1 \\ k,l\geq 2}} \sum_{\si \in \Ss_{n}} \sum_{j=1}^{k} (-1)^{(k-1)l + (j-1)(l-1)} (-1)^{\si}\left(P_k \circ_j P_l\right)^{\si} = 0 \label{Peq}
 \end{equation*}	
 Note that we also have $(-1)^{(k-1)l + (j-1)(l-1)}  =(-1)^{(k-j)l + j-1}$.

Now given a composable pair of thin-quadratic stackings
\begin{itemize}
\item  $S \in \boxop((p_1,q_1,r_1),\ldots,(p_k,q_k,r_k);(p,q,r))$,
\item  $S' \in \boxop((p'_1,q'_1,r_1'),\ldots,(p'_l,q'_{l},r'_{l});(p_i,q_i,r_i))$,
\end{itemize}
and an index $1 \leq i \leq n$, we show that there exists two unique thin-quadratic stackings 
\begin{itemize}
\item  $\tilde{S} \in \boxop((\tilde{p}_1,\tilde{q}_1,\tilde{r}_1),\ldots,(\tilde{p}_{\tilde{k}},\tilde{q}_{\tilde{k}},\tilde{r}_{\tilde{k}});(p,q,r))$,
\item  $\tilde{S}' \in \boxop((\tilde{p}'_1,\tilde{q}'_1,\tilde{r}'_1),\ldots,(\tilde{p}'_{\tilde{l}},\tilde{q}'_{\tilde{l}},\tilde{r}'_{\tilde{l}});(\tilde{p}_j,\tilde{q}_j,\tilde{r}_j))$,
\end{itemize}
a unique index $1\leq j \leq \tilde{k}$ and a unique permutation $\si \in \Ss$ such that 
$$S \circ_i S' = (\tilde{S} \circ_j \tilde{S}')^\si.$$

We discern six cases. Given that the resulting box is thin, we have two cases. For $s \geq t + q_2'$, we have
\begin{equation}
\label{Linfeq1}
\scalebox{0.8}{$\tikzfig{case1_1}$} \quad \circ_1 \quad  \scalebox{0.8}{$\tikzfig{case1_2}$} \quad = \quad  \scalebox{0.8}{$\tikzfig{case1_3}$} \quad = \quad ( \quad  \scalebox{0.8}{$\tikzfig{case1_4}$} \quad \circ_1 \quad  \scalebox{0.8}{$\tikzfig{case1_5}$} \quad )^{(23)}
\end{equation}

For $ t \leq s < t + q_2'$ we have
 \begin{equation}
 \label{Linfeq2}
 \scalebox{0.8}{$\tikzfig{case1_1}$} \quad \circ_1 \quad  \scalebox{0.8}{$\tikzfig{case1_2}$} \quad = \quad  \scalebox{0.8}{$\tikzfig{case2_1}$} \quad = \quad  \scalebox{0.8}{$\tikzfig{case1_4}$} \quad \circ_2 \quad  \scalebox{0.8}{$\tikzfig{case2_2}$}
 \end{equation}

Now, assume the resulting box is not thin. If the bottom box of $X$ is non-thin, we have two very similar cases as above: 
\begin{align}
\label{Linfeq3} 
  \scalebox{0.8}{$\tikzfig{case3_1}$} \quad \circ_1 \quad  \scalebox{0.8}{$\tikzfig{case3_2}$} \quad &= \quad  \scalebox{0.8}{$\tikzfig{case3_3}$} \quad = \quad ( \quad \scalebox{0.8}{$\tikzfig{case3_4}$} \quad \circ_1 \quad  \scalebox{0.8}{$\tikzfig{case1_5}$} \quad )^{(23)} \\
 \label{Linfeq4}
   \scalebox{0.8}{$\tikzfig{case3_1}$} \quad \circ_1 \quad  \scalebox{0.8}{$\tikzfig{case3_2}$} \quad &= \quad  \scalebox{0.8}{$\tikzfig{case4_1}$} \quad = \quad  \scalebox{0.8}{$\tikzfig{case3_4}$} \quad \circ_2 \quad  \scalebox{0.8}{$\tikzfig{case2_2}$}
\end{align}
for similar $s$ and $t$.

For the remaining terms, in which the bottom box of $S$ is thin, we discern two cases: whether the box $i$ in $S$ has a child or not (or equivalently, whether $i$ is a leaf or not). 

In the first case, we compute 
\begin{equation}
\label{Linfeq5}
\scalebox{0.8}{$\tikzfig{case5_1}$} \quad \circ_i \quad  \scalebox{0.8}{$\tikzfig{case5_2}$} \quad = \quad  \scalebox{0.8}{$\tikzfig{case5_3}$} \quad = \quad ( \quad \scalebox{0.8}{$\tikzfig{case5_4}$} \quad \circ_1 \quad  \scalebox{0.8}{$\tikzfig{case5_5}$} \quad )^{(i+1 \ldots k+1)}
\end{equation}
Note that we had to apply the permutation in order to obtain stackings in standard form. 

In the last case, the box $i$ has a child. Hence, we have 
\begin{equation}
\label{Linfeq6}
\scalebox{0.8}{$\tikzfig{case6_1}$} \quad \circ_i \quad  \scalebox{0.8}{$\tikzfig{case5_2}$} \quad = \quad  \scalebox{0.8}{$\tikzfig{case6_2_fc}$}
\end{equation}
where $j+1$ corresponds to the left-most child of $i+1$ in the composite $S \circ_i S'$. As there are no thin or semi-thin boxes above $i$, the children of $i$ define a unique horizontally connected component above $i$ (denoted by the dashed inner stacking). This constitutes a thin-quadratic stacking, save for it being in standard order. Similarly, if we contract this substacking to a single box, the result defines a thin-quadratic stacking as well, except being in standard order. Hence, we have a unique permutation $\si$ such that
\begin{align*}
    \scalebox{0.8}{$\tikzfig{case6_2_fc}$} \quad = \quad ( \quad \scalebox{0.8}{$\tikzfig{case6_3}$} \quad \circ_j \quad  \scalebox{0.8}{$\tikzfig{case6_4}$} \quad )^{\si}
    \tag{\ref{Linfeq6}}
\end{align*}
where the right two boxes are thin-quadratic (i.e. also in standard order).

By lemma \ref{signsLinf} their corresponding signs in all six cases
$$(-1)^{(k-i)l + i-1 + S^{\td} + S^{'\td} } \text{ and }  (-1)^{(\tilde{k}-j)\tilde{l} + j-1 + \si + \tilde{S}^{\td} + \tilde{S'}^{\td}} $$ 
are opposite, thus finishing the proof.
 \end{proof}

\section{Deformation theory of Lax prestacks}\label{parGS}

In this section we apply our main Theorem \ref{Linf_theorem} to the deformation theory of lax prestacks. Here, a \emph{lax prestack} is a lax functor $\A: \mathcal{U} \rightarrow \mathsf{Cat}(k)$ from a small category $\mathcal{U}$ to the category of small linear categories $\mathsf{Cat}(k)$ (Definition \ref{deflaxprestack}).

We start by recalling  the basic setup for the deformation theory of algebras in \S \ref{subpardefassoc}, which is entirely determined by the Hochschild object $\CC(A)$ of a $k$-module $A$ and a ``multiplication'' $m \in \CC^2(A)$ satisfying the Maurer-Cartan equation.

In \S \ref{subparGS} we start with the datum of a \emph{quiver} $\A$ on $\mathcal{U}$ (Definition \ref{defQuiver}) to which we associate an $\N^3$-graded $k$-module $\CC_{\square}^{p,q,r}(\A)$ which is a natural enlargement of the $\N^2$-graded Gerstenhaber-Schack object $\CC_{GS}^{p,q}(\A)$ from \cite{DVL}. We show that $\CC_{\square}^{p,q,r}(\A)$ has the structure of a box operad, and we call it the \emph{Gerstenhaber-Schack box operad} of $\A$.
Next, we show that $\Tot (s^{-1}\CC_{GS}(\A))$ is an $\Linf$-subalgebra of $\Tot (s^{-1}_{\thin}\CC_{\square}(\A))$ (Proposition \ref{propGSLinf}).

In \S \ref{subparlax} we show that a Maurer-Cartan element $\alpha = (m,f,c) \in \Tot (s^{-1}\CC_{GS}(\A))$ corresponds precisely to a lax prestack structure $(m,f,c)$ on $\A$.
As a consequence, using the general machinery of twisting \cite{merkulov2000} \cite{dotsenko_shadrin_vallette_2023} (over $k$ a field of characteristic $0$), we obtain that the deformation theory of a (lax) prestack $(\A, \alpha)$ is governed by the $\alpha$-twisted $\Linf$-algebra $\Tot_{\alpha} (s^{-1}\CC_{GS}(\A))$.

\subsection{Deformation theory of associative algebras}
\label{subpardefassoc}
For $V$ a $k$-module, let $\End(V)$ be its endomorphism operad. The associated \emph{Hochschild object} $\CC(V)$ is defined as the totalisation $\Tot(\End(V))$, that is, its degree $n$ component corresponds to $\End(V)(n)= \Hom(V^{\otimes n},V)$. 

The Gerstenhaber brace equips the desuspended Hochschild object $s\inv \CC(V)$ with a $\mathsf{PreLie}$-structure. An element $m \in s\inv \CC(V)= \CC^2(V)$ is \emph{Maurer-Cartan} if it satisfies the equation 
$$ 0 = m \bullet m$$ 
\begin{prop}
Let $V$ be a $k$-module, an element $m \in \CC(V)^2$ is Maurer-Cartan if and only if $m:V \otimes V \longrightarrow V$ is an associative multiplication on $V$.
\end{prop}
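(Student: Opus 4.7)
The plan is to unfold the definition of the first Gerstenhaber brace $\bullet$ applied to $m$ with itself and observe that the Maurer-Cartan equation $m \bullet m = 0$ is a direct rewriting of the associativity axiom. Under the identification $s^{-1}\Tot(\ppp)^n = \ppp(n+1)$ from \S\ref{subparbracket}, a degree $1$ element of $s^{-1}\CC(V)$ corresponds to an arity-$2$ operation of the endomorphism operad $\End(V)$, i.e.\ to a $k$-bilinear map $m : V \otimes V \to V$. So the first step is to make this identification explicit.

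Next, I would specialise the brace formula $\theta \bullet \theta' = \sum_{i=1}^{k} (-1)^{(i-1)(l-1)} \theta \circ_i \theta'$ to $\theta = \theta' = m$ with $k = l = 2$. The sign simplifies to $(-1)^{i-1}$, yielding
$$m \bullet m = m \circ_1 m - m \circ_2 m \in \End(V)(3).$$
Evaluating the partial compositions in the endomorphism operad at $a \otimes b \otimes c$ gives $(m \circ_1 m)(a,b,c) = m(m(a,b),c)$ and $(m \circ_2 m)(a,b,c) = m(a,m(b,c))$. Hence $m \bullet m$ is the trilinear map $a \otimes b \otimes c \mapsto m(m(a,b),c) - m(a,m(b,c))$, whose vanishing is precisely the associativity identity $m(m(a,b),c) = m(a,m(b,c))$ for all $a,b,c \in V$. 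Both implications are read off from the same computation.

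There is no genuine obstacle: the statement is an immediate corollary of the explicit brace formula and the fact that $\End(V)$ is the operad of multilinear maps on $V$. The only point requiring care is to apply the sign convention consistently, so that the two summands $m\circ_1 m$ and $m\circ_2 m$ combine with opposite signs and produce the usual associator; any different sign normalisation for $\bullet$ in the desuspended conventions would need to be tracked to confirm this.
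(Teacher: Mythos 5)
Your proposal is correct and matches the paper's (implicit) argument: the paper states this proposition without proof, having already indicated in the introduction that $\MC(\alpha)=\alpha(\alpha\otimes 1)-\alpha(1\otimes\alpha)$ expresses associativity, which is exactly your computation $m\bullet m = m\circ_1 m - m\circ_2 m$ with the sign $(-1)^{(i-1)(l-1)}$ specialised to $k=l=2$. Nothing further is needed.
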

In the next corollary, let $k$ be a field of characteristic $0$. 
\begin{cor}
Let $A= (V,m)$ be an associative $k$-algebra, the $m$-twisted $\mathsf{dgLie}$-algebra
$$(s\inv \CC(A),d_m,[-,-])$$
where
$$d_m(\te) := [m, \te] = m \bullet \te - (-1)^{|\te|} \te \bullet m$$
governs the deformations of $A$ as an associative algebra, i.e. 
$$m' \in  \CC(A)^2\text{ is Maurer-Cartan if and only if }A_{m'}:=(V,m+m')\text{ is an associative }k\text{-algebra.}$$ 
\end{cor}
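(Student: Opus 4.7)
The plan is to reduce the corollary to the previous proposition (which identifies Maurer-Cartan elements with associative multiplications) by a standard application of twisting, using that $m$ itself is a Maurer-Cartan element of the untwisted structure.

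First, I would note that by the previous proposition, associativity of $m$ is equivalent to $m \bullet m = 0$; since $m$ has degree $1$ in $s^{-1}\CC(V)$, this is the same as $\tfrac{1}{2}[m,m] = m \bullet m = 0$, i.e.\ $m$ is Maurer-Cartan in the (zero-differential) dgLie $s^{-1}\CC(V)$ produced by Proposition \ref{propGerstenhaberbracket}. This is precisely the hypothesis needed to twist by $m$: one checks that $(s^{-1}\CC(A), d_m, [-,-])$ is again a dgLie algebra, with $d_m^2 = 0$ because Jacobi gives $d_m^2(\theta) = [m,[m,\theta]] = \tfrac{1}{2}[[m,m],\theta] = 0$, and with $d_m$ a graded derivation of the bracket again by Jacobi. (One could also invoke the general twisting machinery cited in the introduction.)

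The main computation is then the direct expansion of the twisted Maurer-Cartan equation. For $m' \in \CC^2(A) = (s^{-1}\CC(A))^1$, the equation reads
\[
d_m(m') + \tfrac{1}{2}[m',m'] = [m,m'] + \tfrac{1}{2}[m',m'].
\]
Using bilinearity and the fact that on degree $1$ elements the bracket is symmetric, I would rewrite the right-hand side as
\[
\tfrac{1}{2}[m+m',m+m'] - \tfrac{1}{2}[m,m],
\]
and then invoke $[m,m] = 0$ to conclude that $m'$ is Maurer-Cartan in the twisted structure if and only if $m+m'$ is Maurer-Cartan in the untwisted one.

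Finally, applying the previous proposition once more, the latter condition is equivalent to $m+m'$ defining an associative multiplication on $V$, i.e.\ to $A_{m'} = (V, m+m')$ being an associative $k$-algebra. I do not foresee a real obstacle here: the proof is entirely formal once the untwisted MC = associativity correspondence is in hand, and the only arithmetic subtleties are the sign and degree bookkeeping in the Lie bracket on degree $1$ elements, which is why characteristic zero suffices (via the standard twisting formalism of \cite{merkulov2000,dotsenko_shadrin_vallette_2023}).
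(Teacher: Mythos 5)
Your proof is correct and is exactly the standard twisting argument the paper intends (the corollary is stated without proof, with the general twisting machinery of Merkulov and Dotsenko--Shadrin--Vallette cited in its place): reduce the twisted Maurer--Cartan equation for $m'$ to the untwisted one for $m+m'$ and apply the preceding proposition. As a minor remark, the identity $d_m(m') + m'\bullet m' = (m+m')\bullet(m+m') - m\bullet m$ holds verbatim for the pre-Lie product on degree $1$ elements, so the factor $\tfrac{1}{2}$ (and hence any appeal to characteristic $0$ for this particular step) can be avoided entirely.
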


\subsection{The Gerstenhaber-Schack box operad}
\label{subparGS}
Let $\uuu$ be a small category.
\begin{mydef}\label{defQuiver}
A \emph{quiver $\A$ on $\uuu$} consists of the following
\begin{itemize}
\item for every object $U \in \uuu$ a quiver $\A(U)$, consisting of a set of objects $\Ob(\A(U))$ and $k$-modules of arrows $\A(U)(A,A')$ for every two objects $A,A'$,
\item for every morphism $u:V \longrightarrow U$ in $\uuu$, a function between the respective sets of objects $u\st:\Ob(\A(U)) \longrightarrow \Ob(\A(V))$. 
\end{itemize}
\end{mydef}

\medskip

\noindent \emph{Notations.}
Let $\si = (U_{0} \overset{u_{1}}{\rightarrow} U_{1} \rightarrow  \ldots   \overset{u_{p}}{\rightarrow}  U_{p} )$ be a $p$-simplex in the category $\uuu$, then we have an induced function $\Ob(\A(U_{p})) \longrightarrow \Ob(\A(U_{0}))$ defined as
$$\si\st := u_1 \st \circ \ldots \circ u_p \st$$
Note that for a $0$-simplex $\si=U$, we set $\si\st$ as the identity function $\Id_{\Ob(\A(U))}$.
Let $\si_{\leq t}:= (u_1,\ldots,u_t)$ be the first $t$ arrows of $\si$, and $\si_{>t} := (u_{p-t+1},\ldots,u_p)$ the last $t$ arrows of $\si$.

Let $\Delta_{f}^{inj} \subset \Delta$ be the subcategory of the simplex category containing the strictly increasing order morphisms preserving the end points. It acts on the nerve $N(\uuu)$, that is, for $\si$ a $p$-simplex in $\uuu$ and $f:[r] \hookrightarrow [p]$ a morphism in $\Delta^{inj}_f$, its image $f(\si)$ is a $r$-simplex in $\uuu$.
\medskip 
 
\begin{mydef}
For $\A$ a quiver over $\uuu$, and $p,q,r \geq 0$, we define a $k$-module $\CC_\square^{p,q,r}(\A)$. \newline For $p\geq r$, define
$$\CC_\square^{p,q,r}(\A) := \prod_{\substack{ \si:V \rightarrow U \; p\text{-simplex} \\ f\in \Delta^{inj}_f(r,p)}}\prod_{A_0,\ldots,A_q \in \A(U)} \Hom( \bigotimes_{i=1}^{q} \A(U)(A_{i-1},A_{i}), \A(V)(\si\st A_0, f(\si) \st A_q) ) .$$
For $p<r$, define $\CC_\square^{p,q,r}(\A) = 0$.
\end{mydef}

An element $\te \in \CC_\square^{p,q,r}(\A)$ can naturally be drawn as a box
$$\scalebox{0.8}{$\tikzfig{ExtendedGSElements}$}$$
Hence, $\CC_\square^{p,q,r}(\A)$ will have a box operad structure, which we now make explicit.

As an auxiliary intermediate step, we define a horizontal composition of elements in $\CC_\square^{\bu,\bu,\bu}(\A)$ by tensoring over $\A$. Indeed, consider $\te \in \CC_\square^{p,q,r}(\A)$ and $\te' \in \CC_\square^{r,q',t}(\A)$, then we define a set of elements $\te \otimes_{\A} \te'$. Let $\si \in N_p(\uuu)$, $f\in \Delta_f^{inj}(t,p)$ and $A=(A_0,\ldots,A_{q+q'})$, then we define
$$(\te \otimes_\A \te')^{\si,f}_A :=  \sum_{\substack{  f_1 \in \Delta_f^{inj}(r,p) \\  f_2 \in \Delta_f^{inj}(t,r) \\ f= f_1\circ f_2}} \te^{\si,f_1}_{(A_0,\ldots,A_q)} \otimes \te^{'f_1(\si),f_2}_{(A_{q+1},\ldots,A_{q+q'})}$$
pictured as
$$ \scalebox{0.8}{$\tikzfig{horizontalCompositionGS}$} $$ 
Note that this set of elements is not a part of $\CC_\square^{\bu,\bu,\bu}(\A)$. 
\begin{prop}
$\CC_\square^{\bu,\bu,\bu}(\A)$ comes equipped with a box operad structure:
\begin{itemize}
\item the composition map
$$\circ: \mathrm{\CC}_\square^{p,n,r}(\A) \otimes \CC_\square^{p_0,q_1,p_1}(\A) \otimes \ldots \otimes \CC_\square^{p_{n-1},q_n,p_n}(\A) \longrightarrow \CC_\square^{p+p_0,\sum_{i=1}^n q_i, r+p_n}(\A)$$
 composes morphisms of quivers after tensoring over $\A$, i.e. \newline
for respective elements $\te,\te_1,\ldots,\te_n \in \CC_\square^{\bu,\bu,\bu}(\A)$, a simplex $\si \in N_{p+p_0}(\A)$ and a function $f\in \Delta_f^{inj}(r+p_n,p+p_0)$ such that $f(r) = p$, we obtain
$$(\te \circ (\te_1,\ldots,\te_n))^{\si,f} := \te^{ \si_{<p}, f_{\leq r}} \circ ( \te_1 \otimes_{\A} \ldots \otimes_{\A} \te_n)^{\si_{>p},f_>r}$$
pictured as
$$\scalebox{0.75}{$\tikzfig{boxoperadGS}$}$$
where $f_{\leq r} $ and $f_{>r}$ are the restrictions of $f$ to respectively $[r]\sub [r+p_0]$ and $[r+p_0]\setminus [r]$. Otherwise, the composition is zero.
\item the identity map $\Id \in \CC_\square^{0,1,0}(\A)$ is the unit.
\end{itemize} 
We call $\CC_\square^{\bu,\bu,\bu}(\A)$ the \emph{Gerstenhaber-Schack box operad}.
\end{prop}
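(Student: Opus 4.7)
The plan is to verify the three requirements for a box operad (Definition \ref{defboxoperad}): that the composition has the announced arity, that the unit axioms hold, and that the associativity axiom holds.

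First I would check that the composition lands in $\CC_\square^{p+p_0,\sum_i q_i,r+p_n}(\A)$. The key is that a simplex $\si \in N_{p+p_0}(\uuu)$ splits canonically into its first $p$ arrows and its last $p_0$ arrows, while the condition $f(r)=p$ on $f \in \Delta_f^{\mathrm{inj}}(r+p_n,p+p_0)$ forces it to split into $f_{\leq r} \in \Delta_f^{\mathrm{inj}}(r,p)$ and a tail $f_{>r} \in \Delta_f^{\mathrm{inj}}(p_n,p_0)$. Under this splitting, the target of $\te^{\si_{\leq p},f_{\leq r}}$ matches the source of the horizontal tensor $\te_1 \otimes_\A \cdots \otimes_\A \te_n$ evaluated on the tail data, so the vertical composition in the $\Hom$-spaces of $\A$ is well defined and lands in the expected component. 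Unitality is immediate: the identity $\Id \in \CC_\square^{0,1,0}(\A)$ is supported on $0$-simplices and acts as the identity on each $\A(U)(A_0,A_1)$, so plugging $\Id$ into either slot of the composition formula leaves the result unchanged.

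The main work and the main obstacle is associativity. My plan is to evaluate both sides of the box-operad associativity axiom at a simplex $\si$ of the prescribed total length and at a function $f$ with the prescribed pinnings, and to reduce the equality to two clean structural observations: \emph{(i)} the horizontal tensor $\otimes_\A$ is associative, in the sense that iterated tensor-over-$\A$ constructions flatten canonically; and \emph{(ii)} summing over all decompositions of a strictly increasing endpoint-preserving injection $f$ into a fixed number of consecutive pieces is itself associative, so that iterated decompositions $f = f_1 \circ (f_2 \circ \cdots \circ f_k)$ and $f = (f_1 \circ \cdots \circ f_{k-1}) \circ f_k$ collapse to the same set of flat decompositions $f = f_1 \circ \cdots \circ f_k$.

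Granting (i) and (ii), both sides of the associativity axiom expand to the same double sum, in which $\te$ is composed vertically with a horizontal $\otimes_\A$-tensor of compositions of $\te_i$'s with further horizontal $\otimes_\A$-tensors of the $\te_j^i$'s, summed over all compatible splittings of $\si$ and $f$. The bookkeeping is the main subtlety: the pinning condition $f(r)=p$ appears at every level of composition, and one must verify that the pinnings at the nested levels are jointly compatible with the outermost pinning. This compatibility is precisely the admissibility of stackings encoded in Definition \ref{defStacking}; under the correspondence of Theorem \ref{ThmBoxopStackings} the identification of the two sides of the axiom then becomes a term-by-term matching of tensor products of vertical compositions in $\A$, indexed by admissible stackings of the underlying boxes.
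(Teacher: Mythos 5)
Your plan is correct and is essentially the paper's own argument: the paper disposes of the proposition with the single line that ``the associativity of the composition is readily verified by the drawings,'' and your steps (the splitting of $\si$ and of $f$ at the pinning $f(r)=p$, unitality, associativity of $\otimes_\A$ together with associativity of the sum over decompositions of $f$, and the resulting interchange between horizontal tensoring and vertical composition) are exactly what that verification amounts to. The only remark worth making is that your closing appeal to admissible stackings and Theorem \ref{ThmBoxopStackings} is unnecessary: one only needs to check the algebra axioms of Definition \ref{defboxoperad} directly, and the combinatorial description of $\boxop$ plays no logical role here.
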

\begin{proof}
The associativity of the composition is readily verified by the drawings.
\end{proof}

\begin{cor}
For $\A$ a quiver on $\uuu$, $\Tot(s_{\thin}\inv \CC_\square^{\bu,\bu,\bu}(\A))$ is an $\Linf$-admissible algebra.
\end{cor}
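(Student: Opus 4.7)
The proof is a short, purely formal composition of structures that have already been put in place. The plan is to chain together three morphisms of (suitably coloured and/or graded) symmetric operads, ending at the endomorphism operad of $\Tot(s_{\thin}\inv \CC_\square^{\bu,\bu,\bu}(\A))$.

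First, I would invoke the proposition immediately preceding the corollary, which provides $\CC_\square^{\bu,\bu,\bu}(\A)$ with the structure of a box operad. Translated into operadic language, this is a morphism of $\N^3$-coloured symmetric operads
\[
k\boxop \longrightarrow \End\bigl(\CC_\square^{\bu,\bu,\bu}(\A)\bigr),
\]
where the right-hand side is the $\N^3$-coloured endomorphism operad associated to the collection $(\CC_\square^{p,q,r}(\A))_{p,q,r\in \N}$.

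Second, I would apply the general totalisation/desuspension principle established in Section 3, specifically Corollary \ref{algebralift}, which upgrades any $k\boxop$-algebra structure on a triply-graded collection $M$ into a $\boxop_{\td}$-algebra structure on $\Tot(s_{\thin}\inv M)$ with the sign conventions fixed by the function $\td$ and formula \eqref{signsStacking}. Applying this to $M = \CC_\square^{\bu,\bu,\bu}(\A)$ yields a morphism of graded symmetric operads
\[
\boxop_{\td} \longrightarrow \End\bigl(\Tot(s_{\thin}\inv \CC_\square^{\bu,\bu,\bu}(\A))\bigr),
\]
with zero differential on the target.

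Third, I would precompose with the morphism of dg operads $\Linfadmiss \to \boxop_{\td}$ provided by the main Theorem \ref{Linf_theorem}, which sends $p_n$ to $P_n$. The composite is a morphism of dg operads
\[
\Linfadmiss \longrightarrow \End\bigl(\Tot(s_{\thin}\inv \CC_\square^{\bu,\bu,\bu}(\A))\bigr),
\]
which is by definition an $\Linf$-admissible algebra structure on $\Tot(s_{\thin}\inv \CC_\square^{\bu,\bu,\bu}(\A))$, as required. There is essentially no obstacle to overcome: all the substantive work (the combinatorial analysis of thin-quadratic stackings, the sign computations encoded in \eqref{signsStacking} and \eqref{signsQuadraticThin}, and the verification of the $\Linf$-admissibility relations) has already been carried out in the proof of Theorem \ref{Linf_theorem}; the corollary is merely the specialisation to the particular box operad $\CC_\square^{\bu,\bu,\bu}(\A)$.
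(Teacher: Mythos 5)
Your proposal is correct and is exactly the argument the paper intends (the corollary is left without an explicit proof precisely because it is the composite of the preceding proposition, Corollary \ref{algebralift}, and Theorem \ref{Linf_theorem}, in the order you describe). The only point you leave tacit, as does the paper, is the finiteness condition of Lemma \ref{compilatenwelldefined} needed for the totalised algebra map to be well defined over the product; this holds here since for fixed input boxes and a fixed output colour only finitely many stackings contribute to each $P_n$.
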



%
%

\subsubsection{The Gerstenhaber-Schack object}

The components $\CC_{GS}^{\bullet,\bullet}(\A)$ of the Gerstenhaber-Schack object $\CGS(\A)$ \cite{DVL} correspond to specific components of the GS box operad. Indeed, for $q\geq 0$ we have
$$\CC^{0,q}_{GS}(\A) = \CC_{\square}^{0,q,0}(\A)$$
and for $p\geq 1$ we have
$$\CC^{p,q}_{GS}(\A) = \CC_{\square}^{p,q,1}(\A).$$ 
Explicitly, we obtain
$$\CC_{GS}^{p,q}(\A) = \prod_{\substack{ \si:V \rightarrow U \;\\ p\text{-simplex}}}\prod_{A_0,\ldots,A_q \in \A(U)} \Hom( \bigotimes_{i=1}^{q} \A(U)(A_{i-1},A_{i}), \A(V)(\si\hs A_0, |\si| \hs A_q) ) $$
where $|-|: N(\uuu) \longrightarrow \uuu$ composes the simplex in the category $\uuu$. The GS object is then the totalised graded $k$-module
$$\CGS^n(\A) = \bigoplus_{n=p+q}\CC^{p,q}(\A)$$

As a result, an element of $\CGS^{p,q}(\A)$ corresponds to a box of a specific arity $(p,q,\min \{1,p\})$. We call box of this arity a \emph{GS box}.
\begin{prop}\label{propGSLinf}
$s\inv \CGS(\A)$ is an $\Linf$-admissible subalgebra of $\Tot(s_{\td}\inv \CC_\square^{\bu,\bu,\bu}(\A))$.
\end{prop}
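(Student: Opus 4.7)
My plan is to show closure of the higher Gerstenhaber braces $P_n$ on the GS subspace; closure of their antisymmetrisations $L_n$ then follows automatically, yielding the $\Linf$-admissible substructure. First I would check that the identifications $\CC^{0,q}_{GS}(\A) = \CC_\square^{0,q,0}(\A)$ and $\CC^{p,q}_{GS}(\A) = \CC_\square^{p,q,1}(\A)$ for $p \geq 1$ give a degree-preserving graded inclusion $s\inv\CGS(\A) \hookrightarrow \Tot(s\inv_{\thin}\CC_\square^{\bu,\bu,\bu}(\A))$. Indeed, the thin-desuspended degree $\td(p, q, \min\{1, p\})$ equals $q - 1$ when $p = 0$ and $p + q - 1$ when $p \geq 1$, matching in both cases the degree in $s\inv\CGS$.

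The key step is to show that each $P_n$ sends inputs of GS arity to an output of GS arity. Since $P_n = \sum_S (-1)^{S^{\td}} S^{\td}$ is a sum over thin-quadratic stackings $S$ with prescribed input and output colours, and stackings with distinct output colours contribute in distinct summands of the totalisation, it suffices to verify that whenever a thin-quadratic stacking $S$ has all input colours of GS form $(p_i, q_i, \min\{1, p_i\})$, its output colour is of GS form as well. I would carry this out by case analysis on the three types.

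Type I stackings produce an output of arity $(0, q, 0)$, which is trivially GS. For Type II, the degree identity $|S^{\td}| = (1 + \#\textrm{thin inputs}) - n = 2 - n$ together with the $2$-box structure imposes $n = 2$ and exactly one thin input; since the bottom is non-thin by definition, the top box must be thin. Plugging a thin top into a slot of a non-thin bottom only affects the $q$-coordinate of the output, so a bottom of GS form $(p_b, q_b, 1)$ with $p_b \geq 1$ yields a GS output.

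For Type III, the degree identity again forces exactly one thin input, which by definition is the bottom; the remaining $n - 1$ top boxes are therefore non-thin and of GS form $(p_i, q_i, 1)$ with $p_i \geq 1$. Using the inductive definitions of $dL$ and $dR$ along the leftmost and rightmost root-to-leaf paths of $S$, the output $p$-value equals the $p$-value of the leftmost top box and the output $r$-value equals the $r$-value of the rightmost top box (the thin bottom contributes $0$ to both). Hence the output has arity $(p_{\mathrm{out}}, q_{\mathrm{out}}, 1)$ with $p_{\mathrm{out}} \geq 1$, which is GS. The main technical point is this depth computation in Type III, which uses crucially that thin-quadratic stackings forbid semi-thin boxes. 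Once closure of $P_n$ on GS inputs is established, closure of $L_n$ is automatic and $s\inv\CGS(\A)$ is identified as an $\Linf$-admissible subalgebra of $\Tot(s\inv_{\thin}\CC_\square^{\bu,\bu,\bu}(\A))$.
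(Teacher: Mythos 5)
Your overall strategy is the same as the paper's: reduce the statement to showing that every thin-quadratic stacking with GS-box inputs has a GS-box output, and argue by cases on the three types. The degree bookkeeping for the inclusion and the treatment of types $\rom{1}$ and $\rom{2}$ are correct and match the paper. The problem is in type $\rom{3}$, which is where all the content sits. You claim that ``the output $r$-value equals the $r$-value of the rightmost top box'', reading this off the recursion for $dR$; but for a leaf $c$ the definition is $dR(c) = s_c + r_c$, and your computation silently sets the start value $s_c$ to zero. That is automatic only when $q_c > 0$. If the rightmost (equivalently, every) top box has $q_c = 0$ --- e.g.\ restriction-type GS boxes of colour $(1,0,1)$ --- the admissibility conditions alone permit a common positive offset $s$ for the whole top layer, giving an output colour with $r = s + 1 > 1$, which is not a GS box. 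So the step where you conclude $r=1$ is precisely the step your argument does not supply. What has to exclude such offsets is the requirement that $\Hor_S$ have exactly two connected components (no pass-through strands besides the isolated thin bottom vertex and the connected top layer); you invoke the no-semi-thin condition (which is automatic for GS inputs) but never this one.

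The paper's own type $\rom{3}$ argument is different and avoids the issue: since the inputs are GS boxes, every non-thin box has exactly one output in the horizontal composite graph $\Hor_S$; these boxes form a connected subgraph, hence a tree, so exactly one output is unplugged, and $r$ --- the number of outputs of $\Hor_S$ --- equals $1$. (It also does not need your claim about the output $p$-value, since a GS box only constrains $r=\min\{1,p\}$.) To repair your write-up, either adopt that counting argument, or first prove from the two-component condition that the extremal leaves of a type $\rom{3}$ thin-quadratic stacking have start value $0$, and only then run the depth recursion.
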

\begin{proof}
It suffices to show that the $\Linf$-operations restrict well: given a thin-quadratic stacking $S$ taking values in GS boxes, we show that the resulting box is a GS box.

For type $\rom{1}$ and $\rom{2}$, this is clear from the pictures \ref{type1} and \ref{type2}. Let $S$ be of type $\rom{3}$ whose resulting box has size $(p,q,r)$ (see \ref{type3}). It suffices to show that $r=1$. As $S$ takes value in GS boxes, its non-thin boxes have exactly one single right output. As their corresponding subgraph of the horizontal composite graph $\Hor_S$ is connected, they form a tree. Hence, $r$ corresponds to the output of this tree and thus $r=1$. 
\end{proof}

Let us make these operations as concrete as possible.

\subsubsection{The operation $P_2$}

Employing \eqref{signsStacking} and \eqref{signsQuadraticThin}, we can make some of the signs explicit (using \ref{subapptdsign}). Given, $\te_1 \in \CC^{p_1,q_1}(\A)$ and $\te_2 \in \CC^{p_2,q_2}(\A)$, we have a closed formula for $P_2$:
\begin{itemize}
\item if $p_1=p_2=0$, then
$$P_2(s\inv \te_1,s \inv \te_2) = \sum_{i=1}^{q_1} (-1)^{(i-1)(q_2-1)} s\inv \; \scalebox{0.8}{$\tikzfig{typeIGS}$}$$
 corresponding to the Gerstenhaber brace on the nose;
\item If $p_1>0$ and $p_2=0$, then
$$P_2(s\inv \te_1, s\inv \te_2) = \sum_{i=1}^{q_1} (-1)^{i(q_2-1)} s\inv \; \scalebox{0.8}{$\tikzfig{typeII}$}\; ;$$
\item If $(p_1,q_1) =(0,1)$ and $p_2>0$, then
$$P_2(s\inv \te_1, s\inv \te_2) =  s\inv \; \scalebox{0.8}{$\tikzfig{typeIIIp2}$}\; ;$$
\item Otherwise, $P_2(s\inv \te_1,s\inv \te_2)= 0$.
\end{itemize}
\subsubsection{The operations $P_n$}
For $n>2$ there is no closed formula for $P_n$ as the stackings involved differ greatly. However, we do have the following general formula: given $\te_1 \in \CC^{p_1,q_1}(\A), \ldots, \te_n \in \CC^{p_n,q_n}(\A)$, we have
\begin{equation} \label{eqPnsigns}
P_n(s\inv \te_1,\ldots,s\inv \te_n) = \sum_{S \in \Thin^2_{\rom{3}}(n)} (-1)^{ S^{\td } + \frac{n(n-1)}{2} +1 +n(p+q-1) + \sum_{i=1}^n (p_i+q_i-1)\sum_{j>i}(p_j+q_j-1) }s\inv \; \scalebox{0.8}{$\tikzfig{typeIIIpn}$}
\end{equation}

\subsection{Deforming lax prestacks}\label{subparlax}

\subsubsection{Lax prestacks}

\begin{mydef}\label{deflaxprestack}
A \emph{lax prestack} $\A= (\A,m,f,c)$ over $\uuu$ consists of the following data:
\begin{itemize}
\item for every object $U \in \uuu$, a $k$-linear category $(\A(U), m^U)$ where $m^U$ is the composition of morphisms in $\A(U)$.
\item for every morphism $u:V \longrightarrow U$ in $\uuu$, a $k$-linear functor $f^u = u \st : \A(U) \longrightarrow \A(V)$; these functors are called the \emph{restrictions}.
\item for every couple of morphisms $v: W \longrightarrow V, u: V \longrightarrow U$ in $\uuu$, a $k$-linear natural transformation
$$c^{u,v}: v\st u \st \longrightarrow (uv)\st;$$
these transformations are called the \emph{twists}. Moreover, the twists have to satisfy the following coherence condition for every triple $w:T \longrightarrow W$, $v :W \longrightarrow V$ and $u:V \longrightarrow U$:
$$c^{u,vw}(c^{v,w} \circ 	u \st) = c^{uv,w}(w\st \circ c^{u,v}).$$
\end{itemize}
\end{mydef}

A lax prestack $\A$ over $\uuu$ has an underlying quiver $\A$ on $\uuu$ and the associated triple $(m,f,c)$ corresponds to a degree $2$ element of its GS object $\CGS(\A)$.

\subsubsection{Maurer-Cartan elements}
We describe the Maurer-Cartan elements $\al$ of $s\inv \CGS(\A)$. Due to the size constriction of 
$$\al = (m,f,c) \in s\inv\CGS(\A)_1 = \CC^{0,2}(\A) \oplus \CC^{1,1}(\A) \oplus \CC^{2,0}(\A)$$
the Maurer-Cartan equation reduces significantly.

\begin{lemma}\label{MCequationshort}
For $\al \in s\inv\CGS(\A)_1$, we have that $P_n(\al,\ldots,\al) = 0$ for $n\geq 5$.
\end{lemma}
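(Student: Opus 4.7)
The plan is to prove vanishing of $P_n(\alpha,\ldots,\alpha)$ for $n \ge 5$ by a direct combinatorial analysis showing that the index set of the sum defining $P_n$ becomes empty once the inputs are restricted to lie in the three components of $\alpha$; since $L_n$ is the antisymmetrisation of $P_n$, this forces $L_n(\alpha,\ldots,\alpha) = 0$ as well. The three components correspond to boxes $m = (0,2,0)$, $f = (1,1,1)$ and $c = (2,0,1)$, of which only $m$ is thin and none is semi-thin. For $n \ge 3$ both types $\rom{1}$ and $\rom{2}$ involve exactly two boxes, so only type $\rom{3}$ stackings can contribute; by formula \eqref{degreeRectangle} applied at degree $2-n$ with no semi-thin boxes, such a stacking carries exactly one thin box, which by definition is the bottom.

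First, I would pin down the underlying tree shape. The bottom being $m$ means $\V_S$ has a root of arity $q_m = 2$, hence exactly two children $B_1$ and $B_2$. The remaining $n-1$ vertices are coloured by $f$ (arity $1$, internal) or $c$ (arity $0$, leaf), and the standard tree identity $\#\text{leaves} = 1 + \sum_{v\,\text{internal}}(q_v - 1)$ forces exactly two $c$'s and $a = n-3$ copies of $f$ in any such tree.

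Next, I would combine admissibility with the GS output constraint: a non-thin GS box has right-width $r = 1$, so $dR(m) = 1$. Unfolding $dR(m)$ along the rightmost root-to-leaf path, each intermediate $f$ adds $r_f = 1$, so the path must contain no $f$ and the rightmost leaf must be a $c$ with start value $0$; in particular $B_2 = c$ with $s_{B_2} = 0$. The sibling relation at $m$ then yields $dR(B_1) = dL(B_2) = 2$, and the same counting applied within $B_1$'s subtree (which, since $q_f = 1$, is a linear chain of $f$'s ending in a $c$) leaves only two possibilities: $B_1 = c$ with $s = 1$, or $B_1 = f \to c$ with $s_c = 0$. Any longer chain would force a negative start value, contradicting $s \in \N$.

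Hence $|B_1| \in \{1,2\}$ and $n = 1 + |B_1| + 1 \in \{3,4\}$, so for $n \ge 5$ the sum defining $P_n(\alpha,\ldots,\alpha)$ is empty. The main step requiring care will be the depth bookkeeping---verifying that each $f$ on a root-to-leaf path contributes exactly $+1$ to $dR$ and that the rightmost-path unfolding genuinely saturates the admissibility constraints---but once these recursions are set up, the non-negativity of start values immediately caps $n$ at $4$.
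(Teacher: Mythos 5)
Your overall strategy is the same as the paper's: reduce to type $\rom{3}$ stackings whose unique thin box $m=(0,2,0)$ sits at the bottom, observe that each of the two subtrees plugged into the inputs of $m$ is a vertical chain (all other arities are $0$ or $1$), and cap the chain lengths via the depth constraints of admissibility. The bounding mechanism is sound: every non-thin input box ($f=(1,1,1)$ and $c=(2,0,1)$ alike) has right width $1$, so a chain of $k$ boxes has $dR \geq k$; combined with $dR(B_1)=dL(B_2)\leq 2$ and $dR(B_2)=1$ this gives $n\leq 1+2+1=4$.

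There is, however, a genuine error in your enumeration: you decree that every $f$ is internal and every leaf is a $c$, and hence that $B_2=c$ and that $B_1$'s chain ``ends in a $c$''. In the tree formalism of \S\ref{subsubpartrees} a vertex of arity $1$ may be a leaf (leaves contribute their arity to the output $q$), and such configurations genuinely occur: $m(f\otimes f)$, $m(c\otimes f)$ and $m(f(f)\otimes c)$ are admissible thin-quadratic stackings, and they are precisely the terms producing the $\CC^{1,2}$ and part of the $\CC^{2,1}$ components of the Maurer--Cartan equation in Theorem \ref{MCprestack}; if all leaves were $c$'s the output would always have $q=0$, which is visibly false. So your list of possibilities is incomplete. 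Fortunately the missed configurations obey the same depth bounds (indeed $B_2=f$ forces $dL(B_2)=1$ and hence $n\leq 3$), so the conclusion $n\leq 4$ survives; the correct count is ``exactly two leaves, each an $f$ or a $c$, and $n-3$ internal $f$'s''. One further point: your input $dR(m)=r=1$ presupposes that the resulting box is a GS box, which is Proposition \ref{propGSLinf} (proved via connectivity of the horizontal composite graph) and should be invoked explicitly.
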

\begin{proof}
Consider a thin-quadratic stacking $S$ with $n\geq 5$ boxes, then it is of type $\rom{3}$. If the right-most non-thin rectangle has size $(1,1,1)$, $S$ consists of at most $3$ boxes. Thus, $S$ is of the form
$$\tikzfig{lemmareduce_1}$$
where the right-most non-thin rectangle has size $(2,0,1)$. In order to fill this up with non-thin boxes of size $(1,1,1)$ or $(2,0,1)$, we only have the following possibilities
$$\tikzfig{lemmareduce_2}, \quad \tikzfig{lemmareduce_3} \quad \text{ and } \quad \tikzfig{lemmareduce_4}$$
As none of these consists of more than $4$ boxes, we have that $P_n = 0$ for $n \geq 5$.
\end{proof}

\begin{theorem}\label{MCprestack}
Let $\A$ be a $k$-quiver, then $\al= (m,f,c) \in \MC\left(s\inv\CGS(\A)\right)$ if and only if $(\A,m,f,c)$ is a lax prestack. 
\end{theorem}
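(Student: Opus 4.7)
The plan is to reduce the Maurer–Cartan equation to a finite set of identities and to match those identities, component by component, with the axioms of a lax prestack.

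Since $L_n$ is the antisymmetrisation of $P_n$ and we evaluate at the constant tuple $(\alpha,\ldots,\alpha)$, one has $L_n(\alpha,\ldots,\alpha) = n!\, P_n(\alpha,\ldots,\alpha)$, so the $\Linf$ Maurer--Cartan equation becomes
\[
 \sum_{n\ge 2}  P_n(\alpha,\ldots,\alpha) \; = \; 0.
\]
By Lemma~\ref{MCequationshort} only the terms $n=2,3,4$ survive. Writing $\alpha = (m,f,c)$ with $m\in\CC^{0,2}$, $f\in\CC^{1,1}$, $c\in\CC^{2,0}$, each $P_n(\alpha,\ldots,\alpha)$ lies in total degree $2$ of $s^{-1}\CGS(\A)$, so in bidegree $(p,q)$ with $p+q=3$. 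I would therefore split the equation into its four homogeneous components of bidegree $(0,3)$, $(1,2)$, $(2,1)$ and $(3,0)$ and treat them separately.

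For each of the four bidegrees I would enumerate, using the combinatorial description of $\boxop$ via stackings (Theorem~\ref{ThmBoxopStackings}) together with the classification into types I, II, III of thin-quadratic stackings, the finitely many stackings $S\in\boxop^{2-n}_{\mathsf{tq}}(n)$ whose box labels can be filled by copies of $m$, $f$, $c$ and whose output box has the prescribed bidegree. Concretely: the $(0,3)$-component involves only $P_2$ on two copies of $m$, and by the closed formula for type I yields the usual associativity relation $m\circ_1 m = m\circ_2 m$, i.e.\ associativity of each $m^U$. The $(1,2)$-component collects stackings using two $m$'s and one $f$ (in both $P_2$ and $P_3$ type III) and reproduces the functoriality identity $f^u(m^U) = m^V(f^u\otimes f^u)$ together with $f^u(1)=1$. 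The $(2,1)$-component brings in one $c$ together with $m$'s and $f$'s and, after regrouping the type~II and type~III stackings, gives precisely the naturality of the twist $c^{u,v}\colon f^v f^u \Rightarrow f^{uv}$. Finally, the $(3,0)$-component involves two $c$'s (in $P_2$ of type III) and three $f$'s (in higher $P_n$'s of type III), producing exactly the cocycle coherence
\[
c^{u,vw}(c^{v,w}\circ u^*) \; = \; c^{uv,w}(w^*\circ c^{u,v})
\]
required in Definition~\ref{deflaxprestack}.

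In each case one direction is a direct reading of the explicit formula for $P_2$ given after Proposition~\ref{propGSLinf} (together with the formula \eqref{eqPnsigns} for $P_n$); the converse follows because no other terms can cancel, the surviving stackings being in bijection with the summands of the corresponding lax prestack axiom. The main obstacle will be the careful sign bookkeeping for $P_3$ and $P_4$ of type III, for which no closed formula is available: I would handle this by listing the admissible stackings pictorially as in the proof of Lemma~\ref{MCequationshort}, applying \eqref{eqPnsigns} together with the sign conventions of Appendix~\ref{appsign}, and checking that the signs are exactly those appearing in the naturality and cocycle equations above. Once the four bidegree-by-bidegree identifications are established, the equivalence of $\MC(s^{-1}\CGS(\A))$ with lax prestack structures on $\A$ follows.
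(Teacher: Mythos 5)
Your overall architecture is exactly the paper's: truncate the Maurer--Cartan sum at $n=4$ using Lemma~\ref{MCequationshort}, project onto the four summands $\CC^{0,3}\oplus\CC^{1,2}\oplus\CC^{2,1}\oplus\CC^{3,0}$ of $\CGS^3(\A)$, identify each homogeneous component with one axiom of a lax prestack by enumerating the contributing thin-quadratic stackings, and defer the sign bookkeeping for the type~$\rom{3}$ stackings to the appendix. On the level of method there is nothing to object to.

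However, several of your concrete predictions for what the enumeration produces are wrong, and carrying out the plan as you state it would not reproduce the lax prestack axioms. In bidegree $(1,2)$ the two surviving terms are a type~$\rom{2}$ stacking of \emph{one} $m$ on top of one $f$ (a $P_2$ term) and a type~$\rom{3}$ stacking of \emph{two} $f$'s on top of a thin $m$ (a $P_3$ term); there is no term with two $m$'s and one $f$, and the Maurer--Cartan equation produces no unit condition $f^u(1)=1$ in this or any other component. More seriously, in bidegree $(3,0)$ there is no $P_2$ contribution at all: since $c\in\CC_\square^{2,0,1}(\A)$ has $q=0$ vertical inputs, nothing can be stacked on top of a $c$, and two twists are never composed by a two-box stacking. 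The composite of two twists is encoded as a \emph{three}-box type~$\rom{3}$ stacking (a thin $m$ with the two $c$'s placed side by side above it, one of them carrying a nonzero start value), and the other side of the cocycle identity as a \emph{four}-box stacking containing one $m$, one $f$ and two $c$'s --- not three $f$'s. The correct decompositions are $\MC(\al)_{[3,0]}=P_3(\al,\al,\al)_{[3,0]}+P_4(\al,\al,\al,\al)_{[3,0]}$ and likewise $\MC(\al)_{[2,1]}=P_3(\al,\al,\al)_{[2,1]}+P_4(\al,\al,\al,\al)_{[2,1]}$, with no type~$\rom{2}$ terms surviving in the latter. These are correctable bookkeeping errors rather than a flaw in the strategy, but as written your list of contributing stackings does not match what the Gerstenhaber--Schack box operad actually produces, so the claimed bijection between surviving stackings and summands of the axioms needs to be redone.
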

\begin{proof}
We first look at the equation
$$\MC(\al) = -P_2(\al,\al) + P_3(\al,\al,\al) + P_4(\al,\al,\al,\al).$$
By projecting to each summand of
$$\CGS^3(\A) = \CC^{0,3}(\A) \oplus \CC^{1,2}(\A) \oplus \CC^{2,1}(\A) \oplus \CC^{3,0}(\A)$$
we, equivalently, obtain four equations. We show that these four equations correspond respectively to the associativity of $m$, to the restriction maps $f$ preserving the compositions $m$, to the naturality of the transformation $c$ and to the cocycle condition for $c$. It then follows directly that $\MC(\al) = 0$ if and only if $(m,f,c)$ is a lax prestack structure on $\A$.

First, as $\al$ has degree $1$, we note that the signs in \eqref{eqPnsigns} reduce to 
$$P_n(\al,\ldots,\al) = \sum_{S \in \Thin^2(n)} (-1)^{ S^{\td} + 1 } S(\al,\ldots,\al)$$
We relegate the computations of $(-1)^{S^{\td}}$ for the corresponding stackings to \S \ref{signsMC}.

Projecting to $\CC^{p,q}(\A)$, we obtain for $(p,q)=(0,3)$
\begin{align*}
\MC(\al)_{[0,3]} = -P_2(\al,\al)_{[0,3]} = - \quad \tikzfig{MC03_1} \quad + \quad \tikzfig{MC03_2}
\end{align*}
Thus, for $U \in \uuu$ and $(A_0,A_1,A_2,A_3)$ in $\A(U)$ we have 
\begin{equation*}
m^U_{A_0,A_2,A_3}  \circ (m^U_{A_0,A_1,A_2} \otimes \A(U)) = m^U_{A_0,A_1,A_3}\circ ( \A(U) \otimes m^U_{A_1,A_2,A_3}), \label{assoc_data}
\end{equation*} 
Hence, $(\A(U),m^U)$ defines a category. 

For $(p,q) = (1,2)$, we compute
\begin{align*}
\MC(\al)_{[1,2]} = -P_2^{GS}(\al,\al)_{[1,2]} +P_3^{GS}(\al,\al,\al)_{[1,2]} =  \quad \tikzfig{MC12_2} \quad - 	 \quad \tikzfig{MC12_1}
\end{align*}
Thus, for $V \overset{u}{\rightarrow} U$ in $\uuu$ and $A_0,A_1,A_2$ in $\A(U)$ we have
\begin{equation*}
f^u_{u\st A_0,u\st A_2} \circ m^U_{A_0,A_1,A_2} = m^V_{u\st A_0,u\st A_1,u\st A_2} \circ ( f^u_{A_0,A_1} \otimes f^u_{A_1,A_2}), \label{functorial_data}
\end{equation*}
Hence, the restriction map $f^u:\A(U) \longrightarrow \A(V)$ defines a functor.

For $(p,q) = (2,1)$, we compute
\begin{align*}
\MC(\al)_{[2,1]} = P_3^{GS}(\al,\al,\al)_{[2,1]} + P_4^{GS}(\al,\al,\al,\al)_{[2,1]} =  \quad \tikzfig{MC21_2} \quad - \quad \tikzfig{MC21_1}
\end{align*}
Thus, for $W \overset{v}{\rightarrow} V \overset{u}{\rightarrow} U$ in $\uuu$ and $A_0,A_1$ in $\A(U)$ we have
\begin{equation*}
m^W_{v\st u \st A_0, v\st u\st A_1, (uv)\st A_1} \circ ( (f^v_{u\st A_0, u \st A_1} \circ f^u_{A_0,A_1}) \otimes c^{u,v}_{A_1} ) = m^W_{v\st u \st A_0, (uv)\st A_0, (uv)\st A_1} \circ ( c^{u,v}_{A_0} \otimes f^{uv}_{A_0,A_1}) \label{naturality_data}
\end{equation*}
Hence, the twist $c^{u,v}$ defines a natural transformation $f^v f^u \longrightarrow f^{uv}$. 

For $(p,q) = (3,0)$, we compute
\begin{align*}
\MC(\al)_{[3,0]} = P_3^{GS}(\al,\al,\al)_{[3,0]} + P_4^{GS}(\al,\al,\al,\al)_{[3,0]} = - \quad \tikzfig{MC30_1} \quad + \quad \tikzfig{MC30_2}
\end{align*}
Thus, for $T \overset{w}{\rightarrow} W \overset{v}{\rightarrow} V \overset{u}{\rightarrow} U$ in $\uuu$ and $A_0$ in $\A(U)$ we have
\begin{multline*}
 m^T_{w\st v\st u\st A_0, (vw)\st u\st A_0, (uvw)\st A_0} \circ ( c^{v,w}_{u\st A_0} \otimes c^{u,wv}_{A_0} ) =\\ m^T_{w\st v\st u\st A_0, w \st (uv)\st A_0, (wvu)\st A_0} \circ ( (f^w_{v\st u \st A_0, (uv)\st A_0} \circ c^{u,v}_{A_0}) \otimes c^{vu,w}_{A_0} ) \label{cocycle_data} 
\end{multline*}
Hence, the twists $c$ satisfy the cocycle condition.
Thus, we deduce that $\MC(\al) = 0$ if and only if 
$(\A,m,f,c)$ is a lax prestack over $\uuu$.
\end{proof}

\subsubsection{The Deformation complex}

Let $k$ be a field of characteristic $0$. The theory of Maurer-Cartan elements allows to twist the original $\Linf$-algebra by a Maurer-Cartan element $\al$ to obtain an $\Linf$-structure complex governing the deformations of $\al$ as a Maurer-Cartan element \cite[Proposition 17]{markl2010} \cite[Theorem 2.6.1]{merkulov2000}.

We apply this to lax prestacks.
\begin{theorem}
Let $(\A,m,f,c)$ be a lax prestack and write $\al=(m,f,c)\in s\inv \CGS(\A)_1$, then  the operations $L_n^\al$, defined on $\te_1,\ldots,\te_n \in s\inv \CGS(\A)$ as
$$L_n^\al(\te_1,\ldots,\te_n) := \sum_{r \geq 0}\frac{(-1)^{rn + \frac{r(r+1)}{2}}}{n!} L_{n+r}(\underbrace{\al,\ldots,\al}_{r\text{-times}},\te_1,\ldots,\te_n)$$
define a $\Linf$-algebra on the graded $k$-module $s\inv \CGS(\A)$.

In particular, for $\al'=(m',f',c') \in s\inv \CGS(\A)_1$ we have that $\al'$ is a $\MC$-element of $(s\inv \CGS(\A),L^\al_1,L_2^\al,\ldots)$ if and only if $(\A,m+m',f+f',c+c')$ is a lax prestack.
\end{theorem}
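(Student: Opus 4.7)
The plan is to reduce the statement to a direct application of the standard twisting machinery for $\Linf$-algebras, citing \cite{merkulov2000} and \cite{dotsenko_shadrin_vallette_2023} as the paper already does in the introduction. By Proposition \ref{propGSLinf}, the desuspended Gerstenhaber-Schack object $s^{-1}\CGS(\A)$ carries a genuine $\Linf$-structure with brackets $L_n$ (the anti-symmetrisations of the $P_n$). By Theorem \ref{MCprestack}, the hypothesis that $(\A,m,f,c)$ is a lax prestack translates precisely into $\al = (m,f,c) \in s^{-1}\CGS(\A)_1$ being a Maurer-Cartan element of this $\Linf$-algebra. Thus the input of the twisting theorem is satisfied.

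The first step is then to invoke the general twisting result: for any $\Linf$-algebra $(L, \{l_n\})$ (over a field of characteristic $0$) and any Maurer-Cartan element $\al \in L_1$, the formulas
$$l_n^{\al}(\te_1,\ldots,\te_n) = \sum_{r\geq 0} \frac{(-1)^{rn + \frac{r(r+1)}{2}}}{r!}\, l_{n+r}(\underbrace{\al,\ldots,\al}_{r},\te_1,\ldots,\te_n)$$
define a new $\Linf$-algebra structure on $L$, and furthermore $\al'$ is Maurer-Cartan in $(L,\{l_n^{\al}\})$ if and only if $\al+\al'$ is Maurer-Cartan in $(L,\{l_n\})$. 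Specializing to $L = s^{-1}\CGS(\A)$ with $l_n = L_n$ gives the $L_n^{\al}$ of the theorem.

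A technical point to verify is that the infinite sums defining $L_n^{\al}$ terminate, so that the formulas make sense without any completion. This is automatic here: since $\al$ lives in $\CC^{0,2}(\A) \oplus \CC^{1,1}(\A) \oplus \CC^{2,0}(\A)$, the same combinatorial argument as in Lemma \ref{MCequationshort} (there are only finitely many thin-quadratic stackings whose top layer can be filled by boxes of these three GS arities, because any stacking of sufficiently many such boxes fails to have a single horizontally connected top layer) bounds the number of insertions of $\al$ that contribute nontrivially to $L_{n+r}(\al^{\otimes r},\te_1,\ldots,\te_n)$. Consequently the sum over $r$ is finite for every fixed $\te_1,\ldots,\te_n$, and all the identities hold strictly.

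The second assertion of the theorem is then immediate: by the equivalence $\MC^{\al}(\al') = 0 \Leftrightarrow \MC(\al+\al') = 0$ from the twisting result, an element $\al' = (m',f',c') \in s^{-1}\CGS(\A)_1$ is Maurer-Cartan for $\{L_n^{\al}\}$ precisely when $\al+\al' = (m+m', f+f', c+c')$ is Maurer-Cartan for $\{L_n\}$, which by Theorem \ref{MCprestack} applied to the quiver $\A$ is equivalent to $(\A, m+m', f+f', c+c')$ being a lax prestack. The only genuinely non-routine point is to confirm that the sign conventions $(-1)^{rn+\frac{r(r+1)}{2}}$ in the statement agree with those of the twisting theorem in the sources cited; this is a Koszul-sign bookkeeping that I would handle by matching degrees, noting that $|\al|=1$ in $s^{-1}\CGS(\A)$.
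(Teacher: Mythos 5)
Your proposal is correct and takes essentially the same route as the paper: the paper's entire proof is the observation that $\al'$ is Maurer--Cartan for the twisted brackets if and only if $\al+\al'$ is Maurer--Cartan for the original ones, combined with Theorem \ref{MCprestack}, with the $\Linf$-structure coming from Proposition \ref{propGSLinf} and the twisting itself delegated to \cite{merkulov2000} and \cite{markl2010}, exactly as you do. Your additional check that the sum over $r$ terminates (which the paper omits) is a worthwhile refinement, and your $r!$ in the denominator is the standard normalisation of the twisting formula --- the $n!$ appearing in the theorem's statement looks like a typo.
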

\begin{proof}
It suffices to note that $\al'$ is a $\MC$-element if and only if $\al+\al'$ is a MC-element of the original $\Linf$-algebra. Hence, the result follows from Theorem \ref{MCprestack}.
\end{proof}

\appendix

\section{The (thin) desuspended sign}\label{appsign}

A technical ingredient of the higher Gerstenhaber brace (Definition \ref{Linf_theorem}) is an appropriate sign. In this appendix, we give an explicit definition of $(-1)^{S^{\td}}$ for $S \in \boxop$ and complete the proof of Theorems \ref{Linf_theorem} and \ref{MCprestack}.

The sign $(-1)^{S^{\td}}$ is defined in two steps: first, we define the \emph{desuspended sign} $(-1)^{S^{\dd}}$ using generators and relations, and then add to it.

\subsection{The desuspended sign}\label{subappdsign}

Let $\dd: \N^3 \longrightarrow \Z$ be the function $\dd(p,q,r) = p-r + q-1$. 

\begin{constr}\label{signStacking}
For a stacking $S \in \boxop$, we define the \emph{desuspended sign} $(-1)^{S^{\dd}}$ inductively as
\begin{enumerate}
\item for generators $C_{\nth{q}}^{p_0,\ldots,p_n;p,r}$ and $\eta$, we define
$$(-1)^{ C_{\nth{q}}^{p_0,\ldots,p_n;p,r} } = (-1)^{ (p_0+p_n)r+ \sum_{i=1}^n (q_i-1)( n-i)+ \sum_{i=1}^n (q_i-1)(p-r + p_0 - p_{i-1} )} \;, \quad (-1)^{\eta} = 1$$
\item for $S,S' \in \boxop$ composable, we set
$$(-1)^{ (S \circ_i S')^{\dd}} = (-1)^{S^{\dd} + S^{'\dd}}$$
\item for $S\in \boxop$ and $\si \in \Ss_n$, we set
$$(-1)^{ (S^\si)^{\dd}} = (-1)^{S^{\dd} + \si(\dd(p,q,r)) }$$
where $\si(\dd(p,q,r))$ is the Koszul sign associated to permuting elements of degree $\dd(p_i,q_i,r_i)$.
\end{enumerate}
\end{constr}
Observe that the sign of $C_{\nth{q}}^{p_0,\ldots,p_n;p,r}$ is the product of three signs
\begin{align*}
-1)^{ \V_{C_{\nth{q}}^{p_0,\ldots,p_n;p,r}}} = (-1)^{C_{\nth{q};q}} &:= (-1)^{\sum_{i=1}^n (q_i-1)(n-i)},\\
(-1)^{ \Hh_{C_{\nth{q}}^{p_0,\ldots,p_n;p,r}} } 
&:= (-1)^{(p_0+p_n)r},\\
(-1)^{ \Sh(C_{\nth{q}}^{p_0,\ldots,p_n;p,r}) } &:= (-1)^{ \sum_{i=1}^n (q_i-1)(p-r + p_0 - p_{i-1} )}
\end{align*}
where the last one is the sign of the following shuffle
$$n,q_1-1,\ldots,q_n-1,p-r,p_0-p_1,\ldots,p_{n-1}-p_n \leadsto n,p-r,q_1-1,p_0-p_1,\ldots,q_n-1,p_{n-1}-p_n$$
We call them respectively the \emph{vertical, horizontal} and \emph{shuffle sign}.
\begin{lemma} 
Construction \ref{signStacking} is well-defined.
\end{lemma}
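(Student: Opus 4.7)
The plan is to verify that the sign rule defined inductively on generators, on compositions (additively), and on the symmetric action (via Koszul signs) respects the defining presentation of $\boxop$ from Definition \ref{defgenFc}. Concretely, both the associativity relation \eqref{eqboxopAssoc} and the unit relations must produce the same sign on both sides; once these are checked, the recursive definition descends unambiguously to $\boxop$.

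The first step is to decompose the sign attached to each generator $C_{\underline{q}}^{\underline{p};p,r}$ into the three independent summands noted immediately after Construction \ref{signStacking}: the vertical sign $(-1)^{\V}$, the horizontal sign $(-1)^{\Hh}$, and the shuffle sign $(-1)^{\Sh}$. Since the total sign is a sum and the inductive rules (2) and (3) are additive, it is enough to check each component separately against the relations.

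The vertical piece depends only on the underlying tree and, under the morphism $\V : \boxop \to \Op$ of Proposition \ref{morphismtrees}, agrees with the standard desuspended sign of $C_{\underline{q}} \in \Op$ in the graded operad $\Op_{\dd}$ recalled in Appendix \ref{subapptotop}. Its consistency with associativity in $\boxop$ therefore reduces to the well-definedness of $\Op_{\dd}$. The horizontal piece $(-1)^{(p_0+p_n)r}$ depends only on the horizontal composite graph; via $\Hor : \boxop \to \Pro$ it is a short direct computation, since this sign only records how the output $r$ pairs with the outermost vertical data $p_0, p_n$. The shuffle piece is the Koszul sign of an explicit shuffle, and because Koszul signs are multiplicative under composition of permutations (and the rule (3) in Construction \ref{signStacking} uses the same convention), it suffices to observe that on both sides of \eqref{eqboxopAssoc} the concatenated shuffle of the constituent generators differs from the composite shuffle by exactly the permutation $\si$ prescribed by \eqref{shuffleOp}.

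The unit relations are immediate: $\eta$ carries sign $+1$ by definition, and the formulas for $C_n^{p,r;0,0}$ and $C^{0,\ldots,0;p,r}_{1,\ldots,1}$ both collapse to $+1$ by direct inspection of the three-term expression on the generator. The main obstacle is the combinatorial bookkeeping in the shuffle-sign step, where the nested sequences $(q^i_j - 1)$ and $(p^i_j - p^i_{j-1})$ must be tracked through the two sides of the associativity relation and shown to reassemble into the composite shuffle of \eqref{shuffleOp}; this is in principle a routine permutation calculation but requires care with the indices.
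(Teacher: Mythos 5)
Your overall skeleton coincides with the paper's: the unit relations are indeed immediate, the only relation requiring verification is the associativity \eqref{eqboxopAssoc}, and the generator sign is split into the vertical, horizontal and shuffle summands, which are treated one at a time. The horizontal and shuffle steps are described correctly in outline: the paper verifies the horizontal identity by a short direct computation, and handles the shuffle sign via its additivity under composition together with its behaviour $(-1)^{\Sh(S^\tau)} = (-1)^{\tau(a-1)+\tau(b-c)+\tau(a-1+b-c)}$ under permutations, which is essentially what you propose.

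The gap is in the vertical step. The reduction to ``the well-definedness of $\Op_{\dd}$'' does not work: as defined in the paper, $\Op_{\dd}$ is concentrated in degree $0$, its composition is $x^{\dd}\circ_i x'^{\dd} = (x\circ_i x')^{\dd}$ with no sign, and no sign is attached to the element $C_{\nth{q}}^{\dd}$ itself. The vertical sign $(-1)^{\sum_{i}(q_i-1)(n-i)}$ is an extra decoration on generators; it is also not the sign $(-1)^{t^{\psi}}$ of Appendix \ref{subapptotcoll}, which reads $(-1)^{\frac{\psi(a)(\psi(a)-1)}{2}+\sum_i \psi(a_i)\sum_{j>i}\psi(a_j)}$ and does not specialise to it. Consequently its compatibility with \eqref{eqboxopAssoc} is a genuine combinatorial identity that has to be proved, namely that the four vertical signs occurring in the relation combine to exactly the Koszul sign $\si(\dd(0,q,0))$; this is the first (and longest) computation in the paper's proof and is not a formal consequence of anything already established. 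Relatedly, ``checking each component separately against the relations'' needs sharpening: no single component satisfies the relation on its own. The paper shows that the vertical, horizontal and shuffle discrepancies equal $\si(\dd(0,q,0))$, $\si(\dd(p,0,r))$ and $\si(\dd(0,q,0))+\si(\dd(p,0,r))+\si(\dd(p,q,r))$ respectively, and only their sum gives the required $\si(\dd(p,q,r))$ modulo $2$. Once you carry out the vertical computation and make this recombination explicit, your argument becomes the paper's.
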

\begin{proof}
It suffices to verify the relations from $\boxop$. The only relation that requires verifying is the associativity, i.e. we verify that
$$(-1)^{ C_{q^1_1,\ldots,q^n_{m_n}}^{p^1_0,\ldots,p^n_{m_n};p+p_0,r+p_n} + C_{m_1,\ldots,m_n}^{p_0,\ldots,p_n;p,r} } = (-1)^{ C_{\sum_{j=1}^{m_1}q^1_j,\ldots,\sum_{j=1}^{m_n} q^n_j}^{p_0+p^1_0,\ldots,p_n+p^n_{m_n};p,r} + \sum_{i=1}^n C_{q^i_1,\ldots,q^i_{m_i}}^{p^i_0,\ldots,p^i_{m_i};p_{i-1},p_i} + \si(\dd(p,q,r))} $$
where $\si$ is the permutation described in \ref{defgenFc}.
%
We first compute the vertical sign
\begin{align*}
&(-1)^{C_{q^1_1,\ldots,q^n_{m_n}} + C_{m_1,\ldots,m_n} +  C_{\sum_{j=1}^{m_1}q^1_j,\ldots,\sum_{j=1}^{m_n} q^n_j} + \sum_{i=1}^n C_{q^i_1,\ldots,q^i_{m_i}}} \\
&= (-1)^{\sum_{i=1}^n \sum_{j=1}^{m_i} (q_j^i -1)(\sum_{s=1}^n m_s - \sum_{s<i}m_s -j) + \sum_{i=1}^n (m_i-1)(n-i) + \sum_{i=1}^n (\sum_{j=1}^{m_i}q^i_j-1)(n-i)+ \sum_{i=1}^n \sum_{j=1}^{m_i} (q^i_j-1)(m_i-j) } \\
&= (-1)^{\sum_{i=1}^n \sum_{j=1}^{m_i} (q_j^i -1)( n-i + \sum_{s>i}m_s))} = (-1)^{\sum_{i=1}^n \sum_{j=1}^{m_i} (q_j^i -1) \sum_{s>i}(m_s-1)}= (-1)^{\si(\dd(0,q,0))} 
\end{align*}

Next, we compute the horizontal sign
\begin{align*}
&(-1)^{\Hh_{C_{q^1_1,\ldots,q^n_{m_n}}^{p^1_0,\ldots,p^n_{m_n};p+p_0,r+p_n}} + \Hh_{C_{m_1,\ldots,m_n}^{p_0,\ldots,p_n;p,r}} + \Hh_{C_{\sum_{j=1}^{m_1}q^1_j,\ldots,\sum_{j=1}^{m_n} q^n_j}^{p_0+p^1_0,\ldots,p_n+p^n_{m_n};p,r}} + \sum_{i=1}^n \Hh_{C_{q^i_1,\ldots,q^i_{m_i}}^{p^i_0,\ldots,p^i_{m_i};p_{i-1},p_i}} } \\
&= (-1)^{(p_0^1 + p_{m_n}^n)(r+p_n) + (p_0 + p_n)r + (p_0+ p_0^1 + p_n + p_{m_n}^n )r + \sum_{i=1}^n (p_0^i + p^i_{m_i})p_i}  \\
&= (-1)^{(p^1_0 + p^n_{m_n})p_n + \sum_{i=1}^n (p_0^i + p^i_{m_i})p_i } = (-1)^{\si(\dd(p,0,r))}
\end{align*}

We now show that the signs of the shuffles add up to $(-1)^{ \si(\dd(0,q,0))+\si(\dd(p,0,r)) + \si(\dd(p,q,r))}$. This follows directly from the properties of the shuffle sign: let us first define the sign $(-1)^{\Sh(S)}$ for a general stacking $S \in \boxop((a_1,b_1,c_1),\ldots,(a_n,b_n,c_n);(a,b,c))$ as the sign of the shuffle
$$a_1-1,\ldots,a_n-1,b_1-c_1,\ldots,b_n-c_n \leadsto a_1-1,b_1-c_1,\ldots,a_n-1,b_n-c_n$$
It is then easy to see that $(-1)^{\Sh(S \circ_i S')} = (-1)^{\Sh(S)+\Sh(S')}$ for another stacking $S'$: indeed, this follows from the two equalities $a-1= \sum_{i=1}^n a_i-1$ and $b-c =  \sum_{i=1}^n b_i-c_i$. Moreover, we have for every permutation $\tau$ that $(-1)^{\Sh(S^\tau)} = (-1)^{\tau(a-1)+\tau(b-c) + \tau(a-1+b-c)}$. Indeed, we compute $(-1)^{\Sh(S)}$ alternatively as
\begin{align*}
a_1-1,\ldots,a_n-1,b_1-c_1,\ldots,b_n-c_n &\overset{\tau(a-1)+\tau(b-c)}{\leadsto} a_{\tau(1)}-1,\ldots,a_{\tau(n)}-1,b_{\tau(1)}-c_{\tau(1)},\ldots,b_{\tau(n)}-c_{\tau(n)}\\
&\overset{\Sh(S^\tau)}{\leadsto} a_{\tau(1)}-1,b_{\tau(1)}-c_{\tau(1)},\ldots,a_{\tau(n)}-1,b_{\tau(n)}-c_{\tau(n)} \\
&\overset{\tau(a-1+b-c)}{\leadsto}
 a_1-1,b_1-c_1,\ldots,a_n-1,b_n-c_n 
\end{align*}
This finishes the proof.
\end{proof}

\begin{vb}
For $S$ a thin-quadratic stacking of type $\rom{1}$ or $\rom{2}$, we respectively have the signs
$$(-1)^{(k-i)(l-1)} \quad \text{ and } \quad (-1)^{(k-i+p-r)(l-1)}$$
Note that these agree with the signs for the Gerstenhaber brace. Stackings of type $\rom{3}$ are a lot more complex, thus do not have an easy closed formula for its sign.
\end{vb}

\subsection{The thin desuspended sign}\label{subapptdsign}

We alter the desuspended sign in order to accomodate the properties of the signs appearing in the compiled operad $\boxop_{\td}$.

\begin{mydef}
Let $S\in \boxop((p_1,q_1,r_1),\ldots,(p_n,q_n,r_n;(p,q,r))$ be a stacking. Let $i_1 \downarrow \ldots \downarrow i_k$ be its non-thin boxes ordered by occurrence in the stacking, then we define its \emph{thin desuspended sign} as
$$(-1)^{S^{\td}} = (-1)^{S^{\dd} + n+\sum_{t=1}^n \td(p_t,q_t,r_t)|\{j: i_j >t \}|} $$ 
\end{mydef}
\begin{vb}
Let $S$ be a thin-quadratic stacking of type $I$ or $II$, then their thin desuspended and desuspended signs coincide. If $S$ is of type $III$, we obtain $(-1)^{S^{\td}} = (-1)^{S^{\dd} + n+ \sum_{i=1}^n (n-i)\td(p_i,q_i,r_i)}$.
\end{vb}

In order to neatly state the behaviour of this new sign under composition, we first define an auxiliary sign.

\begin{constr}\label{constrTDextrasign}
Let $S,S'\in \boxop$ be composable stackings and assume $S'$ has a non-thin rectangle. Let
\begin{itemize}
\item $s_1 \downarrow \ldots \downarrow s_k$ be the non-thin boxes in $S$,
\item $s'_1\downarrow \ldots \downarrow s'_{k'}$ be the non-thin boxes in $S'$,
\item $s''_1 \downarrow \ldots \downarrow s''_{k''}$ be the non-thin boxes in $S''$,
\item $\al$ be the function $\al(j)= j+i-1$ identifying boxes $s'_j$ with their image in $\al(s'_j)$ in $S \circ_i S'$,
\item $\be$ be the function $\be(j) = j$ if $j<i$, $\be(i)=\al(s'_1)$ and $\be(j)=j+m-1$ otherwise, identifying $s_j$ with their image $\be(s_j)$ in $S\circ_i S'$. Note that we identify the rectangle $i$ with the first first non-thin rectangle $s_1'$ of $S'$.
\end{itemize}
Then we define the sign $\td(S,S',i)$ as the sign of the shuffle
$$\be(s_1)\ldots \be(s_k) \al(s'_2)\ldots \al(s'_{k'})  \leadsto s''_1\ldots s''_{k''}$$
\end{constr}

\begin{prop}\label{propsigns}
 For a stacking $S \in \boxop((p_1,q_1,r_1),\ldots,(p_n,q_n,r_n);(p,q,r))$, we have 
$$(-1)^{(S^\si)^{\td}} = (-1)^{S^{\td} + \si(\td(\underline{p},\underline{q},\underline{r}))}  $$
and for two stackings $S,S'\in \boxop$, we have
$$(-1)^{(S \circ_ i S')^{\td}} = (-1)^{S^{\td}+S^{'\td} + |S^{'\td}|\sum_{j<i}\td(p_j,q_j,r_j)+ \td(S,S',i) +1} $$
\end{prop}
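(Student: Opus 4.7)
The plan is to reduce both identities to the already-verified behaviour of the desuspended sign $(-1)^{S^{\dd}}$ (equivariance and compositionality from Construction \ref{signStacking}) and then to track the ``thin correction''
\[
\delta(S)\;:=\;n\;+\;\sum_{t=1}^{n}\td(p_t,q_t,r_t)\,\bigl|\{j:i_j>t\}\bigr|\pmod{2}
\]
by which $(-1)^{S^{\td}}$ differs from $(-1)^{S^{\dd}}$. Exchanging the order of summation yields
\[
\sum_{t=1}^{n}\td(p_t,q_t,r_t)\,\bigl|\{j:i_j>t\}\bigr|\;=\;\sum_{j=1}^{k}\sum_{t<i_j}\td(p_t,q_t,r_t),
\]
so that $\delta(S)$ becomes a sum indexed by the non-thin boxes of $S$; this is the form best suited for both computations.

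For the permutation statement, combine $(-1)^{(S^{\si})^{\dd}}=(-1)^{S^{\dd}+\si(\dd)}$ with the observation that $S^{\si}$ has the same non-thin positions as $S$, but with labels relabelled through $\si^{-1}$. Computing $\delta(S^{\si})-\delta(S)$ from the double-sum formulation above reduces to comparing, for each non-thin box, the weighted sum of labels preceding it before and after the relabelling. The outcome is the Koszul sign obtained by permuting $n$ entries with degrees $\td-\dd$, which equals $1$ on non-thin colours and $0$ on thin ones; this is precisely $\si(\td)-\si(\dd)\pmod 2$, and the identity follows.

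For the composition formula, write $S''=S\circ_i S'$ with $n+m-1$ boxes, so the parity $(n+m-1)\equiv(n+m)+1$ already accounts for the constant $+1$. Under the assumption of Construction \ref{constrTDextrasign} that $S'$ contains a non-thin rectangle, the non-thin labels of $S''$ form the disjoint union of the $\be$-images of the non-thin labels of $S$ and the $\al$-images of $s'_2,\ldots,s'_{k'}$, since $s'_1$ is identified with the slot $i$ via $\be$. Rewriting $\delta(S'')$ through the double-sum identity and partitioning the pairs $(t,i''_j)$ according to whether both come from $S$, both from $S'$, or they are mixed, the first two classes recover $\delta(S)+\delta(S')$ up to the label shifts induced by $\be,\al$. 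The mixed-pair contributions, counted with the appropriate $\td$-weighted Koszul signs, are exactly the inversions of the shuffle
\[
\be(s_1),\ldots,\be(s_k),\al(s'_2),\ldots,\al(s'_{k'})\ \longrightarrow\ s''_1,\ldots,s''_{k''},
\]
which by Construction \ref{constrTDextrasign} is $\td(S,S',i)$. Finally, the interaction of the thin labels of $S$ with index $<i$ with the fully inserted $S'$ contributes the outer term $|S^{'\td}|\sum_{j<i}\td(p_j,q_j,r_j)$, matching the composition sign of $\boxop_{\td}$.

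The main technical obstacle is to organise the mixed-pair bookkeeping cleanly: identifying the $\td$-weighted Koszul sign of the shuffle with the discrepancy between $\delta(S'')$ and $\delta(S)+\delta(S')$ (modulo the outer contribution) requires a label-by-label comparison, through the $\downarrow$ order of $S''$, between $\sum_{t<i''_j}\td_t$ and the corresponding sum inside the source ($S$ or $S'$). A clean way to proceed is by induction on the number of non-thin boxes of $S'$, reducing to the case where $S'$ is either a single non-thin generator stacked on thin boxes, or is obtained by partial composition from smaller stackings, where the $\be,\al$ bookkeeping is direct.
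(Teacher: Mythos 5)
Your overall strategy is the same as the paper's: peel off the known behaviour of the desuspended sign from Construction \ref{signStacking} and control the thin correction $\delta(S)$ separately, matching the residual terms against $\si(\td)$, $|S^{'\td}|\sum_{j<i}\td(p_j,q_j,r_j)$ and $\td(S,S',i)$. The paper packages $\delta(S)-n$ as the Koszul sign of the shuffle $0_{i_k}\ldots 0_{i_1}\dd_1\ldots\dd_n\leadsto\td_1\ldots\td_n$ inserting a degree-one marker in front of each non-thin box; your double sum $\sum_{j}\sum_{t<i_j}\td_t$ is the same quantity written combinatorially, and your treatment of the composition identity (splitting pairs into those internal to $S$, internal to $S'$, and mixed, with the mixed ones giving $\td(S,S',i)$, the $j<i$ cross-terms giving the external composition sign, and $n''=n+m-1$ giving the $+1$) is the same bookkeeping the paper carries out through its chain of four shuffles. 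So the route is not genuinely different; the question is whether the steps close.

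There is a real gap in the equivariance step. You justify $\delta(S^\si)-\delta(S)\equiv\si(\td)-\si(\dd)$ by identifying both sides with the Koszul sign $\si(\epsilon)$ for the degree function $\epsilon=\td-\dd$. But Koszul signs are quadratic, not additive, in the degree function: over an inversion $(s,t)$ one has $\td_s\td_t-\dd_s\dd_t=\epsilon_s\dd_t+\epsilon_t\dd_s+\epsilon_s\epsilon_t$, which is not $\epsilon_s\epsilon_t$ in general (take $\td_s=2$, $\td_t=1$). On the other side, the label-by-label comparison of $\sum_{j}\sum_{t<i_j}\td_t$ before and after relabelling yields $\sum_{\mathrm{inv}}(\epsilon_s\td_t+\epsilon_t\td_s)$ modulo $2$ (each non-thin box picks up the full $\td$-degrees of the labels that cross it), which is not $\sum_{\mathrm{inv}}\epsilon_s\epsilon_t$ either, and the two discrepancies do not cancel: already for two non-thin boxes and $\si=(12)$ they differ by $1$. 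The fix, which is what the paper's one-line argument implicitly relies on, is to work throughout with the shuffle presentation of $(-1)^{S^{\td}}$, in which $\si$ permutes the markers $0_{i_j}$ together with the letters they precede, so that the target sequence contributes $\si(\td)$ on the nose while the source contributes $\si(\dd)$. The composition half of your proposal is a sound plan but not yet a proof: the deferred ``label-by-label comparison through the $\downarrow$ order'' is exactly the content of the paper's shuffles together with the parity identity for $k''=k+k'-1$, and it still has to be written out; the inductive fallback you sketch is not needed once that comparison is done directly.
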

\begin{proof}
We can compute the sign $(-1)^{S^{\td}}$ as the sign $(-1)^{S^{\dd}+n}$ and the sign of the following shuffle
\begin{align*}
0_{i_k}\ldots 0_{i_1} \dd(p_1,q_1,r_1)\ldots \dd(p_n,q_n,r_n) \leadsto \td(p_1,q_1,r_1)\ldots \td(p_n,q_n,r_n)
\end{align*}
where we shuffled the letter $0_{i_j}$ in front of $\dd(p_{i_j},q_{i_j},r_{i_j})$. It is then clear that $(-1)^{(S^{\si})^{\td} } = (-1)^{ S^{\td} + \si(\td(p,q,r))}$ as it simply permutes the numbers $\td(p_i,q_i,r_i)$.

We now show the second equality when composing stackings. There is nothing to show if the resulting rectangle of $S'$ is thin as all boxes in $S'$ are then thin. Assume $S'\in \boxop((p'_1,q'_1,r_1'),\ldots,(p'_m,q'_{m},r'_{m});(p_i,q_i,r_i))$ and let $s_t, s'_t, s''_t$ as in construction \ref{constrTDextrasign},
then we compute the shuffle computing $(-1)^{(S\circ_i S')^{\td}- (S \circ_i S')^{\dd}+n+n'-1}$ as
\begin{align*}
&0_{s_{k''}''}\ldots 0_{s_1''}\dd(p_1,q_1,r_1)\ldots \dd(p'_1,q'_1,r'_1)\ldots\dd(p'_m,q'_m,r'_m) \ldots \dd(p_n,q_n,r_n) \\
&\overset{(1)}{\leadsto} 0_{s'_{k'}}\ldots 0_{s'_2} 0_{s_k}\ldots 0_{s'_1}\ldots 0_{s_1} \dd(p_1,q_1,r_1)\ldots \dd(p'_1,q'_1,r'_1)\ldots\dd(p'_m,q'_m,r'_m) \ldots \dd(p_n,q_n,r_n) \\
&\overset{(2)}{\leadsto} 0_{s'_{k'}}\ldots 0_{s'_2} \td(p_1,q_1,r_1)\ldots 0_{s'_1}\dd(p'_1,q'_1,r'_1)\ldots\dd(p'_m,q'_m,r'_m) \ldots \td(p_n,q_n,r_n) \\
&\overset{(3)}{\leadsto}   \td(p_1,q_1,r_1)\ldots 0_{s'_{k'}}\ldots 0_{s'_2} 0_{s'_1}\dd(p'_1,q'_1,r'_1)\ldots\dd(p'_m,q'_m,r'_m) \ldots \td(p_n,q_n,r_n) \\
&\overset{(4)}{\leadsto} \td(p_1,q_1,r_1)\ldots \td(p'_1,q'_1,r'_1)\ldots\td(p'_m,q'_m,r'_m) \ldots \td(p_n,q_n,r_n)
\end{align*}
where $(2),(3)$ and $(4)$ compute respectively $(-1)^{S^{\td}-S^{\dd}+n}, (-1)^{|S^{'\td}| \sum_{j<i}\td(p_j,q_j,r_j)}$ and $(-1)^{S^{'\td} - S^{'\dd}+n'}$. We can rewrite the shuffle $(1)$ as
\begin{align*}
&0_{s_{k''}''}\ldots 0_{s_1''} \leadsto 0_{s_{1}''}\ldots 0_{s_{k''}''} \leadsto 0_{s_1} \ldots 0_{s'_1} \ldots 0_{s_k} 0_{s'_2} \ldots 0_{s'_{k'}} \leadsto 0_{s'_{k'}}\ldots 0_{s'_2} 0_{s_k}\ldots 0_{s'_1}\ldots 0_{s_1}
\end{align*}
which has sign $(-1)^{ \frac{k''(k''-1)}{2} + \td(S,S',i) + \frac{k(k-1)}{2} + \frac{(k'-1)(k'-2)}{2} + k(k'-1)} = (-1)^{\td(S,S',i)} $ as $k''= k+k'-1$. This completes the proof.
\end{proof}

\subsection{Computing signs}\label{subappcompute}

\subsubsection{Signs for Theorem \ref{Linf_theorem}}

We complete the proof Theorem \ref{Linf_theorem} by computing the appropriate signs.

\begin{lemma}\label{signsLinf}
For every composable pair of thin-quadratic stackings
\begin{itemize}
\item  $S \in \boxop((p_1,q_1,r_1),\ldots,(p_k,q_k,r_k);(p,q,r))$,
\item  $S' \in \boxop((p'_1,q'_1,r_1'),\ldots,(p'_l,q'_{l},r'_{l});(p_i,q_i,r_i))$,
\end{itemize}
and an index $1 \leq i \leq n$, there exists two unique thin-quadratic stackings 
\begin{itemize}
\item  $\tilde{S} \in \boxop((\tilde{p}_1,\tilde{q}_1,\tilde{r}_1),\ldots,(\tilde{p}_{\tilde{k}},\tilde{q}_{\tilde{k}},\tilde{r}_{\tilde{k}});(p,q,r))$,
\item  $\tilde{S}' \in \boxop((\tilde{p}'_1,\tilde{q}'_1,\tilde{r}'_1),\ldots,(\tilde{p}'_{\tilde{l}},\tilde{q}'_{\tilde{l}},\tilde{r}'_{\tilde{l}});(\tilde{p}_j,\tilde{q}_j,\tilde{r}_j))$,
\end{itemize}
an unique index $1\leq j \leq \tilde{k}$ and a unique permutation $\si \in \Ss$ such that 
$$S \circ_i S' = (\tilde{S} \circ_j \tilde{S}')^\si.$$
Moreover, we then have that
$$(-1)^{(k-i)l + i-1 + S^{\td} + S^{'\td} } = (-1)^1 (-1)^{(\tilde{k}-j)\tilde{l} + j-1 + \si + \tilde{S}^{\td} + \tilde{S'}^{\td}}$$
 \end{lemma}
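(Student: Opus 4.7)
The existence and uniqueness of the quadruple $(\tilde{S}, \tilde{S}', j, \sigma)$ with $S \circ_i S' = (\tilde{S} \circ_j \tilde{S}')^\sigma$ is already established in the case-by-case analysis of Theorem \ref{Linf_theorem} (equations \eqref{Linfeq1}--\eqref{Linfeq6}), so the content of this lemma lies entirely in the sign identity. My plan is to apply Proposition \ref{propsigns} to both sides in order to reduce the identity to a combinatorial statement about shuffle signs and Koszul signs. Concretely, rewriting
$$(-1)^{(S \circ_i S')^{\td}} = (-1)^{S^{\td} + S'^{\td} + |S'^{\td}|\sum_{s<i} \td(p_s, q_s, r_s) + \td(S, S', i) + 1}$$
and
$$(-1)^{((\tilde{S} \circ_j \tilde{S}')^\sigma)^{\td}} = (-1)^{\tilde{S}^{\td} + \tilde{S}'^{\td} + |\tilde{S}'^{\td}|\sum_{s<j} \td(\tilde{p}_s, \tilde{q}_s, \tilde{r}_s) + \td(\tilde{S}, \tilde{S}', j) + \sigma(\td) + 1}$$
and equating them (since the underlying stackings coincide), the desired identity reduces to showing that the quantity $(k-i)l + i - (\tilde{k}-j)\tilde{l} - j$ is congruent modulo $2$ to the accumulated discrepancy between the correction terms $\td(S,S',i)$, $\td(\tilde{S}, \tilde{S}', j)$, the two partial degree sums, and the Koszul sign $\sigma(\td)$, up to the extra $+1$ on the right.

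For cases \eqref{Linfeq1}--\eqref{Linfeq4} of types I and II, both $S, S'$ (and hence $\tilde{S}, \tilde{S}'$) contain exactly two boxes, the permutation $\sigma$ is either the identity or the transposition $(23)$, and the shuffle signs from Construction \ref{constrTDextrasign} are either trivial or a single swap of non-thin boxes. In these four subcases the check reduces to a short arithmetic computation using the closed-form expression \eqref{signsQuadraticThin} for thin-quadratic signs of types I and II, together with the observation that the arity data $(k,l,i)$ and $(\tilde{k}, \tilde{l}, j)$ are related in a simple way visible directly from the pictures.

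The main difficulty is concentrated in the type III cases \eqref{Linfeq5} and \eqref{Linfeq6}, where both stackings may contain arbitrarily many boxes and no closed form is available for their thin-desuspended signs. Here the key observation is that the permutation $\sigma$ reshuffles the boxes precisely so that, on both sides of the identity, the non-thin boxes end up in matched positions within the combined stacking; consequently the Koszul sign $\sigma(\td)$ absorbs the discrepancy between the orderings of non-thin boxes that enter the definitions of $\td(S,S',i)$ and $\td(\tilde{S}, \tilde{S}', j)$. The technical heart of the argument is then to track, simultaneously, the parities of the numbers of thin and non-thin boxes above and below the insertion vertex, and to verify that the contribution of the horizontally connected top layer (which sits entirely inside $S$ in case \eqref{Linfeq5} and is rearranged by $\sigma$ in case \eqref{Linfeq6}) is correctly accounted for. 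Once this bookkeeping is performed, all shuffle and Koszul contributions cancel pairwise, and the residual constants $(k-i)l + i$ versus $(\tilde{k}-j)\tilde{l} + j + 1$ balance out to yield the claimed identity.
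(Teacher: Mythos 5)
Your reduction is exactly the paper's: existence and uniqueness of $(\tilde S,\tilde S',j,\si)$ are delegated to the case analysis \eqref{Linfeq1}--\eqref{Linfeq6} in the proof of Theorem \ref{Linf_theorem}, and applying Proposition \ref{propsigns} to both sides of $S\circ_i S' = (\tilde S\circ_j\tilde S')^{\si}$ cancels the partial degree sums and the Koszul sign $\si(\td(p,q,r))$, leaving only
$$(-1)^{(k-i)l+i-1+\td(S,S',i)}\quad\text{versus}\quad(-1)^{(\tilde k-j)\tilde l+j-1+\si+\td(\tilde S,\tilde S',j)}$$
to compare. For the type $\rom{1}$ and $\rom{2}$ cases \eqref{Linfeq1}--\eqref{Linfeq4} your argument is essentially complete, since there $\td(S,S',i)=\td(\tilde S,\tilde S',j)=0$ and only the signature of the identity or of $(23)$ remains to be read off.

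The gap is in the type $\rom{3}$ cases \eqref{Linfeq5} and \eqref{Linfeq6}, which carry all the difficulty of the lemma. There you assert that ``all shuffle and Koszul contributions cancel pairwise'' and that the residual constants balance out; that is the statement to be proved, not an argument for it. What is actually needed is (a) the decomposition of $\si$ as a product of the shuffle $\si_1$ putting the left-hand composite in standard order and the shuffle $\si_2$ reordering the right-hand one, together with the explicit evaluation $(-1)^{\si_1}=(-1)^{j-i+1}$, and (b) the computation of $\td(\tilde S,\tilde S',j)$ from Construction \ref{constrTDextrasign} as a composite of two explicit shuffles, yielding $(-1)^{\td(\tilde S,\tilde S',j)}=(-1)^{(\tilde k-j)\tilde l+\si_2}$ (and the analogous, trivial, shuffle in case \eqref{Linfeq5}). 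Only after these two identities do the exponents match up to the required overall factor $(-1)^1$. Your sketch also mislocates the mechanism: after the reduction via Proposition \ref{propsigns}, the Koszul sign $\si(\td(p,q,r))$ has already cancelled from both sides, so it cannot be what ``absorbs'' the reordering of the non-thin boxes; that role is played by the plain signature $(-1)^{\si}$ interacting with the shuffle sign $\td(\tilde S,\tilde S',j)$. Without (a) and (b) the type $\rom{3}$ verification remains a plan rather than a proof.
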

 \begin{proof}
 From the proof of Theorem \ref{Linf_theorem}, we know that we only have to verify the signs. We compute
\begin{align*}
S^{\td} \circ_i S^{'\td} &= (-1)^{|S^{'\td}| \sum_{s<i}\td(p_s,q_s,r_s)} (S\circ_i S')^{\td} \\
&=(-1)^{|S^{'\td}| \sum_{s<i}\td(p_s,q_s,r_s) + \si(\td(p,q,r))} ((\tilde{S} \circ_j \tilde{S'})^{\td})^{\si} \\
&= (-1)^{|S^{'\td}| \sum_{s<i}\td(p_s,q_s,r_s) + \si(\td(p,q,r))+ |\tilde{S}^{'\td}| \sum_{s<j}\td(\tilde{p}_s,\tilde{q}_s,\tilde{r}_s)} (\tilde{S}^{\td} \circ_j \tilde{S'}^{\td})^{\si} 
\end{align*}
On the other hand, Proposition \ref{propsigns} shows that 
$$(-1)^{S^{\td} + S^{'\td} + |S^{'\td}| \sum_{s<i}\td(p_s,q_s,r_s) + \td(S,S',i) } = (-1)^{ \tilde{S}^{\td} + \tilde{S}^{'\td} + |\tilde{S}^{'\td}| \sum_{s<j}\td(\tilde{p}_s,\tilde{q}_s,\tilde{r}_s) + \si(\td(p,q,r)) + \td(\tilde{S},\tilde{S'},j)}$$
We thus simply have to verify the following signs are opposite
$$(-1)^{(k-i)l + i-1 + \td(S,S',i) } \text{ and } (-1)^{(\tilde{k}-j)\tilde{l} + j-1 + \si + \td(\tilde{S},\tilde{S'},j)}$$
Following its definition \ref{constrTDextrasign}, we observe that if $S'$ is of type $\rom{1}$ or $\rom{2}$, then $\td(S,S',i)=0$. Hence, we can already compute
\begin{center}
\begin{tabular}{c|c|c}
&$(k-1)l+(i-1)(l-1) + \td(S,S',i)$ & $(\tilde{k}-j)\tilde{l} + j-1+ \si + \td(\tilde{S},\tilde{S'},j)$ \\
\hline
\eqref{Linfeq1} & $0$ & $(23)$ \\
\eqref{Linfeq2} & $0$ & $1$ \\
\eqref{Linfeq3} & $0$ & $(23)$ \\
\eqref{Linfeq4} & $0$ & $1$ \\
\eqref{Linfeq5} & $i-1$ & $k + (i+1 \ldots k+1) +\td(\tilde{S},\tilde{S'},1)$ \\
\eqref{Linfeq6} & $i-1$ & $(\tilde{k}-j)\tilde{l} + j-1+ \si + \td(\tilde{S},\tilde{S'},j)$
\end{tabular}
\end{center}
For \eqref{Linfeq5}, the sign $(-1)^{\td(\tilde{S},\tilde{S}',1)}$ is computed by the shuffle $2 \ldots k \leadsto 2 \ldots k$. Hence, the signs in \eqref{Linfeq5} are opposite as $(-1)^{(i+1\ldots k+1)} = (-1)^{k-i}$.

For \eqref{Linfeq6}, we first analyse the permutation $\si$: it decomposes as a shuffle $\si_1$ which shuffles the result of the left-hand composition in standard order
$$\scalebox{0.8}{$\tikzfig{shuffl_1}$} \quad \overset{\si_1}{\leadsto} \quad  \scalebox{0.8}{$\tikzfig{shuffl_2}$}$$
and the shuffle $\si_2$ which shuffles the result of the right-hand composition (without applying $\si$) in standard order
$$\scalebox{0.8}{$\tikzfig{shuffl_3}$} \quad \overset{\si_2}{\leadsto} \quad  \scalebox{0.8}{$\tikzfig{shuffl_4}$} $$
Note that it shuffles the numbering of the boxes in $\tilde{S}'$ and the children of rectangle $j$ in stacking $\tilde{S}$.

We easily compute $(-1)^{\si_1} = (-1)^{j-i+1}$.

To finish, we compute $\td(\tilde{S},\tilde{S'},j)$: we can compute it with the following two shuffles
$$2 \ldots (j-1)(j+1)\underbrace{(j+\tilde{l}) \ldots (\tilde{k}+\tilde{l}-1)}_{\tilde{k}-j\text{ elements}}\underbrace{(j+2) \ldots (j+\tilde{l}-1)}_{\tilde{l}-2\text{ elements}} \leadsto 2 \ldots (j-1)(j+1) \ldots (\tilde{k}+\tilde{l}-1)$$
and
$$ 2 \ldots (j-1)(j+1) \ldots (\tilde{k}+\tilde{l}-1) \leadsto 2 \ldots (j-1) \si_2((j+1)\ldots (\tilde{k}+\tilde{l}-1))$$
Hence, $(-1)^{\td(\tilde{S},\tilde{S'},j)} = (-1)^{(\tilde{k}-j)\tilde{l} + \si_2}$, showing that the signs in \eqref{Linfeq6} are opposite.
 \end{proof}
 
\subsubsection{Signs for Theorem \ref{MCprestack}}\label{signsMC}

We complete the proof of Theorem \ref{MCprestack} by explicitly computing the sign $(-1)^{S^{\td}}$ of the corresponding stackings. We have the following table.

\begin{center}

\begin{tabular}{c | c}
stacking & $(-1)^{ S^{\td}}$ \\
\hline
$\scalebox{0.8}{$\tikzfig{MC03_1_sign}$}$ & $(-1)^{1+ 1+ C^{0,0,0;0,0}_{2,1} + \eta } = (-1)^{1+1+1+0} = (-1)^{3}$ \\
$\scalebox{0.8}{$\tikzfig{MC03_2_sign}$}$ & $(-1)^{ 1+1+ C^{0,0,0;0,0}_{1,2} + \eta} = (-1)^{1+1+0+0} = (-1)^{2}$ \\
$\scalebox{0.8}{$\tikzfig{MC12_2_sign}$}$ & $(-1)^{ 1+1+ C^{0,0;1,1}_{2} } = (-1)^{1+1+0} =(-1)^2$ \\
$\scalebox{0.8}{$\tikzfig{MC12_1_sign}$}$ & $(-1)^{1+ C^{1,1,1;0,0}_{1,1} + \frac{3(3+1)}{2}} = (-1)^{1+0+6} = (-1)^7$ \\
$\scalebox{0.8}{$\tikzfig{MC21_2_sign}$}$ & $(-1)^{ 1+ C^{2,1,1;0,0}_{0,1} + \frac{3(3+1)}{2}} = (-1)^{ 1-1 + 6} = (-1)^{6}$ \\
$\scalebox{0.8}{$\tikzfig{MC21_1_sign}$}$ & $(-1)^{ 1+ C^{2,2,1;0,0}_{1,0} + C^{1,1;1,1}_{1} + \frac{4(4+1)}{2} } = (-1)^{ 1+0+2+10} =(-1)^{13}$   \\
$\scalebox{0.8}{$\tikzfig{MC30_1_sign}$}$ & $(-1)^{ 1+ C_{0,0}^{3,2,1;0,0} + \eta_h + \frac{3(3+1)}{2}} = (-1)^{1-2+0+6} =(-1)^{5} $   \\
$\scalebox{0.8}{$\tikzfig{MC30_2_sign}$}$ & $(-1)^{ 1+C_{0,0}^{3,2,1;0,0} + C_{0}^{2,1;1,1}+ \frac{4(4+1)}{2} } = (-1)^{1-2+3+10} =(-1)^{12}$  
\end{tabular}

\end{center}

\section{Compiling a coloured symmetric operad}\label{appcompile}

Given a coloured symmetric operad $\Oo$ enriched over graded $k$-modules, with set of colours $A$. Let $V$ be a $\Oo$-algebra and $\psi: A \longrightarrow \Z$ a function assigning to each colour an integer. 

In this appendix, we totalize the collection $V$ into a single graded $k$-module $\prod_\psi V$ where elements of $V(a)$ are placed in additional degree $\psi(a)$. Next, we compile $\Oo$ into an (uncoloured) symmetric graded linear operad $\prod_\psi \Oo$ (Proposition \ref{operadstructure}) exactly such that the $\Oo$-algebra structure on $V$ lifts to a $\prod_\psi \Oo$-algebra structure on $\prod_\psi V$ (Corollary \ref{algebralift}). Note that the main intricacy lies in a coherent choice of signs. As we work with the product, this is only well defined if some finiteness (or convergence) property is satisfied (Lemma \ref{compilatenwelldefined}). 

\subsubsection{$z$-Suspension of chain complexes}

Let $C$ be a chain complex of $k$-modules and $z\in \Z$ an integer, then the $z$-suspension is defined as 
$$ s^z C := (ks)^{\otimes z} \otimes C$$
where $s$ is a formal element concentrated in degree $1$.

We have the following properties
\begin{itemize}
\item the differential $s^z C$ alters by $(-1)^z$, i.e. $d_{s^z C}= (-1)^z d_C$,
\item we have an isomorphism of $k$-modules $(s^z C)_n \cong C_{n-z}$,
\item an element in $C$ of degree $n$ has degree $n+z$ in $s^z C$,
\item we have two canonical maps $C \longrightarrow s^zC$ and $s^{z} C \longrightarrow C$ of respective degrees $z$ and $-z$.
\end{itemize}
Note that $z=1$ and $z=-1$ are respectively the suspension and desuspension.

We make the following choice of identification
$$ s\inv  s= Id_V = - s s\inv $$
This is a consequence of the Koszul sign involved in
$$\circ ( s \otimes s \inv ) = - \circ( s \inv \otimes s).$$
As a result, we have 
$$ s^{-z} \circ s^{z} = (-1)^{\frac{z(z-1)}{2}} (s\inv s)^z =  (-1)^{\frac{z(z-1)}{2}} Id_V$$

\subsection{Totalization of a collection}\label{subapptotcoll}

\noindent \emph{Notations.}
Given a set $A$, we abbreviate a sequence $\nth{a}$ as $\underline{a}$.
We have an operadic composition of sequences: $\nth{a} \circ_i \fromto{a'}{m} := a_1,\ldots,\fromto{a'}{m},\ldots,a_n$ replacing $a_i$ by the sequence $\fromto{a'}{m}$. We abbreviate by writing $\underline{a} \circ_i \underline{a'}$. 

\medskip 

We construct a graded $k$-module by shifting the collection $k$-modules along $\psi$
$$\prod_\psi V := \prod_{a \in A} s^{\psi(a)} V(a)$$
Note in case that $V(a)$ are graded $k$-modules as well, then this is a product of graded $k$-modules, i.e. we only consider the $k$-linear finite sums of sequences of constant degree. For $v\in V(a)$, we denote $v^\psi := s^{\psi(a)}v$ and thus $|v^\psi| = \psi(a) + |v|$.

For an element $t \in \End(V)$ we have an induced map
$$
\begin{tikzcd}
s^{\psi(a_1)}V(a_1) \otimes \ldots \otimes s^{\psi(a_n)}V(a_n) \arrow[d, "s^{-\psi(a_1)} \otimes \ldots \otimes s^{-\psi(a_n)}"'] \arrow[r, dotted] & s^{\psi(a)}V(a)                \\
V(a_1) \otimes \ldots \otimes V(a_n) \arrow[r, "t"']                                                                                                                 & V(a) \arrow[u, "s^{\psi(a)}"']
\end{tikzcd}$$
We define an element $t^\psi \in \End( \prod_\psi V )$ by adjusting with a sign $(-1)^{t^\psi}$
$$t^\psi := (-1)^{t^\psi} s^{\psi(a)} t \left(s^{-\psi\left(a_1\right)}  \otimes \ldots \otimes s^{-\psi\left(a_n\right)}\right)$$
where 
$$(-1)^{t^\psi} =  (-1)^{\frac{\psi(a)(\psi(a)-1)}{2} + \sum_{i=1}^n \psi(a_i) \sum_{j>i}\psi(a_j)}$$

\begin{lemma}\label{sign_tpsi}
For $t\in \End\left(V\right)\left(\nth{a};a\right)$ and $t'\in \End\left(V\right)\left(\fromto{a'}{n'};a_i\right)$, we have
$$(-1)^{t^\psi + t^{' \psi} + (t\circ_i t')^\psi} = (-1)^{ \psi(\underline{a'};a_i) \sum_{j=1,\neq i}^k \psi(a_j) + \frac{\psi(a_i)(\psi(a_i)-1)}{2}} $$
\end{lemma}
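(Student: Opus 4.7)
The plan is to unfold all three signs using the definition of $(-1)^{t^\psi}$ and exploit systematic cancellations modulo $2$. Abbreviate
$P = \sum_{j=1}^{n}\psi(a_j)\sum_{k>j}\psi(a_k)$,
$Q = \sum_{j=1}^{n'}\psi(a'_j)\sum_{k>j}\psi(a'_k)$,
and let $R$ denote the analogous double sum for the input sequence $(a_1,\ldots,a_{i-1},a'_1,\ldots,a'_{n'},a_{i+1},\ldots,a_n)$ of $t\circ_i t'$. The exponent of $(-1)^{t^\psi + t'^\psi + (t\circ_i t')^\psi}$ then equals $\tfrac{\psi(a)(\psi(a)-1)}{2} + P + \tfrac{\psi(a_i)(\psi(a_i)-1)}{2} + Q + \tfrac{\psi(a)(\psi(a)-1)}{2} + R$, and the two triangular terms in $\psi(a)$ cancel, reducing the claim to $P + Q + R \equiv \bigl(\psi(a_i) + \sum_{l}\psi(a'_l)\bigr)\sum_{j\neq i}\psi(a_j) \pmod 2$, which agrees with $\psi(\underline{a'};a_i)\sum_{j\neq i}\psi(a_j)$ mod $2$ once one recalls that $\psi$ is extended to arities by $\psi(\underline{a'};a_i) = \psi(a_i) - \sum_{l}\psi(a'_l)$.

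Next, I would split $R$ according to which of the three blocks $A_{-} = \{a_1,\ldots,a_{i-1}\}$, $A' = \{a'_1,\ldots,a'_{n'}\}$, $A_{+} = \{a_{i+1},\ldots,a_n\}$ the indices of an ordered pair $(j<k)$ lie in, producing six sub-sums. A parallel three-block decomposition of $P$ using $A_{-}$, $\{a_i\}$, $A_{+}$ produces five sub-sums. By direct inspection the three pieces of $R$ indexed by pairs with both entries in $A_{-}\cup A_{+}$ agree term-by-term with the analogous pieces of $P$, and so cancel in $P+R$ modulo $2$. The piece of $R$ collecting pairs entirely inside $A'$ is exactly $Q$, which cancels the $Q$ already contributed by $t'^\psi$.

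What survives are only the cross contributions joining $\{a_i\}$ (from $P$) or $A'$ (from $R$) with $A_{-}\cup A_{+}$. Adding them gives $\psi(a_i)\sum_{j\neq i}\psi(a_j) + \bigl(\sum_{l}\psi(a'_l)\bigr)\sum_{j\neq i}\psi(a_j) = \bigl(\psi(a_i)+\sum_{l}\psi(a'_l)\bigr)\sum_{j\neq i}\psi(a_j)$, which is the desired exponent modulo $2$. Together with the lone surviving triangular term $\tfrac{\psi(a_i)(\psi(a_i)-1)}{2}$ from $t'^\psi$ this yields the claimed identity. The argument is pure bookkeeping; the only delicate point is to make sure the three-by-three block decomposition of $P$ and $R$ is matched correctly before reducing modulo $2$, so that the ``pure $a$-block'' pairs really do appear identically on both sides.
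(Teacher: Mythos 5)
Your proof is correct and follows essentially the same route as the paper: both expand the defining exponents of the three signs, cancel the two $\tfrac{\psi(a)(\psi(a)-1)}{2}$ terms mod $2$, and match the double sums block by block (before $i$, the inserted block $\underline{a'}$, after $i$) so that only the cross terms $\bigl(\psi(a_i)+\sum_l\psi(a'_l)\bigr)\sum_{j\neq i}\psi(a_j)\equiv\psi(\underline{a'};a_i)\sum_{j\neq i}\psi(a_j)$ and the triangular term in $\psi(a_i)$ survive. The paper merely organizes the same bookkeeping as a direct expansion of $(-1)^{(t\circ_i t')^\psi}$ in which $t^\psi$ and $t'^\psi$ are recognized as subexpressions.
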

\begin{proof}
We compute 
\begin{align*}
(-1)^{(t\circ_i t')^\psi} &= (-1)^{ \frac{\psi(a)(\psi(a)-1)}{2}+  \sum_{j<i} \psi(a_j) (\sum_{s>j,\neq i} \psi(a_j) + \sum_{s=1}^{n'} \psi(a'_s) ) + \sum_{j=1}^{n'} \psi(a'_j) (\sum_{s>i}\psi(a_s)+ \sum_{s>j}\psi(a'_s))} \\
& (-1)^{  \sum_{j>i} \psi(a_j)\sum_{s>j} \psi(a_s) } \\
&= (-1)^{ t^{\psi} + \sum_{j<i} \psi(a_j) ( \psi(a_i) + \sum_{s=1}^{n'} \psi(a'_s)) + \psi(a_i)\sum_{s>i} \psi(a_s) + \sum_{j=1}^{n'} \psi(a'_j) \sum_{s>i}\psi(a_s) + t^{'\psi} + \frac{\psi(a_i)(\psi(a_i-1)}{2} } \\
&= (-1)^{ t^\psi + t^{'\psi} + \psi(\underline{a'};a_i) \sum_{j=1,\neq i}^k \psi(a_j) + \frac{\psi(a_i)(\psi(a_i)-1)}{2} }
\end{align*}
\end{proof}

 We compute the corresponding signs of the composition.
\begin{lemma}\label{compiledEnd}
For $t\in \End\left(V\right)\left(\nth{a};a\right)$ and $t'\in \End\left(V\right)\left(\fromto{a'}{n'};a_i\right)$, we obtain 
$$t^\psi \circ_i t^{'\psi} = \left(-1\right)^{ |t'| \sum_{j\neq i} \psi\left(a_j\right) + \psi\left(\underline{a'};a_i\right)\sum_{j<i}\psi\left(a_j\right)} (t \circ_i t')^\psi $$
Moreover, for $\si \in \Ss_n$, we have $\left(t^\psi\right)^\si = \left(-1\right)^{\si\left(\psi\left(\underline{a}\right)\right)} (t^\psi)^\si$.
\end{lemma}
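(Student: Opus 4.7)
My plan is to unwind the definition
$$t^\psi = (-1)^{t^\psi}\, s^{\psi(a)}\, t\, (s^{-\psi(a_1)}\otimes\cdots\otimes s^{-\psi(a_n)})$$
on both sides and track the Koszul signs carefully. For the composition $t^\psi \circ_i t'^\psi$, one substitutes the formula for $t'^\psi$ into the $i$-th slot of $t^\psi$ and then slides the various suspension operators past each other and past $t$, $t'$. The key observations are: (i) sliding $s^{-\psi(a_i)}$ past $s^{\psi(a_i)}$ costs $(-1)^{\psi(a_i)(\psi(a_i)-1)/2}$ by the convention $s^{-z}s^z = (-1)^{z(z-1)/2}\,\mathrm{Id}$; (ii) sliding $t'$ (of degree $|t'|$) past the operators $s^{-\psi(a_j)}$ for $j\neq i$ in the other slots contributes $(-1)^{|t'|\sum_{j\neq i}\psi(a_j)}$; (iii) sliding the tensor block $s^{\psi(a'_1)}\otimes\cdots\otimes s^{\psi(a'_{n'})}$ into its correct position past the suspension operators $s^{-\psi(a_j)}$ for $j<i$ contributes $(-1)^{\psi(\underline{a'};a_i)\sum_{j<i}\psi(a_j)}$.

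Collecting these signs and comparing with $(-1)^{(t\circ_i t')^\psi}(t\circ_i t')^\psi$, one sees that the residual sign to verify is exactly
$$(-1)^{t^\psi + t'^\psi + (t\circ_i t')^\psi + \psi(\underline{a'};a_i)\sum_{j=1,\,\neq i}^{n}\psi(a_j) + \tfrac{\psi(a_i)(\psi(a_i)-1)}{2}} = 1,$$
which is precisely the content of Lemma \ref{sign_tpsi}. So the composition identity follows immediately from that earlier lemma together with the bookkeeping above.

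For the equivariance statement, the symmetric group action on $\End(V)$ permutes the input slots; applying $\sigma$ to $t^\psi$ amounts to permuting the operators $s^{-\psi(a_1)}\otimes \cdots\otimes s^{-\psi(a_n)}$ together with the corresponding inputs. The Koszul sign $(-1)^{\sigma(\psi(\underline{a}))}$ arises exactly from permuting the degree-$\psi(a_j)$ formal symbols $s^{-\psi(a_j)}$, and all other signs match between the two sides; this gives the claimed formula (with the understanding that the right-hand side is $(t^\sigma)^\psi$, up to the displayed Koszul sign).

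The main obstacle is purely combinatorial: ensuring that the total sign contribution from all the suspension slides agrees precisely with the sign $(-1)^{(t\circ_i t')^\psi}$ prescribed by the definition of the compiled operad, which is why Lemma \ref{sign_tpsi} was set up in advance — the present lemma is essentially a repackaging of it.
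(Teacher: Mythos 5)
Your proposal is correct and follows essentially the same route as the paper's proof: unwind the definition of $t^\psi$ and $t'^{\psi}$, Koszul-slide the suspension operators (using the convention $s^{-z}s^{z}=(-1)^{z(z-1)/2}\mathrm{Id}$), and absorb the residual sign via Lemma \ref{sign_tpsi}; the equivariance part is likewise the same Koszul-sign computation, and you correctly read the right-hand side of the second identity as $(t^\si)^\psi$. No gaps.
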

\begin{proof}
First, we compute
\begin{align*}
&t^\psi \circ_i t^{'\psi} \\
&= s^{\psi\left(a\right)} \circ t \circ \left( s^{\psi\left(a_1\right)} \otimes \ldots \otimes s^{\psi\left(a_n\right)}\right) \circ \left(Id^{\otimes\left(i-1\right)} \otimes t^{'\psi} \otimes Id^{\otimes\left(n-i\right)}\right) (-1)^{\eps_1} \\
&=  s^{\psi\left(a\right)} \circ t \circ \left( s^{-\psi\left(a_1\right)} \otimes \ldots \otimes s^{-\psi\left(a_i\right)} \circ t^{'\psi} \otimes \ldots \otimes s^{-\psi\left(a_n\right)}\right) (-1)^{\eps_1+ \eps_2}   \\
&= s^{\psi\left(a\right)} \circ t \circ \left( s^{-\psi\left(a_1\right)} \otimes \ldots \otimes t' \circ \left(s^{-\psi\left(a'_1\right)} \otimes \ldots \otimes s^{-\psi\left(a'_{n'}\right)}\right) \otimes \ldots \otimes s^{-\psi\left(a_n\right)}\right)  (-1)^{\eps_1+\eps_2 + \eps_3} \\
&= s^{\psi\left(a\right)} \circ \left(t \circ_i t'\right) \circ \left( s^{-\psi\left(a_1\right)} \otimes \ldots \otimes s^{-\psi\left(a'_1\right)} \otimes \ldots \otimes s^{-\psi\left(a'_{n'}\right)} \otimes \ldots \otimes s^{-\psi\left(a_n\right)}\right) (-1)^{\eps_1+\eps_2+\eps_3+\eps_4}
\end{align*}
where 
\begin{align*}
\eps_1 &= t^\psi, \quad
\eps_2 =  |t^{'\psi}| \sum_{j>i}|\psi\left(a_j\right)|, \quad
\eps_3 =  t^{' \psi} + \frac{\psi(a_i)(\psi(a_i)-1)}{2},  \quad
\eps_4 = |t'| \sum_{j<i} |\psi\left(a_j\right)|
\end{align*}
Using lemma \ref{sign_tpsi}, we obtain that 
$$\eps_1+\eps_2+\eps_3+\eps_4 = (t\circ_i t')^\psi + |t^{'\psi}| \sum_{j>i}|\psi\left(a_j\right)| + |t'| \sum_{j<i} |\psi\left(a_j\right)| +  \psi(\underline{a'};a_i) \sum_{j=1,\neq i}^k \psi(a_j)$$
The result then follows as $|t^{'\psi}| = |t'|+ \psi(\underline{a'};a_i)$. 

For a permutation $\si \in \Ss_n$, we first note that 
$$(-1)^{ \sum_{i=1}^n \psi(a_{\si(i)}) \sum_{j>i} \psi(a_{\si(j)}) } = (-1)^{ \sum_{i=1}^n \psi(a_{i}) \sum_{j>i} \psi(a_{j}) }$$
and thus $(-1)^{ t^\psi } = (-1)^{ (t^{\si})^\psi }$. Hence, we can compute
\begin{align*}
\left( t^\psi \right)^\si &= s^{\psi\left(a\right)} \circ t \circ \left( s^{-\psi\left(a_1\right)} \otimes \ldots \otimes s^{-\psi\left(a_n\right)}\right) \circ \si^{-1} (-1)^{ t^{\psi}}\\
&= s^{\psi\left(a\right)} \circ t \circ \si^{-1} \circ  \left( s^{-\psi\left(\si\inv\left(a_1\right)\right)} \otimes \ldots \otimes s^{-\psi\left(\si\inv\left(a_n\right)\right)}\right) \left(-1\right)^{\si\left(\psi\left(\underline{a}\right)\right)+t^{\psi} }\\
&= \left(t^{\si}\right)^\psi\left(-1\right)^{\si\left(\psi\left(\underline{a}\right)\right)}
\end{align*}
\end{proof}

We also write the signs appearing in the application of an element $t\in \End(V)$ on concrete elements.
\begin{lemma}\label{signoperationexplicit}
For an operation $t\in \End(V)(\underline{a};a)$ and elements $v_i \in V(a_i)$, we obtain
$$t^\psi(v_1^\psi,\ldots,v_n^\psi) =(-1)^{ \frac{\psi(\underline{a};a)(\psi(\underline{a};a)+1)}{2} +  \psi(a) \psi(\underline{a};a)  + \sum_{i=1}^n|v_i^\psi| \sum_{j>i} \psi(a_j) } t(v_1,\ldots,v_n)^\psi$$
\end{lemma}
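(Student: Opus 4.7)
The plan is to unwind the definition of $t^\psi$ from \S\ref{subapptotcoll} applied to the element $v_1^\psi \otimes \cdots \otimes v_n^\psi = s^{\psi(a_1)} v_1 \otimes \cdots \otimes s^{\psi(a_n)} v_n$ and carefully collect the Koszul signs produced at each step. Unfolding yields
\[
t^\psi(v_1^\psi,\ldots,v_n^\psi) = (-1)^{\varepsilon_0}\, s^{\psi(a)} \circ t \circ (s^{-\psi(a_1)} \otimes \cdots \otimes s^{-\psi(a_n)})(s^{\psi(a_1)} v_1 \otimes \cdots \otimes s^{\psi(a_n)} v_n),
\]
where $\varepsilon_0 = \binom{\psi(a)}{2} + \sum_{i<j}\psi(a_i)\psi(a_j)$ is the sign already built into the definition of $t^\psi$.

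Distributing the tensor of desuspensions over the tensor of inputs via the Koszul rule $(f \otimes g)(x \otimes y) = (-1)^{|g||x|} f(x) \otimes g(y)$ produces a sign $\varepsilon_1 = \sum_{i<j}\psi(a_j)(\psi(a_i) + |v_i|)$ from commuting each $s^{-\psi(a_j)}$ past $s^{\psi(a_i)} v_i$ for $i < j$. Invoking the appendix's convention $s^{-z} \circ s^z = (-1)^{z(z-1)/2}\,\mathrm{Id}$, each pairing $s^{-\psi(a_i)} s^{\psi(a_i)} v_i$ simplifies to $(-1)^{\binom{\psi(a_i)}{2}} v_i$, contributing $\varepsilon_2 = \sum_i \binom{\psi(a_i)}{2}$. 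The remaining composition $s^{\psi(a)} \circ t$ applied to $v_1 \otimes \cdots \otimes v_n$ produces $t(v_1,\ldots,v_n)^\psi$, so the full sign is $\varepsilon_0 + \varepsilon_1 + \varepsilon_2 \pmod 2$. Substituting $|v_i| = |v_i^\psi| - \psi(a_i)$ isolates the expected contribution $\sum_i |v_i^\psi|\sum_{j>i}\psi(a_j)$ and, using the identity $\binom{S}{2} = \sum_i \binom{\psi(a_i)}{2} + \sum_{i<j}\psi(a_i)\psi(a_j)$ with $S := \sum_i \psi(a_i)$, collapses the purely $\psi$-dependent residue to $\binom{\psi(a)}{2} + \binom{S}{2}$ modulo $2$.

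The remaining task is a purely combinatorial identity modulo $2$, namely
\[
\binom{\psi(a)}{2} + \binom{S}{2} \equiv \frac{D(D+1)}{2} + \psi(a)\, D \pmod 2, \qquad D := \psi(\underline{a};a) = \psi(a) - S,
\]
which matches the sign asserted in the lemma. I would prove it by applying the classical identity $\binom{x+y}{2} = \binom{x}{2} + \binom{y}{2} + xy$ with $x = \psi(a)$ and $y = -S$, together with $\binom{-S}{2} = \binom{S+1}{2} = \binom{S}{2} + S$, to compute $\binom{D}{2}$ in terms of $\binom{\psi(a)}{2}$ and $\binom{S}{2}$. Comparing with $\tfrac{D(D+1)}{2} + \psi(a)D = \binom{D}{2} + D(\psi(a)+1)$, the residual discrepancy reduces to $\psi(a)(\psi(a)+1) \equiv 0 \pmod 2$, which holds because consecutive integers have even product. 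The bulk of the work is routine sign bookkeeping; the only conceptually significant point is that $D = \psi(a) - S$ is precisely the degree shift induced by the assignment $t \mapsto t^\psi$ on arities, so the mod-$2$ identity is the natural compatibility one expects.
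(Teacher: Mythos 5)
Your proof is correct and follows essentially the same route as the paper: unwind the definition of $t^\psi$ to collect the built-in sign of $t^\psi$, the Koszul sign from distributing the desuspensions over the $v_i^\psi$, and the signs $\binom{\psi(a_i)}{2}$ from $s^{-z}s^z$, then merge the purely $\psi$-dependent part via the identity $\binom{k+l}{2}=\binom{k}{2}+\binom{l}{2}+kl$. Your explicit mod-$2$ verification of the final identity $\binom{\psi(a)}{2}+\binom{S}{2}\equiv\frac{D(D+1)}{2}+\psi(a)D$ is a welcome spelling-out of a step the paper merely asserts, but it is not a different argument.
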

\begin{proof}
We first write out the definition 
$$ t^\psi(v_1^\psi,\ldots,v_n^\psi) := (-1)^{ t^\psi + \sum_{i=1}^n \frac{\psi(a_i)(\psi(a_i)-1)}{2} + \sum_{i=1}^n|v_i^\psi| \sum_{j>i} \psi(a_j) } t(v_1,\ldots,v_n)^\psi $$
Using that $(-1)^{\frac{k(k-1)}{2}+ \frac{l(l-1)}{2}} = (-1)^{ \frac{(k+l)(k+l-1)}{2} + kl}$, we obtain
$$(-1)^{  \sum_{i=1}^n \frac{\psi(a_i)(\psi(a_i)-1)}{2} } = (-1)^{ \frac{(\sum_{i=1}^n \psi(a_i))(\sum_{i=1}^n \psi(a_i) -1)}{2} + \sum_{i=1}^n \psi(a_i) \sum_{j>i} \psi(a_i)}$$
Hence, we have
$$(-1)^{ t_\psi + \sum_{i=1}^n \frac{\psi(a_i)(\psi(a_i)-1)}{2}} = (-1)^{ \frac{\psi(a)(\psi(a)-1)}{2} + \frac{(\sum_{i=1}^n \psi(a_i))(\sum_{i=1}^n \psi(a_i)-1)}{2} } = (-1)^{ \frac{\psi(\underline{a};a)(\psi(\underline{a};a)+1)}{2} +  \psi(a) \psi(\underline{a};a)}.$$
\end{proof}


\subsection{Totalized operad}\label{subapptotop}
Given a graded $k$-linear coloured operad $\Oo$ with set of colours $A$ and a function $\psi: A \longrightarrow \Z$ associating to every color an integer. For a sequence of colors $(\nth{a};a)$ we set $\psi(\underline{a};a):= \psi(a) - \sum_{i=1}^n \psi(a_i)$. Note that for this definition we have that 
\begin{equation} \label{additive_psi}
\psi(\underline{a};a) + \psi(\underline{a'};a_i) = \psi(\underline{a} \circ_i \underline{a'} ; a)\end{equation}
We compile $\Oo$ into a graded uncoloured operad shifted by $\psi$ by defining
$$\left(\prod_\psi\Oo\right)(n):= \prod_{\underline{a};a} s^{\psi(\underline{a};a)}\Oo(\underline{a};a)$$ 
Note that it is a product of graded $k$-modules and thus we only consider the $k$-linear finite sums of sequences $(x_{\underline{a};a})_{\underline{a};a}$ of constant degree. For $x \in \Oo(\underline{a};a)$, we write $x^\psi:= s^{\psi(\underline{a};a)} x$ for the respective element in $\left(\prod_\psi \Oo\right)(n)$ concentrated in index $(\underline{a};a)$ and degree $\psi(\underline{a};a)+ |x|$.

In order to lift an $\Oo$-algebra structure on a collection $V$ to some $\prod_\psi \Oo$-algebra structure on the graded $k$-module $\prod_\psi V$, we need to define a graded operad structure on $\prod_\psi \Oo$ with suitable signs. Following lemma \ref{compiledEnd}, we obtain the following proposition.
\begin{prop}\label{operadstructure}
$\prod_\psi\Oo$ has the structure of a graded operad defined as 
\begin{itemize}
\item \underline{composition}: for $x= \left(x_{\underline{a};a}^\psi\right)$ and $y = \left(y_{\underline{a};a}^\psi\right)$ its composition $x \circ_i y$ in $\prod_\psi\Oo$ takes for index $(\underline{c};c)$ the value
$$ \sum_{ \underline{a} \circ_i \underline{a'} = \underline{c} } (-1)^{|y| \sum_{j\neq i} \psi(a_j) + \psi(\underline{a'};a_i) \sum_{j<i}\psi(a_j) } (x_{\underline{a};c} \circ_i x_{\underline{a'};a_i})^\psi  $$  
\item \underline{$\Ss$-action}: $\left(x_{\underline{a};a}\right)_{\underline{a};a}^{\sigma}  =  \left((x_{\underline{a};a}^\si)^\psi (-1)^{\si\left(\psi\left(\underline{a}\right)\right)}\right)_{\underline{a};a}$ where the sign is the Koszul sign for permuting elements of degree $\psi(a_i)$. 
\item \underline{unit}: the unit $1$ is zero except for index $a;a$ for which it is $1_a$.
\end{itemize}
\end{prop}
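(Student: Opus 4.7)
The plan is to verify directly that the formulas in the statement make $\prod_\psi \Oo$ into a graded symmetric operad. The axioms to check are sequential associativity, parallel associativity, equivariance, and the unit axioms; in each case the underlying operadic identity in $\Oo$ is already given, so the only content is that the signs balance correctly.

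First I would handle sequential associativity: for composable elements $x\in \Oo(\underline{a};a)$, $y\in \Oo(\underline{a'};a_i)$, $z\in \Oo(\underline{a''};a'_j)$, I would expand both $(x^\psi \circ_i y^\psi) \circ_{i+j-1} z^\psi$ and $x^\psi \circ_i (y^\psi \circ_j z^\psi)$ using the composition formula. Using associativity in $\Oo$, the underlying operadic elements agree, so the task reduces to comparing the two accumulated signs. The sign collected on the left-hand side involves $\psi(\underline{a'};a_i)$, $\psi(\underline{a''};a'_j)$ and their interactions with the entries of $\underline{a}$ and $\underline{a'}$; applying the additivity identity \eqref{additive_psi} first to $\underline{a'} \circ_j \underline{a''}$ and then to $\underline{a} \circ_i (\underline{a'}\circ_j \underline{a''})$, the accumulated signs on both sides coincide.

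Parallel associativity (composing at two distinct inputs of $x$) is checked in the same way; here the bookkeeping is easier because the two inserted sequences do not interact with each other. For equivariance, the Koszul sign $(-1)^{\sigma(\psi(\underline{a}))}$ in the $\Ss$-action is exactly what is needed so that the composition formula commutes with the symmetric group actions; this follows from standard Koszul sign manipulations, treating the colour-$a_i$ inputs as carrying degree $\psi(a_i)$. The unit axiom is immediate: since $\psi(a;a) = 0$, the shift applied to the colour-$a$ unit vanishes, $1_a^\psi = 1_a$ has degree $0$, and composition with the unit behaves on the nose.

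The main obstacle is the sign bookkeeping in sequential associativity; the key identity that makes everything close up is \eqref{additive_psi}, ensuring that the $\psi(\underline{a};a)$ distribute additively across iterated compositions so that the inner Koszul signs line up on both sides. As a sanity check, Lemma \ref{compiledEnd} already established these very sign formulas in the universal case $\Oo = \End(V)$, so once the axioms are verified abstractly, the lifting statement (Corollary \ref{algebralift}) follows automatically by applying $\prod_\psi$ to the structure morphism $\Oo \to \End(V)$ of a given $\Oo$-algebra.
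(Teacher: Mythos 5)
Your proposal is correct and follows essentially the same route as the paper: direct verification of sequential and parallel associativity by expanding both sides and matching signs, with the additivity identity \eqref{additive_psi} doing the decisive work in the sequential case, and the unit and equivariance checks being routine. One small caveat: in the parallel case the two inserted elements \emph{do} interact, via the Koszul sign $(-1)^{|y^{\psi}||z^{\psi}|}$ arising from swapping the order of insertion, and the verification must match exactly the cross-terms $|y|\psi(\underline{a''};a_k) + |z|\psi(\underline{a'};a_i) + \psi(\underline{a''};a_k)\psi(\underline{a'};a_i)$ on top of the underlying $(-1)^{|y||z|}$ --- a point of phrasing rather than a gap.
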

\begin{proof}
First we note that the composition has degree $0$ due to \eqref{additive_psi}. 

Next, we verify the two operad axioms. For $x\in \Oo(\underline{a};a), y \in \Oo(\underline{a'};a_i)$ and $z \in \Oo(\underline{a''};a'_j)$ we have
\begin{align*}
(x^\psi \circ_i y^\psi) \circ_{i-1+j} z^\psi = &(-1)^{  \psi(\underline{a'};a_i) \sum_{s<i}\psi(a_s) + \psi(\underline{a''};a_j')(\sum_{s<j} \psi(a_s') + \sum_{s<i} \psi(a_s))}\\
&  (-1)^{ |y|\sum_{s\neq i} \psi(a_s) + |z|(\sum_{s\neq i}\psi(a_s) + \sum_{s \neq j} \psi(a'_s))} ((x\circ_i y) \circ_{i-1+j} z)^\psi 
\end{align*}
and 
\begin{align*}
x^\psi \circ_i (y^\psi \circ_j z^\psi) = &(-1)^{  \psi(\underline{a''};a_j') \sum_{s<j} \psi(a_j') + (\psi(\underline{a''};a_j') + \psi(\underline{a'};a_i)) \sum_{s<i}\psi(a_s)  } \\
& (-1)^{ |y\circ_j z|\sum_{s\neq i} \psi(a_s) + |z| \sum_{s \neq j} \psi(a_s') } (x\circ_i (y\circ_j z))^\psi
\end{align*}
As $((x\circ_i y) \circ_{i-1+j} z)^\psi  = (x\circ_i (y\circ_j z))^\psi$, it suffices to verify that the above signs are equal. They are equal as $\psi(\underline{a'};a_i) + \psi(\underline{a''};a'_j) = \psi(\underline{a'} \circ_j \underline{a''} ;a_i)$. 

For the second axiom we compute for $x\in \Oo(\underline{a};a), y \in \Oo(\underline{a'};a_i)$ and $z \in \Oo(\underline{a''};a_k)$ that
\begin{align*}
(x^\psi \circ_i y^\psi) \circ_{k-1+n'} z^\psi = & (-1)^{ \psi(\underline{a''};a_k) (\sum_{s<k,\neq i} \psi(a_s) + \sum_{s=1}^{n'} \psi(a'_s))  + \psi(\underline{a'};a_i) \sum_{s<i} \psi(a_s) } \\
& (-1)^{ |y|\sum_{s \neq i} \psi(a_s) + |z| (\sum_{s \neq i,k} \psi(a_s) + \sum_{s=1}^{n'} \psi(a'_s))} ((x\circ_i y) \circ_{k-1+n'} z)^\psi
\end{align*}
and 
\begin{align*}
(x^\psi \circ_k z^\psi) \circ_i y^\psi =& (-1)^{ \psi(\underline{a'};a_i)  \sum_{s<i} \psi(a_s) + \psi( \underline{a''} ; a_k) \sum_{s<k} \psi(a_s) } \\
&(-1)^{ |z| \sum_{s \neq k} \psi(a_s) + |y| (\sum_{s\neq i,k} \psi(a_s) +  \sum_{s=1}^{n''} \psi(a_s'')) } ((x\circ_k z) \circ_i y)^\psi
\end{align*}
As $((x\circ_i y) \circ_{k-1+n'} z)^\psi = (-1)^{|y||z|} ((x\circ_k z) \circ_i y)^\psi$, it suffices to verify that the above signs are equal up to $(-1)^{|y| \psi(\underline{a''};a_k) + |z| \psi(\underline{a'};a_i) +  \psi(\underline{a''};a_k)\psi(\underline{a'};a_i)}$. This is a simple computation.

Verifying the unit is trivial and it is moreover clear that it is equivariant with respect to the defined symmetric action.
\end{proof}

\subsection{Totalised algebra}\label{subapptotalg}

Given $V$ an algebra over $\Oo$ with algebra map 
$$f: \Oo \longrightarrow \End(V)$$
We define an induced map of graded operads 
$$f^\psi: \prod_\psi \Oo \longrightarrow \End(\prod_\psi V)$$
by setting
$$f^\psi(x^\psi) := f(x)^\psi$$
for $x \in \Oo$. This defines the map locally, however, working over the product, we have to take into account a finiteness condition. Note however that if we replace $\prod$ by $\bigoplus$, this is always satisfied. 

\begin{lemma}\label{compilatenwelldefined}
Given
\begin{itemize}
\item  a natural number $n \in \N$,
\item an element $x=(x_{\underline{a};a}^\psi)_{\underline{a;a}} \in \prod_\psi \Oo$ of constant degree,
\item  elements $v_1 = (v_{1,a}^\psi)_a,\ldots,v_n = (v_{n,a}^\psi)_a$ in $\prod_\psi V$ of constant degree,
\item and a colour $b \in A$.
\end{itemize}
 If the sum
$$\sum_{\underline{a};b} f(x)^\psi(v_{1,a_1}^\psi, \ldots,v_{n,a_n}^\psi)$$
is finite, then $f^\psi$ is well-defined.
\end{lemma}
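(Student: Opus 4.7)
The plan is to unfold the candidate definition of $f^\psi$ and identify precisely what obstructs well-definedness; the finiteness hypothesis is then seen to be exactly the condition needed to remove this obstruction.

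First, I would spell out the candidate formula. Given $x = (x_{\underline{a};a}^\psi)_{\underline{a};a} \in \prod_\psi \Oo$ of constant degree and $v_i = (v_{i,a}^\psi)_a \in \prod_\psi V$ of constant degree for $i = 1,\ldots,n$, the only natural definition componentwise is
$$
f^\psi(x)(v_1,\ldots,v_n)_b \;:=\; \sum_{\underline{a}\,:\,(\underline{a};b)\in A^{n+1}} f(x_{\underline{a};b})^\psi\bigl(v_{1,a_1}^\psi,\ldots,v_{n,a_n}^\psi\bigr) \quad \in \; s^{\psi(b)}V(b),
$$
for each output colour $b \in A$. The two things to verify are (i) that the right-hand side is a legitimate element of $s^{\psi(b)}V(b)$, and (ii) that the resulting assignment respects the operadic structure.

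For (i), I would observe that constancy of degrees forces every non-zero summand above to land in a single homogeneous degree, namely the sum of the degree of $x$ and the degrees of the $v_i$'s. Since $\prod_\psi V$ is a product of graded modules (so its elements are $k$-linear \emph{finite} sums of homogeneous sequences), the only way the displayed formula could fail to yield a well-defined element is if infinitely many terms indexed by $\underline{a}$ with fixed output $b$ were non-zero. The hypothesis of the lemma is precisely that this does not happen. Hence $f^\psi(x)(v_1,\ldots,v_n)$ defines a genuine element of $\prod_\psi V$, and $f^\psi(x)$ is a well-defined element of $\End(\prod_\psi V)(n)$.

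For (ii), I would appeal directly to the work already done. The composition, $\Ss$-action, and unit on $\prod_\psi \Oo$ were defined in Proposition \ref{operadstructure} with signs chosen so as to mirror the formulas in Lemma \ref{compiledEnd} for $\End$ of a totalised collection. Thus, once well-definedness on elements is granted, the fact that $f^\psi$ intertwines compositions, symmetries, and units reduces to applying the morphism property of $f: \Oo \to \End(V)$ componentwise, index by index $(\underline{a};a)$, and invoking Lemma \ref{compiledEnd}. The main obstacle — and the only real content of the lemma — is thus the finiteness issue addressed in step (i); the rest is a routine bookkeeping of the signs whose coherence was already established. Finally, as a consequence, $f^\psi$ equips $\prod_\psi V$ with a $\prod_\psi \Oo$-algebra structure (as stated in Corollary \ref{algebralift}).
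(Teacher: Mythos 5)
Your step (i) is exactly the paper's argument: the proof given there is simply ``immediate from the definition of $f^\psi$'', and your unfolding of the componentwise formula and the observation that the finiteness hypothesis is precisely what makes each output component a legitimate element of $s^{\psi(b)}V(b)$ is the intended content. Note that your step (ii), the compatibility with composition, symmetries and units, is not part of this lemma but is the content of Corollary \ref{algebralift}, so including it here is harmless but superfluous.
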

\begin{proof}
This is immediate from the definition of $f^\psi$.
\end{proof}

\begin{cor}\label{algebralift}
Let $f:\Oo \longrightarrow \End(V)$ be a $\Oo$-algebra-structure on $V$ and $f^\psi$ well-defined, then $f^\psi$ is a morphism of graded operads.
\end{cor}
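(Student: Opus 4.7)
The plan is to show this corollary is essentially a formal consequence of the sign compatibility already established in Lemma \ref{compiledEnd} and the definition of the operad structure on $\prod_\psi \Oo$ in Proposition \ref{operadstructure}. Well-definedness on elements concentrated in a single colour is immediate from the definition $f^\psi(x^\psi) := f(x)^\psi$, and the hypothesis together with Lemma \ref{compilatenwelldefined} ensures that this extends to a well-defined $k$-linear map on the products.

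What remains is to verify the three properties of a morphism of graded operads: compatibility with composition, with the $\Ss$-action, and with units. For composition, I would pick homogeneous $x \in \Oo(\underline{a};a)$ and $y \in \Oo(\underline{a'};a_i)$ and compute both sides: on the one hand, by the composition formula of Proposition \ref{operadstructure},
\[
f^\psi(x^\psi \circ_i y^\psi) = (-1)^{|y|\sum_{j\neq i}\psi(a_j) + \psi(\underline{a'};a_i)\sum_{j<i}\psi(a_j)} f(x\circ_i y)^\psi,
\]
and since $f$ is an operad morphism, $f(x\circ_i y) = f(x)\circ_i f(y)$. On the other hand, by Lemma \ref{compiledEnd} applied to $t = f(x)$ and $t' = f(y)$,
\[
f^\psi(x^\psi) \circ_i f^\psi(y^\psi) = f(x)^\psi \circ_i f(y)^\psi = (-1)^{|f(y)|\sum_{j\neq i}\psi(a_j) + \psi(\underline{a'};a_i)\sum_{j<i}\psi(a_j)} (f(x)\circ_i f(y))^\psi.
\]
Since $|f(y)| = |y|$, the two signs match and the two expressions agree. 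By $k$-linearity and the finiteness hypothesis, this extends to arbitrary elements of $\prod_\psi \Oo$.

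Equivariance is analogous: both the formula $(x^\psi)^\sigma = (-1)^{\sigma(\psi(\underline{a}))}(x^\sigma)^\psi$ from Proposition \ref{operadstructure} and the formula $(t^\psi)^\sigma = (-1)^{\sigma(\psi(\underline{a}))}(t^\sigma)^\psi$ from Lemma \ref{compiledEnd} carry the \emph{same} sign, so $f^\psi((x^\psi)^\sigma) = (-1)^{\sigma(\psi(\underline{a}))} f(x^\sigma)^\psi = (-1)^{\sigma(\psi(\underline{a}))} (f(x)^\sigma)^\psi = (f(x)^\psi)^\sigma = (f^\psi(x^\psi))^\sigma$, using equivariance of $f$ in the middle. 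For units, at colour $a$ one has $\psi(a;a) = 0$, hence the unit $1_a^\psi$ of $\prod_\psi \Oo$ sits in degree $0$ with no suspension, and $f^\psi(1_a^\psi) = f(1_a)^\psi = \mathrm{Id}_{V(a)}^\psi$, which is precisely the identity component on $s^{\psi(a)}V(a)$ in $\End(\prod_\psi V)$.

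There is in fact no real obstacle to overcome here: the entire point of the sign conventions introduced in \S \ref{subapptotcoll}–\ref{subapptotop} is exactly this compatibility, and the only subtlety (which is the reason the hypothesis is explicitly assumed) is the convergence issue when passing from colour-wise equalities to the totalised product, which is dealt with by Lemma \ref{compilatenwelldefined}.
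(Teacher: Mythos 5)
Your proof is correct and follows essentially the same route as the paper: both sides reduce the verification to the observation that the signs in the composition formula of Proposition \ref{operadstructure} were chosen to coincide with those of Lemma \ref{compiledEnd} for $\End(\prod_\psi V)$, using that $f$ has degree $0$ so $|f(y)|=|y|$, with the equivariance and unit cases handled the same way. Your write-up is in fact slightly more complete than the paper's, which only spells out the composition case and dismisses the $\Ss$-action and unit as analogous or trivial.
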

\begin{proof}
As $f$ is of degree $0$, we have that $f^\psi$ also has degree $0$. As the signs of $\prod_\psi \Oo$ correspond to the signs in $\End(\prod_\psi V)$. The same holds for the symmetric action. We compute the composition as an example: for $x \in \Oo(\underline{a};a)$ and $x' \in \Oo(\underline{a'};a_i)$, we have
\begin{align*}
f^\psi (x^\psi) \circ_i f^\psi (x^{'\psi}) = f(x)^\psi \circ_i f(x')^\psi &= (-1)^{|f(x')| \sum_{j\neq i} \psi(a_j) + \psi(\underline{a'};a_i) \sum_{j<i} \psi(a_j)} (f(x) \circ_i f(x'))^\psi\\
&=  (-1)^{|f(x')| \sum_{j\neq i} \psi(a_j) + \psi(\underline{a'};a_i) \sum_{j<i} \psi(a_j)} (f(x\circ_i x'))^\psi \\
&=  (-1)^{|x'| \sum_{j\neq i} \psi(a_j) + \psi(\underline{a'};a_i) \sum_{j<i} \psi(a_j)} f^\psi(x\circ_i x')^\psi \\
&=  f^\psi (x^\psi \circ_i x^{'\psi}).
\end{align*}

\end{proof}

Following lemma \ref{signoperationexplicit}, for $x \in \Oo(\underline{a};a)$ and $v_i \in V(a_i)$, we have 
$$f^\psi (x^\psi)(v_1^\psi,\ldots,v_n^\psi) := (-1)^{ \frac{\psi(\underline{a};a)(\psi(\underline{a};a)+1)}{2} +  \psi(a) \psi(\underline{a};a) + \sum_{i=1}^n|v_i^\psi| \sum_{j>i} \psi(a_j) }  f(x)(v_1,\ldots,v_n)^\psi $$

\def\cprime{$'$}
\providecommand{\bysame}{\leavevmode\hbox to3em{\hrulefill}\thinspace}
\bibliography{Bibfile}
\bibliographystyle{amsplain}

\end{document}